\newtheorem*{maintheorem}{Theorem 5.3}
\newtheorem{theorem}{Theorem}[section]
\newtheorem{lemma}[theorem]{Lemma}
\newtheorem{cor}[theorem]{Corollary}
\newtheorem{prop}[theorem]{Proposition}
\theoremstyle{definition}
\numberwithin{equation}{section}
\begin{document}

\title[A Classification of Spanning Surfaces for Alternating Links]{A Classification of Spanning Surfaces for Alternating Links}
\date{\today}

\author[Colin Adams]{Colin Adams}
\address{Colin Adams,
Bronfman Science Center,
Williams College,
Williamstown, MA 01267}
\email{Colin.C.Adams@williams.edu}

\author[Thomas Kindred]{Thomas Kindred}
\address{Thomas Kindred,
3782 Olentangy Blvd.,
Columbus, OH 43214}
\email{thomas.kindred@gmail.com}

\begin {abstract}  A classification of spanning surfaces for alternating links is provided up to genus, orientability, and a new invariant that we call aggregate slope.  That is, given an alternating link, we determine all possible combinations of genus, orientability, and aggregate slope that a surface spanning that link can have.  To this end, we describe a straightforward algorithm, much like Seifert's Algorithm, through which to construct certain spanning surfaces called layered surfaces.  A particularly important subset of these will be what we call basic layered surfaces. We can alter these surface by performing the entirely local operations of adding handles and/or crosscaps, each of which increases genus. 

The main result then shows that if we are given an alternating projection $P(L)$ and a surface $S$ spanning $L$, we can construct a surface $T$ spanning $L$ with the same genus, orientability, and aggregate slope as $S$ that is a basic layered surface with respect to $P$, except perhaps at a collection of added crosscaps and/or handles.  Furthermore, $S$ must be connected if $L$ is non-splittable.

This result has several useful corollaries.  In particular, it  allows for the determination of nonorientable genus for alternating links. It also can be used to show that mutancy of alternating links preserves nonorientable genus. And it allows one to prove that there are knots that have a pair of minimal nonorientable genus spanning surfaces, one boundary-incompressible and one boundary-compressible.
\end{abstract}
\maketitle
\footnotetext[1]{2000 Mathematics Subject Classification 57M25, 57M50 Key 
words: spanning surface, alternating knot, crosscap number}
\section{Introduction}

A Seifert surface for an oriented knot or link  $L$ is a compact connected oriented surface in $S^3$ with oriented boundary equal to $L$. Seifert's Algorithm, wherein one takes an oriented projection of the link,  cuts each crossing open in the manner that preserves orientation, fills in each resulting circle with a disk, and connects the disks with a half-twisted band at each crossing,  ensures that such a surface exists. The minimal genus of a Seifert surface is defined to be the genus of the knot or link. Given a random link, it is typically difficult to determine its genus. However for one category of link, we have a simple means for finding the genus.

\begin {theorem}\label{Gabai}Given an oriented alternating link, Seifert's Algorithm applied to a reduced alternating projection yields a minimal genus Seifert surface.
\end{theorem}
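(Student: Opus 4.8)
The plan is to deduce the theorem from Gabai's theory of sutured manifolds, exploiting the fact that Murasugi sum (plumbing) of Seifert surfaces preserves minimality of Euler characteristic. Write $R$ for the surface produced by Seifert's algorithm from a reduced alternating projection $P(L)$, so that $\chi(R)=s-c$, where $s$ is the number of Seifert circles and $c$ the number of crossings of $P(L)$. What I want to show is that $R$ maximizes Euler characteristic among all Seifert surfaces for $L$; equivalently, that the complementary sutured manifold $(S^3\setminus N(R),\partial)$ is taut.

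First I would reduce to the special alternating case. The Seifert circles of $P(L)$ divide the projection sphere into regions, and, following Gabai, one decomposes $R$ as an iterated Murasugi sum whose summands are the Seifert surfaces $R_i$ of \emph{special} alternating projections $P_i$ — those in which every Seifert circle bounds a disk on one common side of the diagram, so that $R_i$ is isotopic to one of the two checkerboard surfaces of $P_i$. Because plumbing is a natural geometric operation (Gabai), $R$ is taut (hence genus-minimizing) if and only if each $R_i$ is, and similarly $R$ is a fiber surface if and only if each summand is; so it suffices to treat special alternating links.

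For a special alternating projection the surface $R_i$ is built from the disks of one checkerboard colour joined by half-twisted bands, all of the same sense. Here I would invoke the Crowell--Murasugi computation: the Seifert form of $R_i$ in the obvious disk-and-band basis is represented by a matrix $V$ whose combinatorial structure forces the Alexander polynomial $\det(t^{1/2}V-t^{-1/2}V^{\mathsf T})$ to have breadth exactly $1-\chi(R_i)$, because its extreme coefficient is (up to a unit) $\det V$, and the alternating/special hypothesis prevents the cancellation among terms that can occur for general diagrams. Since the breadth of the Alexander polynomial is a lower bound for $1-\chi$ of any Seifert surface for $L$, this shows $R_i$ is genus-minimizing. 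Alternatively, one follows Gabai's original route and constructs an explicit taut foliation of the complementary sutured manifold by disk decomposition, which also identifies exactly when $R_i$, and hence $R$, is a fiber surface.

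The main obstacle is precisely this last step in the special alternating case: showing that the extreme coefficient of the Alexander polynomial (equivalently, tautness of the sutured complement) does not vanish. This is where reducedness is essential — a nugatory crossing produces a cancelling pair and a superfluous band, so the conclusion genuinely fails without it — and where the alternating condition does the real work. The remaining ingredients (the Murasugi-sum decomposition of $R$ along Seifert circles, additivity/behavior of $\chi$ under plumbing, and the general Alexander-polynomial genus bound) are formal once this non-vanishing is established.
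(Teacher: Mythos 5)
Your outline is sound, but it takes a genuinely different route from the paper, which in fact does not reprove Theorem \ref{Gabai} directly: the authors cite the Murasugi, Crowell, and Gabai proofs and then recover the statement as the orientable special case of Theorem \ref{main}. Their mechanism is an induction on crossing number through Menasco's crossing-ball structure: Lemma \ref{lemma:desired} forces a crossing band on any meridianally essential spanning surface, cutting it drops the crossing count, and the induction shows every spanning surface matches a basic layered surface (up to added crosscaps/handles) in genus, orientability, and aggregate slope; a layered surface orientable with respect to the orientation a Seifert surface induces on $L$ must be the Seifert-algorithm surface, whence minimality. (Even the Gabai proof the paper summarizes in Section 2 --- two surfaces with disjoint interiors cut along parallel arcs at a crossing --- is the elementary 1986 argument, not the sutured-manifold one you invoke.) Your route --- decompose $R$ as an iterated Murasugi sum of Seifert surfaces of special alternating diagrams, apply Gabai's theorem that plumbing preserves minimality/tautness, and settle the special alternating base case via non-vanishing of the extreme coefficient of the Alexander polynomial or via disk decomposition --- is essentially Murasugi's original strategy upgraded by Gabai's Murasugi-sum machinery, and it works. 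It buys stronger by-products (tautness, fiberedness criteria, Thurston-norm minimality); the paper's approach buys uniformity, since the same induction treats nonorientable surfaces, slopes, and crosscap number, which the Alexander polynomial cannot see. Two points to tighten: state explicitly that the plumbing decomposition is taken along the Seifert circles having crossings on both sides, so that each summand really is special alternating; and note that the ``Crowell--Murasugi computation'' you cite is, in Crowell's form, already a direct proof of the full theorem for arbitrary reduced alternating diagrams, so your reduction earns its keep only if you supply the genuinely easier special-alternating argument (e.g., definiteness of the associated Goeritz form, or Gabai's explicit disk decompositions).
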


This theorem has been proven three times, first independently  in 1958 by Kunio Murasugi \cite{MK} and in 1959 by Richard Crowell \cite{CR} using the reduced Alexander Polynomial, and then in 1987 by David  
Gabai \cite{G} using elementary geometric techniques.   Our main theorem will include this theorem as a special case.

However, in this paper, we will also be concerned with nonorientable spanning surfaces. The \textbf{crosscap number} of a nonorientable surface $S$ of $n$ boundary components is defined to be  
$2 -\chi(S) - n$. Note that this is the number of projective planes of which the corresponding closed surface is the connected sum. In \cite{CB}, the \textbf{crosscap number of a link $L$} is defined to be the minimum crosscap number of any nonorientable surface spanning the link $L$. 

For the sake of comparison between complexities of orientable and nonorientable surfaces, define the \textbf{nonorientable genus} of $L$ to be 1/2 of the crosscap number of $L$. Note that this can be either an integer or a half integer, and does not coincide with traditional definitions. We will sometimes refer to the genus of a knot or link as the orientable genus if confusion may otherwise arise. 

The crosscap number is known only for a relatively small collection of knots and links. In   \cite{T}, it was determined for torus knots (see also \cite{MS}).  In \cite{HT}, a simple method for its determination was provided for two-bridge knots and links.  In \cite{IM}, it was determined for many pretzel knots.

In this paper, we present a classification of spanning surfaces for alternating links up to genus, orientability, and a new invariant that we call aggregate slope, and that for knots coincides with the slope.  That is, given an alternating link, we determine all possible combinations of genus, orientability, and aggregate slope that a surface spanning that link can have.  To this end, we describe a straightforward algorithm, much like Seifert's Algorithm, through which to construct spanning surfaces.  We call the surfaces generated by this algorithm \textbf{layered surfaces}.  A particularly important subset of these will be what we call \textbf{basic layered surfaces}.

Once we have a layered surface, we can also change the surface by performing the entirely local operations of adding handles and/or crosscaps, each of which increases genus. 

Our main result, which appears in Section 5,  is as follows:

\begin{maintheorem}
Given an alternating projection $P(L)$ and a surface $S$ spanning $L$, we can construct a surface $T$ spanning $L$ with the same genus, orientability, and aggregate slope as $S$ such that $T$ is a basic layered surface with respect to $P$, except perhaps at a collection of added crosscaps and/or handles.  When $S$ is orientable, $T$ can be chosen to be orientable with respect to the orientation that $L$ inherits from $S$.
\end{maintheorem}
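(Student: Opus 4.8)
The plan is to follow the geometric strategy pioneered by Gabai and by Menasco for alternating links: give $S$ a normal form relative to the diagram $P(L)$, and then read off a layered surface from the combinatorics of that normal form. First I would replace $P(L)$ by its crossing-ball structure: thicken the projection sphere $S^2$ to $S^2\times[-1,1]$, at each crossing insert a $3$-ball $B_i$ in which the two strands run as an over-arc in the upper hemisphere and an under-arc in the lower hemisphere, and arrange $L$ to lie on $S^2$ away from the $B_i$. Writing $B_+$ and $B_-$ for the two complementary balls of $S^2$ in $S^3$, I would isotope $S$ into standard position: $S$ meets each crossing ball $B_i$ in a finite collection of saddle disks, $S$ meets $S^2$ in a finite collection of simple closed curves and simple arcs (with endpoints on $L$ or on saddle boundaries), and $S\cap\overline{B_\pm}$ is a properly embedded surface in a ball. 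General position together with an innermost-disk argument makes this achievable.

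Next I would normalize the intersection pattern. Innermost circles of $S\cap S^2$ that are trivial on the spotted sphere bound disks in $S^2$ away from $P$; isotope or compress $S$ across them. A circle that bounds a disk in $S$ but is essential on the spotted sphere is handled by surgery, which either splits off a closed component (discarded, and impossible when $L$ is non-splittable, which yields the connectedness clause) or exhibits the compressing tube as a handle to be reattached at the end. Outermost-arc arguments similarly remove arcs of $S\cap S^2$ that cut off a trivial bigon or that are boundary-parallel in a ball-piece, each such move being an isotopy or a boundary-compression whose inverse is the addition of a crosscap or a handle. After finitely many such moves — termination via a complexity on the pair consisting of $S\cap S^2$ and the saddles, together with reducedness of $P$ — every component of $S\cap\overline{B_\pm}$ is a disk, and the only compressions and boundary-compressions performed have been recorded as a bounded collection of handles and crosscaps.

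Now $S$, minus the recorded handles and crosscaps, is assembled from disks in $B_+$, disks in $B_-$, and saddles in the crossing balls, glued along arcs of $S\cap S^2$. Here the alternating hypothesis does the essential work: because at every crossing the over-strand lies entirely in $\overline{B_+}$ and the under-strand entirely in $\overline{B_-}$, a disk in $B_+$ can meet a crossing ball only along the over-arc side and a disk in $B_-$ only along the under-arc side, which rules out the clasp-type saddle configurations that a non-alternating diagram would permit and forces the disks to stack in the layered fashion of the algorithm described above. Matching this picture against the definition of a (basic) layered surface identifies $T$, up to the recorded crosscaps and handles, as exactly such a surface. When $S$ is orientable, the disk pieces inherit coherent orientations and the saddles become half-twisted bands compatible with them, so the resulting $T$ is orientable for the orientation $L$ inherits from $S$; this is the step that specializes to Theorem \ref{Gabai}.

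Finally I would check that the construction preserves genus, orientability, and aggregate slope. Euler characteristic is tracked move-by-move: isotopies preserve it, discarding a split $S^2$ changes neither $\chi$ nor the relevant genus, and each handle or crosscap addition changes $\chi$ by exactly the amount needed to keep (orientable or nonorientable) genus and orientability type in step with $S$. The genuinely delicate bookkeeping — and the step I expect to be the main obstacle — is the aggregate slope: one must verify that each compression along a curve of $S\cap S^2$, each boundary-compression, and each reinsertion of a handle or crosscap leaves the framing that $S$ induces on $L$ unchanged, which comes down to a linking-number computation showing that every modification is either supported away from a neighborhood of $L$ or is slope-neutral there. Establishing this invariance, and simultaneously arranging that the normalization terminates, are the two points requiring the most care; the remainder is a standard, if lengthy, innermost-disk analysis.
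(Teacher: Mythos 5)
Your proposal correctly identifies the right machinery (Menasco's crossing-ball structure, cleaning $S\cap S^2$ by innermost-disk/outermost-arc moves, recording compressions and boundary-compressions as handles and crosscaps to be reattached), but it has a genuine gap at its central step. You assert that once $S$ is in cleaned normal form, the alternating hypothesis ``forces the disks to stack in the layered fashion,'' so that the normalized surface simply \emph{is} a layered surface. This is precisely the hard part of the theorem, and it is not true as stated: a cleaned, meridianally essential surface in Menasco form is assembled from over-disks, under-disks, and \emph{saddles}, where a crossing ball may contain many saddles and the arcs of $S\cap S^2$ may run through arbitrary regions of the diagram rather than sitting parallel to crossings. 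Saddles are not half-twisted crossing bands (a layered surface in Menasco form has \emph{no} saddles), so there is no direct dictionary between the normal form and the output of the layered-surface algorithm. The paper instead proves a crossing-band lemma: by a counting argument --- the $2n+2r$ segments of $P$ cut along the re-alternating arcs parallel to crossings must each contain an odd number of intersection-arc endpoints, while the regions of the complement supply at most $2n+2r-4$ boundary segments that can carry such endpoints --- some intersection arc must be parallel to a crossing, which yields a single crossing band. One then cuts at that crossing and inducts on crossing number. Without this lemma (or a substitute for it) your argument assumes its conclusion at the key moment.

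Two further points you flag but do not resolve are handled in the paper by the same induction. The aggregate slope is tracked via the twist $\tau(S,P)=l(S,L)-(w(P)-2\,lk(L))$: cutting a crossing band changes $\tau$ by $\pm 1$ and $\chi$ by $+1$, each meridianal boundary-compression changes the slope by exactly $\pm 2$ and is cancelled by adding a crosscap of the appropriate sign, so the bookkeeping closes up. The base case (the trivial projection) is not free either: it requires knowing that a spanning surface for the unknot of slope $2k$ has $\chi=1-|k|-2p$, which the paper gets from Tsau's classification of incompressible surfaces in a solid torus. Finally, the orientability clause in case the reglued crossing band reverses orientation requires the connectedness of spanning surfaces of non-splittable alternating links (itself a corollary of the crossing-band lemma) to produce a M\"obius band; your coherent-orientation argument for the saddles does not address this case.
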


This result has several useful corollaries that appear in Section \ref{S:Implications}, including the already mentioned classification of spanning surfaces for alternating links.  Additionally, it allows us to easily find and construct a minimal nonorientable genus surface spanning a given alternating link.  Note, for example in \cite{Z}, that a substantial amount of work is necessary to show the crosscap number of the link $6_3^2$ is 3. Theorem \ref{main} and the algorithm given below make this an immediate conclusion. At the end of the paper, we include the nonorientable genus of all the alternating knots of nine or fewer crossings and alternating 2-component links of eight or fewer crossings. It would be straightforward to write a computer program to determine the nonorientable genus of all knots in the census of alternating knots through 22 crossings given in \cite{HTW},\cite{RFSI} and \cite{RFSII}.

In \cite{HT}, Hatcher and Thurston proved that a 2-bridge knot cannot have two minimal nonorientable genus  spanning surfaces, one boundary-incompressible and one boundary-compressible. They then asked whether or not this is true in general. We utilize the results of this paper to answer that question in the negative with a specific example.

In a recent paper (cf. \cite{CT}),Curtis and Taylor give a further corollary of Theorem \ref{main}, showing that the two checkerboard surfaces of a reduced alternating projection of an alternating knot must yield the maximum and minimum integral slopes for all essential boundary surfaces of the knot. This implies that these maximum and minimum integral slopes must be twice the number of positive crossings and the negative of twice the number of negative crossings in the projection respectively. Since all slopes are integral for Monesinos knots, this yields the maximum and minimum slopes for all surfaces in that case.

The layered surfaces that appear here have independently been considered previously in \cite{PPS} (see footnote on p.10 of the ArXiv version) and in \cite{O}, where they are called state surfaces and where a criterion is provided that proves certain such surfaces are essential.

\section{Background}

Standard definitions of reduced projections, flypes, positive and negative crossings, oriented links, incompressible and boundary-incompressible surfaces apply.  A surface that is  
incompressible \emph{and} boundary-incompressible is said to be \textbf{essential}. 

If a projection $P$ contains $p$ positive crossings and $n$ negative crossings,  
then the \textbf{writhe} $w(P)$ is ${p-n}$.  Writhe varies across projections of a given link and across orientations of a given projection, but is invariant across all reduced, alternating projections of an oriented link.

 We define the \textbf{aggregate linking number} $lk(L)$ to be the sum of the linking numbers of all pairs of link components.  We define the aggregate linking number of a knot to be zero.  Equivalently, the aggregate linking number can be calculated in the same way as writhe, but only counting crossings between \emph{distinct} link components, and halving at the end.   Linking number and aggregate linking number are invariant across all projections of a given oriented link, but may well vary across various orientations of a given link.
 
 A projection divides the projection surface $S^2$ into regions, each of whose boundary consists alternately of strands and crossings from $L$.  A region with $n$ crossings on its boundary will be called a \textbf{projection n-gon}. 

 A \textbf{spanning surface} for a link $L$ is a surface $S \subset S^3$ with boundary equal to $L$ such that $S \cap  
\partial N(L)$ consists of one $(p, 1)$-curve on each component of $ 
\partial N(L)$, where $p$ refers to the number of meridians.  We will not distinguish between the spanning surface $S$ as a subset of $S^3$ and the spanning surface as a subset of $S^3 \backslash N(L)$, calling both $S$.  Two spanning surfaces for $L$ are considered equivalent if one is ambient isotopic to the other in $S^3 \backslash N(L)$. We will assume that $S$ contains no closed components, so that every point in $S$ is connected to $L$ by a path in $S$.  

We need a special form of boundary-incompressibility for spanning surfaces.

A spanning surface $S$ in $S^3 \backslash N(L)$  is \textbf{meridianally boundary-compressible} if there exists a disk $D$ embedded in $S^3 \backslash N(L)$ such that $\partial D = \alpha \cup \beta$ where $D \cap S = \beta$ and $D \cap \partial N(L) = \alpha$ are both arcs, $\beta$ does not cut a disk off $S$, $\partial D \cap \partial S$ cuts $\partial S$ into two arcs $\phi_1$ and $\phi_2$,  and $\alpha \cup \phi_i$ is a meridian of the knot for at least one of $i = 1,2$. A spanning surface is said to be \textbf{meridianally boundary-incompressible} if no such disk exists. The spanning surface is said to be \textbf{meridianally essential} if it is incompressible and meridianally boundary-incompressible.

The \textbf{boundary slope} or just \textbf{slope} $l(S,\hat{L})$ of a  
spanning surface $S$ with respect to a particular oriented component  
$\hat{L}$ of the link it spans is the linking number of $\hat{L}$ with $S \cap \partial N(\hat{L})$, its parallel curve on $S$ (where $S\cap \partial N(\hat{L})$ inherits the orientation of $\hat{L}$, as we will henceforth assume it to do). The \textbf{aggregate slope} $l(S,L)$ of a surface $S$ spanning oriented link $L$ is the sum of the link components' individual slopes.  Note that slope and aggregate slope are independent of link orientation.

We define the \textbf{twist} of a spanning surface $S$  
relative to projection $P$ to be $\tau(S,P)=l(S,L) - (w(P)-2 lk(L))$, where $l(S,L)$ is the aggregate slope, $w(P)$ is the writhe, and $lk(L)$ is the aggregate linking number of $L$.

\begin{prop}

The twist $\tau(S,P)$ can be calculated by taking the projection of $L \cup(S \cap \partial N(L))$ and considering only those crossings that occur between edges of the annuli in $S \cap N(L)$.  If we have $p$ positive crossings of this type and $n$ negative crossings, then $\tau(S,P)=\frac{p-n}{2}$.

\end{prop}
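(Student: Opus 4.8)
The plan is to reduce the statement to a linking-number computation read directly off a diagram, exploiting the fact that $\tau(S,P)$ measures the discrepancy between the framing of $L$ coming from $S$ and the blackboard framing of $P$. First I would rewrite the two terms in $\tau(S,P)=l(S,L)-(w(P)-2\,lk(L))$. Sorting the crossings of $P$ into self-crossings of the individual components $\hat L_1,\dots,\hat L_k$ and crossings between distinct components, and using that the signed number of crossings between $\hat L_i$ and $\hat L_j$ is $2\,lk(\hat L_i,\hat L_j)$, gives $w(P)=\sum_i s_i+2\sum_{i<j}lk(\hat L_i,\hat L_j)=\sum_i s_i+2\,lk(L)$, where $s_i$ is the sum of the signs of the self-crossings of $\hat L_i$; hence $w(P)-2\,lk(L)=\sum_i s_i$. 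On the other side, by definition $l(S,L)=\sum_i l(S,\hat L_i)=\sum_i lk(\hat L_i,\hat L_i')$, writing $\hat L_i'=S\cap\partial N(\hat L_i)$ for the parallel curve on $S$, which inherits the orientation of $\hat L_i$. Thus $\tau(S,P)=\sum_i\big(lk(\hat L_i,\hat L_i')-s_i\big)$, and it suffices to show that each summand $lk(\hat L_i,\hat L_i')-s_i$ equals half the signed number of certain crossings along $\hat L_i$.

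Next I would fix a convenient picture of $L\cup(S\cap\partial N(L))$. Isotope $S$ so that each collar annulus $A_i=S\cap N(\hat L_i)$ projects to a thin band that runs alongside $\hat L_i$ and faithfully follows its over/under behaviour at every crossing of $P$, and so that this band is flat (parallel to the projection plane) except on finitely many small disks, in each of which it carries a single half-twist; the meridional winding of $\hat L_i'$ is thereby absorbed entirely into these half-twists. In the resulting diagram the crossings are of two kinds. At each crossing $c$ of $P$ the two bands involved cross as doubled strands, producing four mutually parallel sub-crossings — one of which is $c$ itself — all carrying the sign $\epsilon_c$ of $c$. At each half-twist of an annulus $A_i$ the two boundary edges $\hat L_i$ and $\hat L_i'$ of that annulus cross exactly once, with a well-defined sign. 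The crossings of this second kind are precisely the crossings ``between edges of the annuli in $S\cap N(L)$'' of the statement; let $p$ and $n$ be the numbers of positive and negative ones.

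Now I would count. For fixed $i$, $lk(\hat L_i,\hat L_i')$ is half the signed number of crossings in which one strand lies on $\hat L_i$ and the other on $\hat L_i'$. Among the doubled $P$-crossings, only the self-crossings of $\hat L_i$ contribute such crossings — each self-crossing $c$ of $\hat L_i$ contributes exactly two, both of sign $\epsilon_c$ — while a doubled crossing between $\hat L_i$ and some $\hat L_j$ with $j\neq i$ contributes none; this totals $2s_i$. Among the half-twist crossings, those on $A_i$ contribute exactly their signs, summing to some $\Sigma_i$ with $\sum_i\Sigma_i=p-n$. Hence $lk(\hat L_i,\hat L_i')=\tfrac12(2s_i+\Sigma_i)=s_i+\tfrac12\Sigma_i$, so $lk(\hat L_i,\hat L_i')-s_i=\tfrac12\Sigma_i$; summing over $i$ gives $\tau(S,P)=\tfrac12\sum_i\Sigma_i=\tfrac12(p-n)$, as desired.

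I expect the real work to be in the normal form and the bookkeeping rather than in any isolated idea. One must check that the collar $A_i$ can indeed be isotoped so that its image is a band following $P$ at every crossing with all of the ``extra'' framing concentrated in isolated half-twists, and one must be careful that the two sub-crossings of $\hat L_i$ with $\hat L_i'$ produced at a self-crossing of $P$ carry the sign $\epsilon_c$ and are kept out of the count of crossings ``between edges of the annuli'' — they are exactly the crossings that the $w(P)-2\,lk(L)$ term is there to absorb. Once the sign conventions for such a sub-crossing and for a half-twist are pinned down, the remainder is routine.
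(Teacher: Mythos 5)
Your proposal is correct and follows essentially the same route as the paper: isotope the collar annuli so that the framing half-twists are isolated from the crossings of $P$, then observe that each self-crossing of a component contributes exactly two crossings of sign $\epsilon_c$ between $\hat L_i$ and its parallel copy (so these account precisely for $w(P)-2\,lk(L)$), leaving the half-twist crossings to account for $\tau(S,P)=\frac{p-n}{2}$. The only difference is organizational — you do the bookkeeping component by component, while the paper classifies all crossings of $L\cup\hat L$ into seven types and sums globally — but the counts are identical.
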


\begin{proof}

First, notice that $S \cap N(L)$ is a collection of annuli.  Holding $L$ fixed, we can isotope each annulus in $S \cap N(L)$ so that in the projection of $L \cup(S \cap \partial N(L))$, any crossing between some component of $S \cap \partial N(L)$ and its respective boundary through an annulus in $S \cap N(L)$ \emph{does not} occur in a neighborhood of a crossing of $L$ in the projection.  The projection of $L \cup(S \cap \partial N(L))$ will then have the following seven types of crossings, where $L_i$ and $L_j$ denote any distinct pair of link components and $\hat{L_i}$ and $\hat{L_j}$ their respective components in $\hat{L}=S\cap \partial N(L)$:

(I) $L_i$ with itself

(II) $L_i$ with $L_j$

(III) $L_i$ with $\hat{L_i}$, such that the pre-image of the crossing is an arc connecting the two boundary components of the annulus $S \cap N(L_i)$

(IV) $L_i$ with $\hat{L_i}$ in a neighborhood of a crossing of $L$, rather than  
through $S \cap N(L_i)$

(V) $\hat{L_i}$ with itself, necessarily in the neighborhood of  a crossing of $L$.

(VI) $L_i$ with $\hat{L_j}$, necessarily in the neighborhood of  a crossing of $L$.

(VII) $\hat{L_i}$ with $\hat{L_j}$, necessarily in the neighborhood of  a crossing of $L$.

Notice, as in Figure \ref{Fi:crossing}, that the neighborhood of each Type I crossing will also contain exactly two Type IV crossings and one Type V crossing, each of the same type (positive or negative).  The same will also be true of Types II, VI, and VII, respectively.  

\begin{figure}
\begin{center}
	{\includegraphics[height=1.3in]{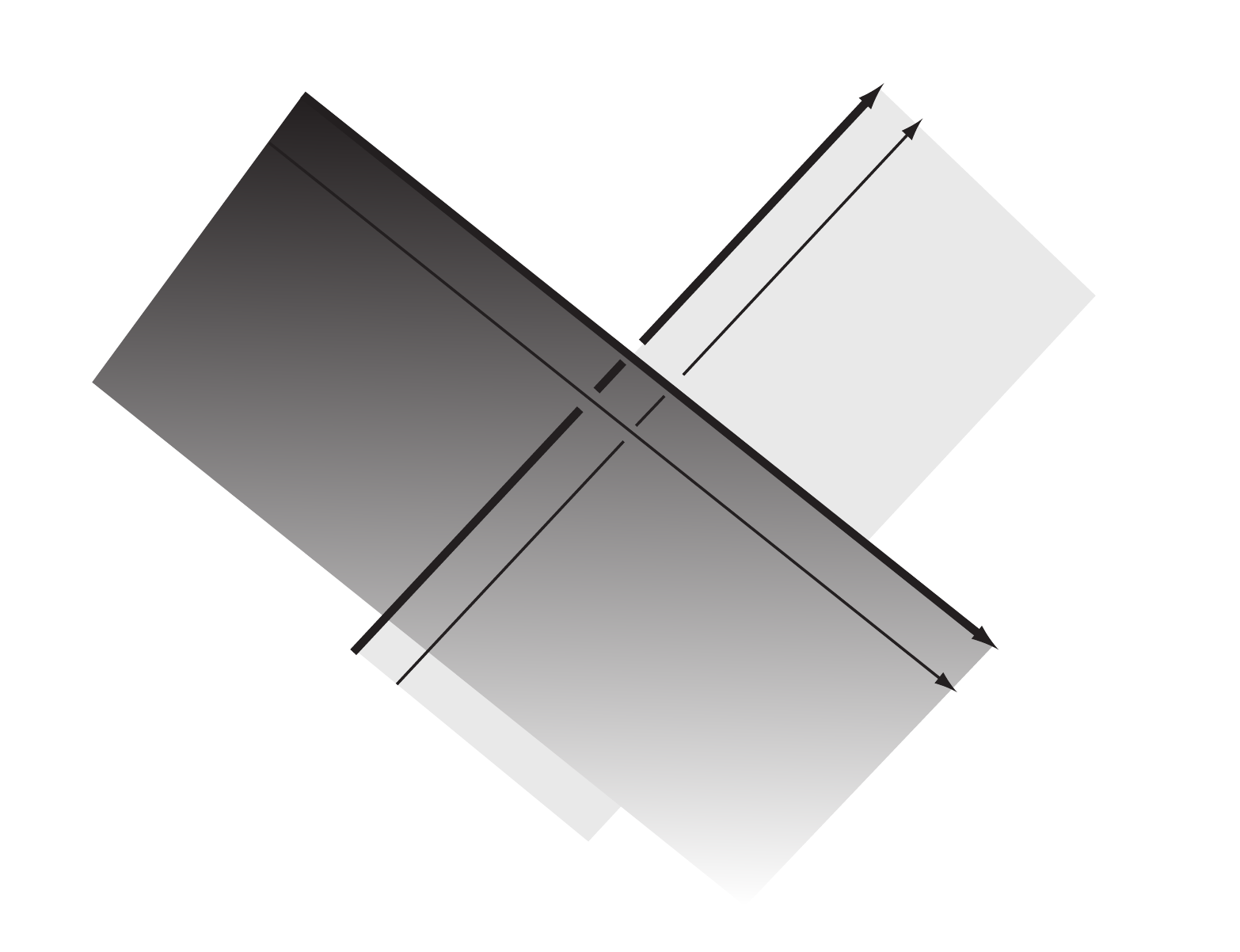}}
	\caption{The projected neighborhood of each Type I crossing, necessarily containing two Type IV crossings and one Type V crossing.}\label{Fi:crossing}
\end{center}
\end{figure} 

We then obtain the following expression for aggregate slope, where $I$ denotes the number of positive Type I crossings minus the number of negative ones, and similarly for $II$, $III$, and $IV$.

$$l(S,L)=\frac{III+IV}{2}=\frac{III}{2}+\frac{2I}{2}=\frac{III}{2}+((I+II)-2(\frac{II}{2}))=\frac{III}{2}+(w(P)-2lk(L))$$

Rearranging gives:

$$\frac{III}{2} = l(S,L)-(w(P)-2lk(L))=\tau(S,P)$$

as desired.
\end{proof}

We also use this classification of the seven types of crossings in $L \cup \hat{L}$ to prove the following.

\begin{prop}

Any spanning surface has an even aggregate slope.

\end{prop}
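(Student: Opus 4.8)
The plan is to reduce the parity of $l(S,L)$ to a mod-$2$ linking number computation, using the bookkeeping of crossing types set up in the proof of the previous proposition. Retain the notation $\hat L = S\cap\partial N(L)=\bigsqcup_i \hat L_i$, and regard $S$ as the truncated surface in $S^3\setminus N(L)$, so that $S$ is a compact surface with $\partial S=\hat L$ that is disjoint from each core $L_i$. Then $S$ is a mod-$2$ $2$-chain with boundary $\hat L$ missing $L_i$, so the mod-$2$ linking number $lk_2(L_i,\hat L)$ vanishes for every $i$. The point to stress is that this uses only that $\hat L$ \emph{as a whole} bounds such a chain: when $S$ is nonorientable the individual curves $\hat L_i$ need not be null-homologous over $\mathbb Z$, and the integral slopes $l(S,L_i)$ may be odd. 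It is precisely for this reason that the aggregate slope, and not the individual slopes, is the quantity controlled mod $2$.

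Next I would translate $lk_2(L_i,\hat L)$ into crossing data in the projection of $L\cup\hat L$. Reducing linking numbers mod $2$, $lk_2(L_i,\hat L)$ is congruent to the number of crossings at which a strand of $L_i$ passes over a strand of $\hat L$. Among the seven crossing types, the crossings between $L_i$ and $\hat L$ are exactly those of Types III and IV (where the $\hat L$-strand lies in $\hat L_i$) together with those of Type VI (where it lies in some $\hat L_j$, $j\ne i$). By definition the Types III and IV crossings between $L_i$ and $\hat L_i$ account for all crossings of $L_i$ over $\hat L_i$, whose number is congruent mod $2$ to $l(S,L_i)$. And each crossing of $L_i$ over $L_j$ in $P$ produces, in its neighborhood, exactly one Type VI crossing with $L_i$ over $\hat L_j$; hence, summed over $i$, the Type VI contribution is congruent mod $2$ to the total number of crossings of $P$ between distinct components.

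Finally I would assemble these facts. Summing $0=lk_2(L_i,\hat L)$ over $i$ gives
\[
\sum_i l(S,L_i)\ +\ \#\{\text{crossings of }P\text{ between distinct components}\}\ \equiv\ 0 \pmod 2 .
\]
But the image of any single component cuts the projection sphere into checkerboard regions, so any other component crosses that image an even number of times; thus the second term is even, and $l(S,L)=\sum_i l(S,L_i)$ is even, as claimed. Equivalently, one can stay inside the computation of the previous proposition, where $l(S,L)=\tfrac{III+IV}{2}$ with $IV=2I$, so that $l(S,L)=\tfrac{III}{2}+I$; the mod-$2$ linking computation above is exactly what pins down the parity of $\tfrac{III}{2}+I$.

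The only genuine subtlety I anticipate is the first step: invoking the vanishing of $lk_2(L_i,\hat L)$ correctly in the nonorientable case, i.e. recognizing that it is the aggregate boundary $\hat L$, and not the separate curves $\hat L_i$, that bounds a mod-$2$ chain disjoint from $L$. Once that is in hand, the remaining steps are routine crossing bookkeeping already prepared in the proof of the preceding proposition, together with the elementary fact that two components of a link diagram cross each other an even number of times.
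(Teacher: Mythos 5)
Your argument is correct, but it is a genuinely different proof from the one in the paper. The paper proceeds by induction on Euler characteristic: it cuts $S$ along an arc $\alpha$ whose removal leaves a connected spanning surface $S'$ of larger Euler characteristic, invokes the inductive hypothesis for $l(S',L')$, and then checks via the crossing-type bookkeeping (Types III and IV) that regluing the band changes the aggregate slope by an even amount. You instead give a direct, non-inductive homological argument: the truncated surface is a mod-$2$ $2$-chain in $S^3\setminus N(L)$ with boundary $\hat L$, so $lk_2(L_i,\hat L)=0$ for each core $L_i$; expanding this as $l(S,L_i)+\sum_{j\ne i}lk_2(L_i,L_j)$ and summing over $i$ kills the cross-terms (each appears twice, or equivalently the inter-component crossings of $P$ come in even numbers) and leaves $l(S,L)\equiv 0\pmod 2$. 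Your route buys a cleaner conceptual explanation --- the evenness is literally the vanishing of a mod-$2$ linking number, with no need to locate a non-separating arc or track sign changes under regluing --- and it isolates exactly why the \emph{aggregate} slope, rather than the individual slopes, is controlled in the nonorientable case. The paper's route stays entirely within the elementary crossing-count formalism it has just set up and avoids any appeal to homology with $\mathbb{Z}/2$ coefficients. One presentational note: your crossing-count middle step is really just a diagrammatic rendering of the identities $lk(L_i,\hat L_i)=l(S,L_i)$ and $lk(L_i,\hat L_j)=lk(L_i,L_j)$ for $j\ne i$ (since $\hat L_j$ is isotopic to $L_j$ inside $N(L_j)$, which misses $L_i$); stating it that way would let you bypass the seven-type case analysis entirely.
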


\begin{proof}

We proceed by induction on Euler Characteristic, and assume $S$ to be connected, spanning $L$ (we can easily generalize to disconnected surfaces).  If $\chi(S)=1$, $S$ is a disk, so aggregate slope is zero.  Assume the proposition is true for $\chi(S) > -n$.  Let $\chi(S)=-n$.  Find an arc $\alpha$ on $S$ such that $S'=S \backslash N(\alpha)$ is connected, spanning a link $L'$.  Since $\chi(S')=1-n$, it must have an even aggregate slope, by our inductive hypothesis.  Now, isotope $S$ and $L$ to shorten $\alpha$, changing the projection to give the picture in Figure \ref{Fi:band}.  Call the resulting projection of the link $P$. 

\begin{figure}
\begin{center}
	\scalebox{.5}{\includegraphics[height=1.3in]{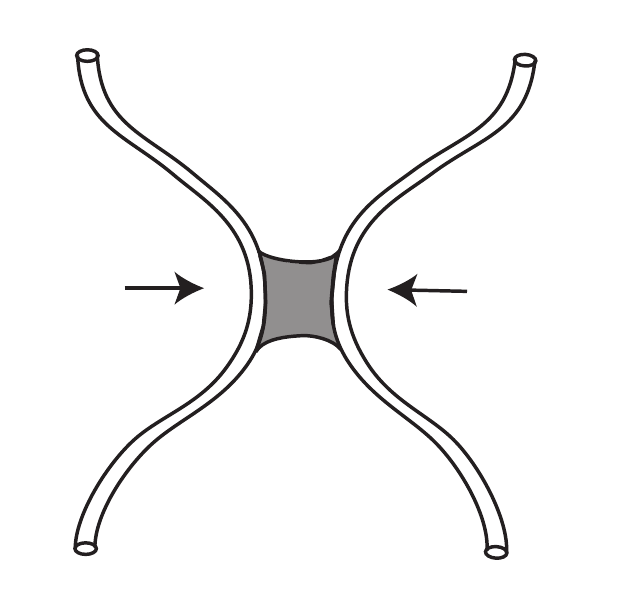}}
	\caption{Shrinking $\alpha$.}\label{Fi:band}
\end{center}
\end{figure} 

Cutting along $N(\alpha)$ will not create any new crossings, so any change in aggregate slope will result from positive crossings becoming negative crossings or vice-versa, as a result of changes in orientation.  Since $\hat{L}$ will inherit the orientation of $L$, all positive type III crossings will remain positive, and all negative type III crossings will remain negative.  The only other crossings that contribute to aggregate slope are type IV crossings, but each type IV crossing will appear as in Figure \ref{Fi:crossing}, from which it follows that any change in the orientation of type IV crossings will increase or decrease the aggregate slope by exactly two.  Therefore, if $l(S',L')$ is even, then $l(S,L)$ will be as well, proving our statement.
\end{proof}

Although writhe and linking number may vary with the orientation of a link, the slope, aggregate slope, and twist will not.

\begin{prop}

Slope, aggregate slope, and linking number are invariant across all projections of isotopic embeddings of a given surface $S$ spanning a given oriented link $L$, but writhe and twist are not.  Writhe and twist are invariant across all projections of isotopic embeddings of $S$ in which $L$ has a given orientation and is in reduced, alternating form.

\end{prop}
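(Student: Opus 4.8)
The plan is to treat the three quantities separately, exploiting that slope, aggregate slope, and linking number are defined intrinsically as linking numbers of curves in $S^3$, whereas writhe is a genuinely diagrammatic count and twist is an affine function of it.

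First I would dispatch the invariance of $l(S,\hat{L})$: by definition it is the linking number of the oriented knot $\hat{L}$ with the parallel curve $S\cap\partial N(\hat{L})$ on $S$. An ambient isotopy of $S^3$ carrying one embedding of $S$ to an isotopic one drags $L$, $N(L)$, and $S\cap\partial N(L)$ along with it, so it carries this ordered pair of oriented curves to an isotopic ordered pair; linking number is an isotopy invariant of a $2$-component link and can be read off any projection, so $l(S,\hat{L})$ is the same for all resulting projections. Summing over components gives the same conclusion for the aggregate slope $l(S,L)$, and the identical argument applies to $lk(L)$, which is a sum of pairwise linking numbers of the components of $L$. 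This settles the first clause.

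Next I would show that writhe and twist genuinely depend on the projection. Performing a single Reidemeister~I move on a projection of $L$ --- realized by an isotopy of $S^3$ that pushes a small finger of $S$ along with the introduced kink --- changes $w(P)$ by $\pm1$ while leaving $S$ untouched, hence leaving $l(S,L)$ and $lk(L)$ fixed; so writhe is not invariant across projections of isotopic embeddings of $S$. Since $\tau(S,P)=l(S,L)-(w(P)-2\,lk(L))$, with the outer two terms now known to be constant, the same move changes $\tau(S,P)$ by $\mp1$, so twist is not invariant either.

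Finally I would restrict to reduced alternating projections. Here I would invoke the fact recorded at the start of Section~2, that the writhe of a reduced alternating projection of an oriented link depends only on the oriented link; this in turn follows from the flyping theorem, since any two reduced alternating diagrams of a fixed oriented link differ by a sequence of flypes and each flype preserves writhe. Hence if $P$ and $P'$ arise from two isotopic embeddings of $S$, each with $L$ carrying the prescribed orientation and each reduced and alternating, then $w(P)=w(P')$; combining this with the invariance of $l(S,L)$ and $lk(L)$ from the first step and the defining formula for $\tau$ yields $\tau(S,P)=\tau(S,P')$. The only non-formal ingredient is this writhe-invariance of reduced alternating diagrams, which is exactly the classical Tait-type result quoted in Section~2; everything else is bookkeeping with the definitions, so I do not anticipate a real obstacle.
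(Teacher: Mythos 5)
Your proposal is correct and follows essentially the same route as the paper: invariance of slope, aggregate slope, and linking number because they are linking numbers of curves carried along by the isotopy; non-invariance of writhe (and hence twist, via $\tau(S,P)=l(S,L)-(w(P)-2\,lk(L))$) exhibited by adding a kink; and invariance of writhe across reduced alternating diagrams via the flyping theorem. No issues.
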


\begin{proof}

Isotopy of $S$ does not change the identity of the link $L \cup(S \cap \partial N(L))$, and since linking number is invariant across all projections, slope, aggregate slope, and linking number are invariant across all projected embeddings of $S$.  Adding a half-twist to $P(L)$ alters the writhe, and the twist as well, since $\tau(S,P)=l(S,L) - (w(P)-2lk(L))$.  Because writhe is invariant under flyping, it is invariant across all reduced, alternating projections of an oriented link $L$.  Therefore, twist is as well.
\end{proof}

Note that compression increases $\chi(S)$ by two, and boundary compression increases $\chi(S)$ by one. 

\begin{prop}A meridianal boundary-compression of a spanning surface yields another spanning surface.
\end{prop}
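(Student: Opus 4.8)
The plan is to perform the boundary compression, call the result $S'$, and verify the three requirements in the definition of a spanning surface: no closed components, $\partial S' \subset \partial N(L)$, and---the only real content---that $S'$ meets each component of $\partial N(L)$ in a single $(p,1)$-curve. Realize $S'$ as $(S \setminus N(\beta)) \cup D_1 \cup D_2$, where $N(\beta)$ is a small rectangular neighborhood of $\beta$ in $S$ and $D_1, D_2$ are disjoint parallel push-offs of $D$ attached along the two $\beta$-parallel sides of $\partial N(\beta)$, each contributing a push-off $\alpha_i$ of $\alpha$ to $\partial S'$. Since $D_1$ and $D_2$ are disks attached along arcs and $S$ has no closed component, neither does $S'$; and since $D$ meets $\partial N(L)$ only in the arc $\alpha$, which lies on the single component $\partial N(L_k)$, the surface $S'$ coincides with $S$ near every other component, so there it still meets $\partial N(L_j)$ in a single $(p_j,1)$-curve. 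Everything therefore reduces to the curve $c := S' \cap \partial N(L_k)$ on the torus $T := \partial N(L_k)$.

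Since $S$ is a spanning surface, $\gamma := S \cap T$ is a single curve and both endpoints of $\beta$ lie on it; so $c$ is obtained from $\gamma$ by cutting it at the two points $\partial\alpha$ into arcs $\phi_1$ and $\phi_2$ and reassembling the four arcs $\phi_1, \phi_2, \alpha_1, \alpha_2$ into simple closed curves. Counting how the arcs can join shows there are exactly two cases: either $c$ is one simple closed curve, or $c$ is a pair of disjoint simple closed curves which (sliding the $\alpha_i$ back onto $\alpha$) are isotopic in $T$ to $\alpha \cup \phi_1$ and $\alpha \cup \phi_2$. Writing $\mu_k, \lambda_k$ for a meridian and longitude on $T$ and $v := [\alpha \cup \phi_1] \in H_1(T)$ for the class of that loop, one has $[\gamma] = p_k\mu_k + \lambda_k$ and $[\alpha \cup \phi_2] = [\gamma] + v$, and a direct bookkeeping of the one-curve surgery gives $[c] = -[\gamma] - 2v$.

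Now I would dispose of the two-curve case. There $c = c_1 \sqcup c_2$ with $\{c_1, c_2\}$ equal, up to isotopy in $T$, to $\{\alpha\cup\phi_1, \alpha\cup\phi_2\}$; say $c_1$ is the one that the meridianal hypothesis declares a meridian, so $[c_1] = \pm\mu_k$. Because $[\gamma] = p_k\mu_k + \lambda_k$ has longitude coefficient $1$ and $[\alpha\cup\phi_2] = [\gamma] + v$, the class of the other curve $c_2$ has longitude coefficient $\pm 1$ in either sub-case; hence $[c_2] \ne 0$ and $[c_2] \ne \pm[c_1]$, so $c_1$ and $c_2$ are two disjoint simple closed curves on a torus that are neither isotopic nor null-homotopic --- impossible. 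Therefore $c$ is a single simple closed curve. Substituting the value of $v$ forced by the meridianal hypothesis into $[c] = -[\gamma] - 2v$ gives $[c] = \pm\big((p_k \pm 2)\mu_k + \lambda_k\big)$, a primitive class with longitude coefficient $\pm 1$; since $c$ is an embedded curve realizing it, $c$ is a $(p_k\pm 2, 1)$-curve, and $S'$ is a spanning surface for $L$ (its aggregate slope differing from that of $S$ by $\pm 2$).

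I expect the main obstacle to be the one-curve surgery identity $[c] = -[\gamma] - 2v$: getting the cut-and-reassemble picture and the orientations of the two copies of $\alpha$ right, and seeing that the ``twisting'' of $\beta$ in $S$ responsible for producing a connected $c$ is exactly what contributes the term $-2v$ --- by contrast the ``untwisted'' pattern produces the two-curve configuration, which the meridianal hypothesis has just been shown to forbid. A secondary point is the identification, in the two-curve case, of the resulting curves with $\alpha\cup\phi_1$ and $\alpha\cup\phi_2$, which is what lets the hypothesis be applied. The hypothesis that $\beta$ cuts no disk off $S$ is not needed for this argument; it only serves to ensure the compression is nontrivial.
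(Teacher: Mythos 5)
Your proof is correct, but it takes a genuinely different route from the paper's. The paper's argument is essentially pictorial: it isotopes the compressing disk $D$ into a standard local position near $\partial N(\hat{L})$ and then reads off from the resulting local model (Figure \ref{compression}) that cutting $S$ along $N(\beta)$ and gluing in two parallel copies of $D$ produces a single $(p\pm 2,1)$-curve, so that the result is again a spanning surface. You instead argue homologically on the torus $T=\partial N(L_k)$: you decompose the new boundary into the four arcs $\phi_1,\phi_2,\alpha_1,\alpha_2$, observe that the only two reassembly patterns are a single circle or the pair isotopic to $\{\alpha\cup\phi_1,\alpha\cup\phi_2\}$, exclude the two-circle case because the meridianal hypothesis would then force two disjoint, essential, non-isotopic simple closed curves on a torus, and compute the class of the surviving circle as $\pm[\gamma]\pm 2v$. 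This buys a proof that does not rest on normalizing a picture and makes explicit exactly where the meridian hypothesis is used (twice: to rule out disconnection of the boundary, and to pin the longitude coefficient of $[c]$ at $\pm 1$). The orientation bookkeeping you flag is genuinely the only delicate point, but it is harmless: since $[\alpha\cup\phi_1]+[\alpha\cup\phi_2]=\pm[\gamma]$, substituting a meridian value for either class into $[c]=\pm[\gamma]\pm 2v$ yields $\pm\bigl((p_k\pm 2)\mu_k+\lambda_k\bigr)$ either way, recovering the paper's observation that the slope changes by exactly $\pm 2$.
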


\begin{proof}
Suppose that  surface $S$ spans link $L$ and that $D$ is a meridianal boundary-compression disk.  Then $\textit{\r{D}} \cap S = \varnothing$ and $\partial D = \alpha \cup \beta$ where $\alpha \subset \partial N(\hat{L})$ for some link component $\hat{L}$ and $\beta \subset S \backslash N(\hat{L})$.   Note that $\alpha$ intersects $S$ only at its endpoints, and that $S \cap N(\hat{L})$ consists of two arcs sharing the endpoints of $\alpha$.  We see then that we can isotope $D$ through $S^3 \backslash S$, with its boundary moving across $S \cup \partial N(\hat{L})$, such that we obtain the general picture of a meridianal boundary compression shown in Figure \ref{compression} (in fact, there are two possible pictures of a mertidianal boundary compression, the second being the reflection of the one shown in the figure).  

Performing the boundary compression amounts to cutting $S$ along $N(D) \cap S$, and then gluing in two parallel copies of $D$.  The picture shows that this yields a spanning surface, whose slope along $\hat{L}$ differs from that of $S$ by $\pm 2$.  

\end{proof}

\begin{figure}
\begin{center}
	\scalebox{.3}{\includegraphics{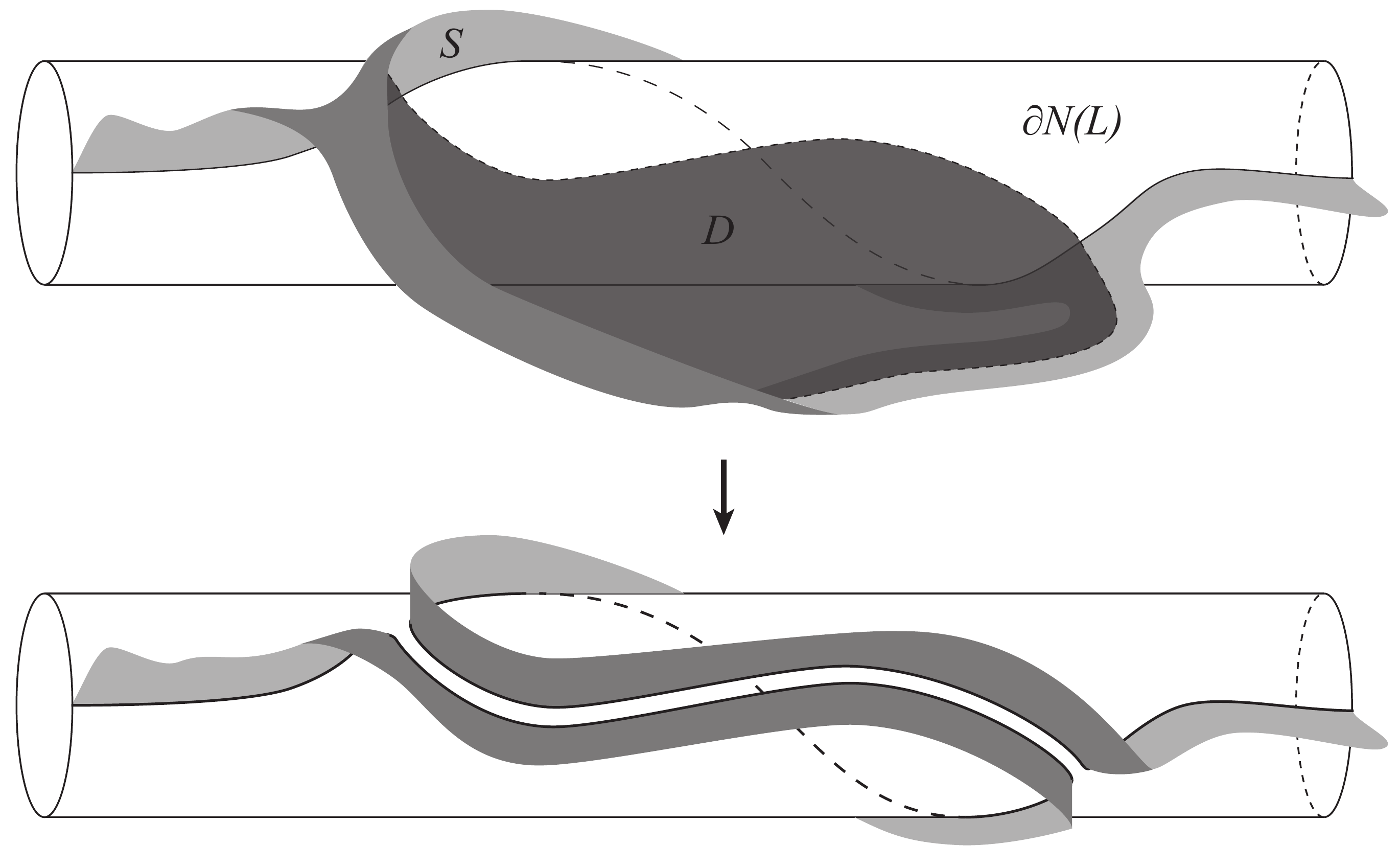}}
	\caption{Meridianal boundary-compression of a spanning surface yields another spanning surface.}\label{compression}
\end{center}
\end{figure}

Note that the meridianal boundary compression yields another spanning surface with slope that has increased or decreased by exactly two, while compression leaves the slope unchanged.  Since neither compression nor meridianal boundary compression affects the writhe or linking number of an oriented projection, meridianal boundary compression increases or decreases twist by exactly two and compression leaves twist unchanged.

Seifert's Algorithm for producing a Seifert surface for an arbitrary link  need not generate a minimal genus Seifert surface, as the example in Figure \ref{Fi:compress} demonstrates. 

\begin{figure}
\begin{center}
	\scalebox{.5}{\includegraphics{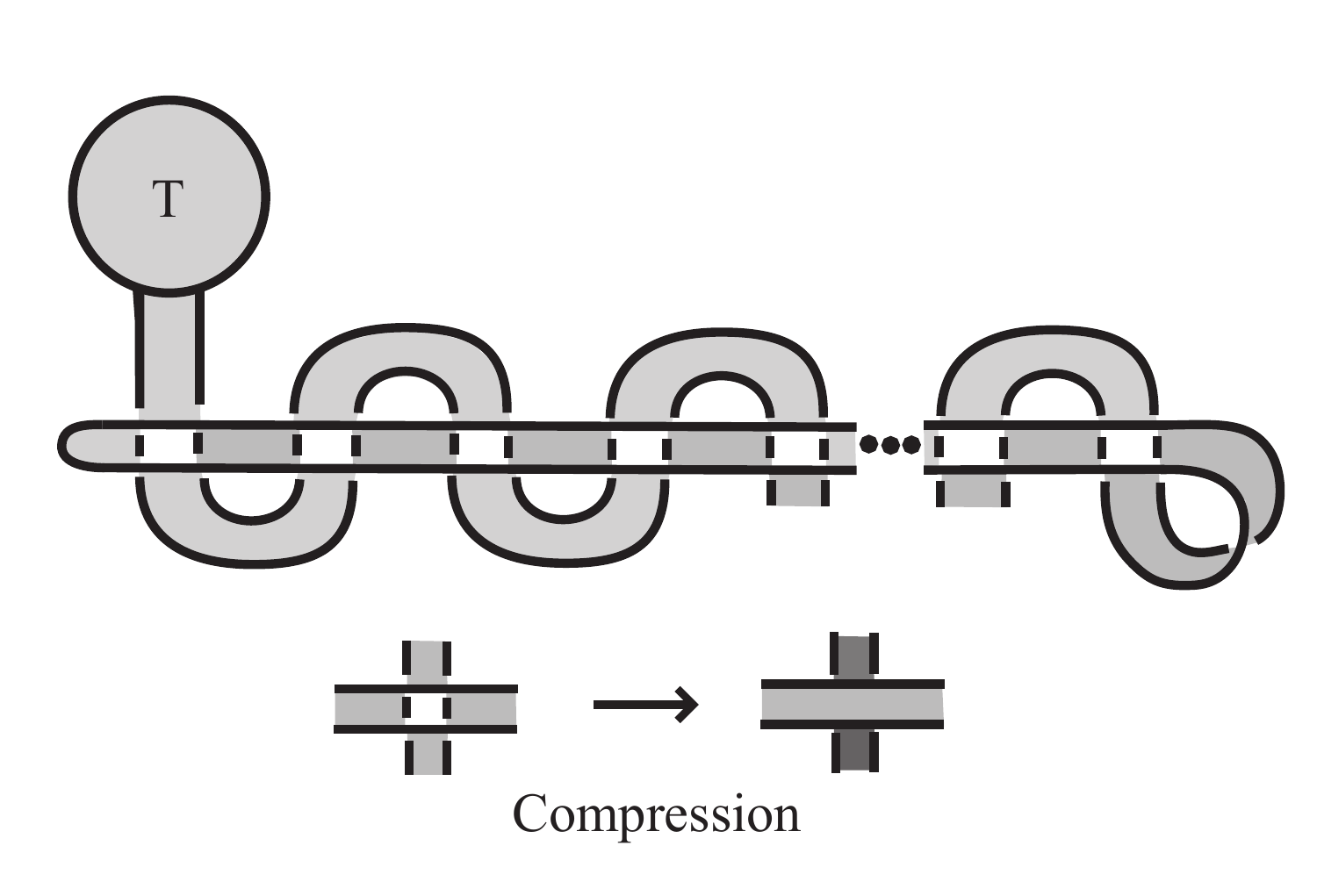}}
	\caption{Seifert's Algorithm can generate a surface that can be compressed an arbitrarily large number of times.}\label{Fi:compress}
\end{center}
\end{figure} 

However, Theorem \ref{Gabai} shows that it does when the link is alternating. One would like to obtain a similar result in the case of nonorientable spanning surfaces, so it is worth considering Gabai's method of proof.

 First, he shows that given orientable surfaces $S$ and $T$, each spanning an oriented link $L$, with $g(T)<g(S)$, there exists an orientable surface $T'$ with smaller genus than $S$ such that \textit{\r{T'}} $\cap$ \textit{\r{S}}$=\varnothing$.  Second, he takes a surface $S$ obtained by applying Seifert's Algorithm to an $n$-crossing oriented link and assumes there exists an orientable spanning surface of smaller genus.  He then uses the earlier result to construct a surface $T$ of smaller genus than $S$, where the two are disjoint in their interiors.  He then uses this structure to find arcs on $S$ and $T$ that are parallel to the same crossing, and he cuts along each.  This reduces by one the crossing number of their shared boundary and increases by one each surface's Euler characteristic.  He now has surfaces $S'$ and $T'$ spanning an $(n-1)$-crossing alternating link, where $S'$ comes from Seifert's Algorithm and $g(T')>g(S')$.  From here, he applies an inductive argument on crossing number and reaches a contradiction. 
\bigskip

The proof is elegantly elementary but fails to generalize to nonorientable surfaces because of the following.

\bigskip

\begin{prop} 

No two spanning surfaces of the same link, at least one of which is nonorientable, can be disjoint in their interiors.

\end{prop}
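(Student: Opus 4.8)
The plan is to argue by contradiction. Suppose $S$ and $T$ both span $L$, that $S$ is nonorientable, and that $\mathring S\cap\mathring T=\varnothing$, and derive that $S$ must after all be orientable. The guiding idea: once the interiors are disjoint, $S$ and $T$ can meet only along $L$, so their union is a \emph{closed} surface embedded in $S^3$; but closed surfaces embedded in $S^3$ are orientable, and an orientation of that union restricts to one on $S$.

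First I would normalize the picture, realizing $S$ and $T$ inside $S^3$ (rather than $S^3\setminus N(L)$) so that $\partial S=\partial T=L$. Since $L=\partial S$ is disjoint from $\mathring S$ and from $\mathring T$, and $\mathring S\cap\mathring T=\varnothing$ by hypothesis, we get $S\cap T=L$. I would then check that $\Sigma:=S\cup_L T$ is a closed surface with no boundary: near a point of $\mathring S$ or $\mathring T$ this is clear, and near a point $x\in L$ the set $\Sigma$ is the union of two half-disk ``flaps'' (one from $S$, one from $T$) meeting exactly along the common boundary arc through $x$, which is a disk neighborhood; after a small isotopy of $T$ near $L$ we may take $\Sigma$ to be smoothly (or PL) embedded in $S^3$. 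It has no boundary because every boundary circle of $S$ is a component of $L$ and hence is also a boundary circle of $T$. (If $S$ or $T$ is disconnected, $\Sigma$ is a disjoint union of such closed surfaces, and the rest of the argument applies componentwise.)

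The next step is the topological input: every closed connected surface $\Sigma_0$ embedded in $S^3$ is orientable. Here I would invoke Alexander duality, $\tilde H_0(S^3\setminus\Sigma_0;\mathbb Z/2)\cong H^2(\Sigma_0;\mathbb Z/2)\cong\mathbb Z/2$, so $\Sigma_0$ separates $S^3$, hence is two-sided, hence orientable. (Equivalently, a one-sided surface would have connected complement, contradicting this computation — this is exactly why $\mathbb{RP}^2$ and the Klein bottle do not embed in $S^3$.) Thus $\Sigma$ is orientable. Finally, since $S$ is a subsurface of the orientable surface $\Sigma$, an orientation of $\Sigma$ restricts to an orientation of $S$, contradicting nonorientability of $S$.

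I expect the only delicate point to be the first step: verifying carefully that disjoint interiors force $S\cap T=L$ and that $S\cup_L T$ is genuinely a closed embedded surface — in particular that $S$ and $T$ meet $L$ as two flaps rather than crossing through it, which holds precisely because each of $S$ and $T$ has all of $L$ as honest boundary. Everything after that is standard. As a sanity check: for $L$ the unknot with $S$ a Möbius band and $T$ a disk, $\Sigma$ would have $\chi(\Sigma)=1$, i.e.\ $\Sigma=\mathbb{RP}^2$, which indeed cannot embed in $S^3$.
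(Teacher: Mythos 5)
Your proof is correct and follows essentially the same route as the paper's: both form a closed surface by gluing the two spanning surfaces along their common boundary and then invoke the fact that no closed nonorientable surface embeds in $S^3$. The only cosmetic difference is that the paper keeps the tubular neighborhood $N(L)$ and joins the two boundary curves by an annulus in $\partial N(L)$ (noting that disjointness forces equal boundary slopes), whereas you glue directly along $L$ and supply the Alexander-duality justification that the paper leaves implicit.
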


\bigskip

\begin{proof}

Given two surfaces disjoint in their interiors, spanning the  
same link, they must have the same boundary slopes. We can then connect their two boundaries with an annulus in $\partial N(L)$ to obtain a single, closed surface without boundary, embedded in 3-space.  If either spanning surface is nonorientable, then the closed surface will be as well.  Since no closed, nonorientable surface can be embedded in 3-space, we have a  
contradiction.
\end{proof}

Before discussing nonorientability further, we present one more useful fact.  Suppose a surface $S$ spans an oriented link $L$.  For any arc $\alpha \subset S$, with both endpoints on $L$, $N(\alpha) \cap S$ will take one of the forms pictured in Figure \ref{Fi:orientbands}.  We will call the former a \textbf{de-orienting band} and the latter an \textbf{orienting band}.

\begin{figure}
\begin{center}
	\scalebox{.5}{\includegraphics{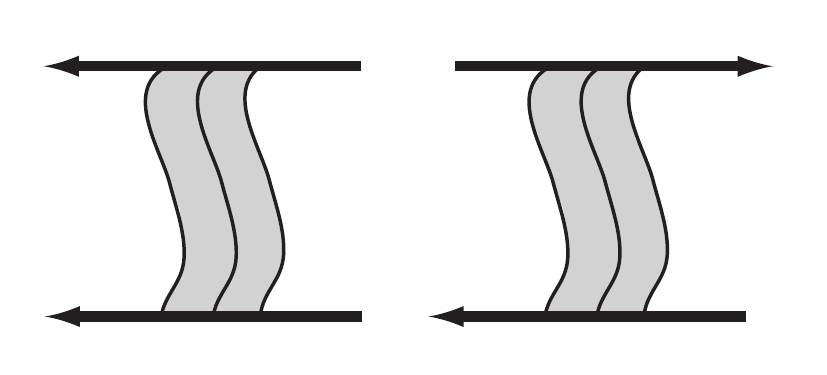}}
	\caption{A de-orienting band and an orienting band.}\label{Fi:orientbands}
\end{center}
\end{figure}

\begin{prop}

A surface $S$ spanning $L$ is orientable if and only if it is possible to orient $L$ so that $S$ contains no de-orienting bands.  Equivalently, $S$ is nonorientable  if and only if $S$ contains a de-orienting band under any orientation of $L$.

\end{prop}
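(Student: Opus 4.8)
The plan is to prove both directions by relating orientability of $S$ directly to the combinatorics of how $S$ meets a neighborhood of $L$, using the de-orienting/orienting band dichotomy established just before the statement. The key observation is that a spanning surface $S$ deformation retracts onto a spine consisting of finitely many disks (0-handles) joined by bands (1-handles), and we may arrange the spine so that the disks lie in a neighborhood of $L$ and each band is an arc $\alpha$ with both endpoints on $L$ of the type pictured in Figure~\ref{Fi:orientbands}; thus $S$ is built from $N(L)\cap S$ (a collection of annuli, one per component) together with bands attached along $L$.

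For the forward direction, suppose $S$ is orientable. Fix an orientation of $S$; this induces an orientation on $\partial S = L$. I claim that with this orientation of $L$, no band of $S$ can be de-orienting. Indeed, a de-orienting band is exactly a band whose two attaching arcs on $L$ are coherently oriented as ``parallel'' strands in the local picture, which forces the two sides of the band to receive opposite surface orientations — contradicting that the chosen orientation of $S$ restricts consistently across the band. More carefully, one checks from Figure~\ref{Fi:orientbands} that an orienting band admits a consistent choice of ``positive side'' across the band compatible with a fixed orientation of the core arc of each adjacent annulus, whereas a de-orienting band does not; since $S$ orientable means all these local choices glue, every band must be orienting. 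Conversely, if $L$ can be oriented so that $S$ has no de-orienting bands, then starting from an orientation of the annuli $N(L)\cap S$ induced by the chosen orientation of $L$ and propagating across each (orienting) band, we obtain a globally consistent orientation of the spine, hence of $S$.

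The contrapositive/``equivalently'' clause is then purely formal: $S$ is nonorientable iff it is \emph{not} the case that some orientation of $L$ makes $S$ free of de-orienting bands, i.e.\ iff under \emph{every} orientation of $L$ the surface $S$ contains a de-orienting band. One should note that reorienting a component of $L$ reverses the orientation of the corresponding annulus and hence swaps the ``orienting/de-orienting'' status of precisely those bands with exactly one endpoint on that component — so the statement is genuinely about the existence of \emph{some} good orientation, not an arbitrary one, and the argument above accommodates this since the propagation step is carried out after the orientation of $L$ is fixed.

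The main obstacle is making the band-propagation argument rigorous without hand-waving: one must justify that $S$ admits a spine of the stated form (disks near $L$, bands along $L$) so that orientability of $S$ is equivalent to orientability of this combinatorial object, and then verify the local claim — that across an orienting band the two induced orientations agree while across a de-orienting band they disagree — directly from Figure~\ref{Fi:orientbands}. This is where all the real content lies; once the local picture is pinned down, the global statement follows by a connectedness/propagation argument on the dual graph of the spine (choose orientations on the annuli from $L$, then extend uniquely across each orienting band, with no obstruction precisely when no band is de-orienting).
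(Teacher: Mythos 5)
Your forward direction (orient $S$, let $L$ inherit the boundary orientation, observe that a de-orienting band would contradict that inherited orientation) is the same as the paper's. For the other direction you and the paper go opposite ways around the contrapositive: the paper shows directly that a \emph{nonorientable} $S$ contains a de-orienting band under every orientation of $L$, by taking the core $\beta$ of a M\"obius band, two arcs $\gamma_1,\gamma_2$ from $\beta$ to $L$, and noting that of the two bands $N(\gamma_1\cup\beta_1\cup\gamma_2)$ and $N(\gamma_1\cup\beta_2\cup\gamma_2)$ (Figure~\ref{Fi:Mobius}), which differ by once around the orientation-reversing curve $\beta$, one must be de-orienting no matter how $L$ is oriented. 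You instead prove ``no de-orienting bands $\Rightarrow$ orientable'' by propagating an orientation over a band decomposition. Your route is workable and arguably more illuminating (it exhibits orientability as exactly the consistency of the propagation), but it is also where your write-up stops short: you defer precisely the two points that carry the content, namely the existence of the decomposition and the local check at a band. On the first point, your structural claim as stated is not quite right: $S$ is not in general just the annuli $N(L)\cap S$ with bands attached along $L$ --- cutting $S$ along a maximal system of disjoint properly embedded arcs leaves complementary \emph{disks} (already the spanning disk of the unknot is an annulus plus a $2$-handle, with no bands at all), so the decomposition must include these $2$-handles; they are harmless for orientability since gluing a disk along an interior circle of an oriented subsurface always extends the orientation, but this needs to be said. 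The local check itself does go through: an orientation of a collar annulus is determined by requiring it to induce the given orientation on its $L$-component, and a band is orienting precisely when one orientation of its rectangle induces the given $L$-orientation on both of its $L$-sides, i.e.\ precisely when the two collar orientations extend over the band. The paper's M\"obius-band trick buys brevity and avoids any decomposition; your argument, once the $2$-handles are accounted for and the local verification is written out against Figure~\ref{Fi:orientbands}, buys a constructive proof of the positive direction rather than a proof by contradiction.
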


\begin{proof}

If a surface is orientable, its boundary inherits an orientation from an orientation of the entire surface. The existence of a de-orienting band would contradict the orientation inherited by the boundary. Conversely, if the surface is non-orientable it contains a Mobius band. Let $\beta \subset S$ be the core curve of this Mobius band and let $\gamma_1$, $\gamma_2 \subset S$ be arcs with one endpoint on $\beta$ and the other on $L$.  The endpoints of $\gamma_1$ and $\gamma_2$ on $\beta$ divide it in two.  Let $\beta_1$ and $\beta_2$ be these two arcs of $\beta$, as in Figure \ref{Fi:Mobius}.  

\begin{figure}
\begin{center}
	\scalebox{.5}{\includegraphics{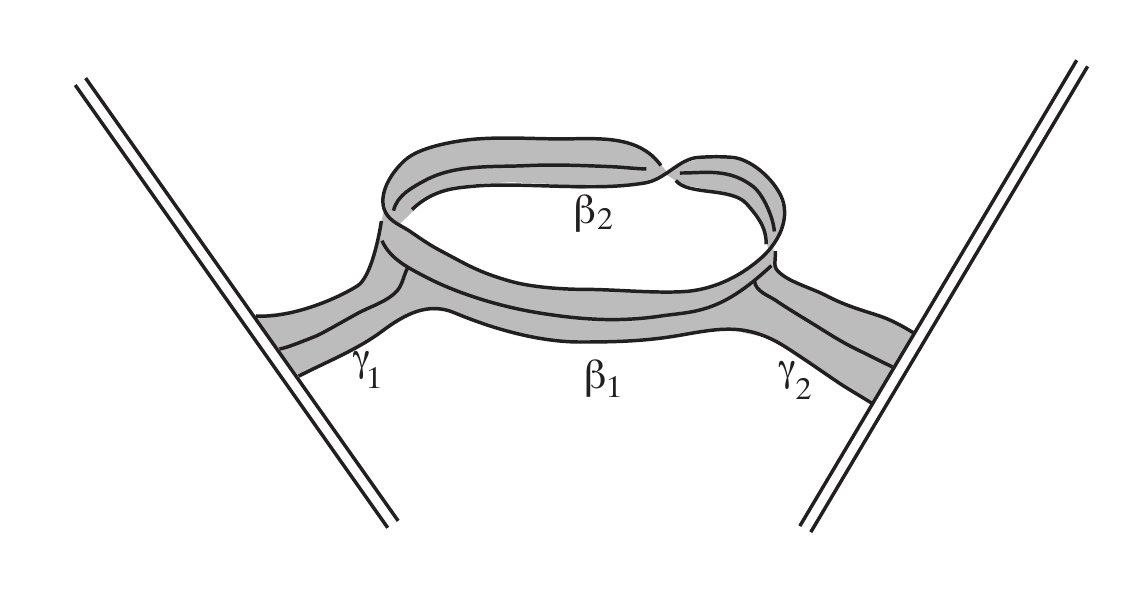}}
	\caption{$N(\gamma_1 \cup \beta \cup \gamma_2) \cap S$}\label{Fi:Mobius}
\end{center}
\end{figure} 

Then under any orientation of $L$ either $N(\gamma_1 \cup \beta_1 \cup \gamma_2)$ or $N(\gamma_1 \cup \beta_2 \cup \gamma_2)$ will be a de-orienting band. 
\end{proof}

This will be an important fact in the proof of Theorem \ref{main}.  As for the matter at hand, we say that a spanning surface $S$ is orientable \emph{relative} to its oriented boundary link $L$ if $L$ is oriented in the way guaranteed by the above result.  Note that any orientable surface $S$ can be oriented (assigned a normal direction) in two ways, and each of these orientations defines an orientation on its link boundary, $L$.  These orientations of $L$ are precisely the ones relative to which $S$ is oriented.  The example in Figure \ref{Fi:annulus} may be useful in seeing how this distinction is important for Theorem \ref{main}, since this surface is orientable (it contains no Mobius band) but \emph{not} relative to its oriented boundary (it contains de-orienting bands).  

\begin{figure}
\begin{center}
	\scalebox{.5}{\includegraphics{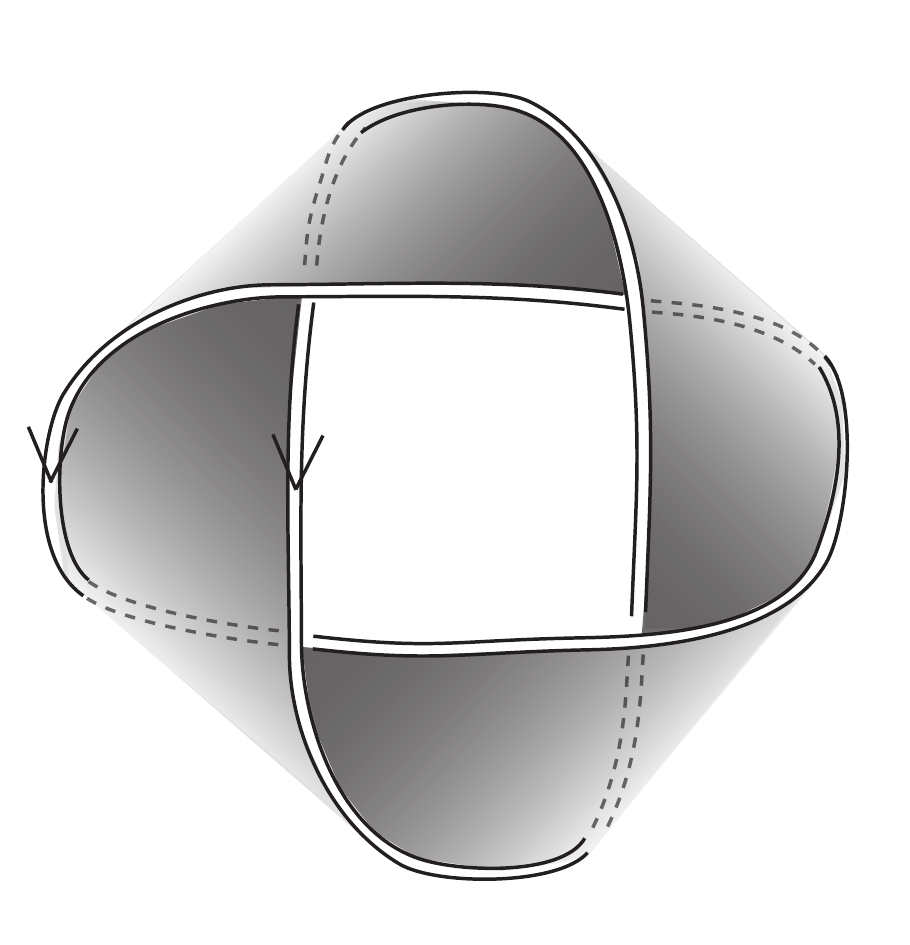}}
	\caption{This annulus spanning the $4_1^2$ link is orientable because it contains no Mobius band, but not with respect to its oriented boundary, since it contains a de-orienting band.}\label{Fi:annulus}
\end{center}
\end{figure} 

\section {Layered Surfaces} \label{Sec:layered}

\subsection{Construction}
In \cite{A5},  checkerboard  
surfaces generated from reduced, alternating projections were shown to be  
essential, non-accidental, and non-fibered and therefore quasi-Fuchsian surfaces in hyperbolic link complements. We seek to describe a larger class of essential, non-accidental, nonorientable  surfaces spanning these links.  Since fibers must be orientable, any such surfaces must be quasi-Fuchsian. To this end, we devised the following natural extension of Seifert's Algorithm, which we call the Layered Surface Algorithm, as depicted in Figure \ref{Fi:layered}.

\bigskip

(1) Beginning with an alternating projection $P(L)$, split open each  
crossing in one of the two directions. 

(2) This will result in a collection of non-overlapping, possibly  
nested circles.  Choose a region on the projection sphere to contain ${\infty}$. Put nested circles at different heights relative to  
$S^2$ and fill each circle with a disk to the side that does not contain ${\infty}$ when projected to $S^2$.

(3) Connect the disks with half-twist bands at the crossings, yielding a  
surface with boundary equal to the link such that the link projects to $P(L)$, as in Seifert's Algorithm, if it is possible to do so without a crossing band intersecting an existing disk. If this is not possible, change the heights of the disks so that the disks can be connected with crossing bands.

\begin{figure}
\begin{center}
	\scalebox{.5}{\includegraphics{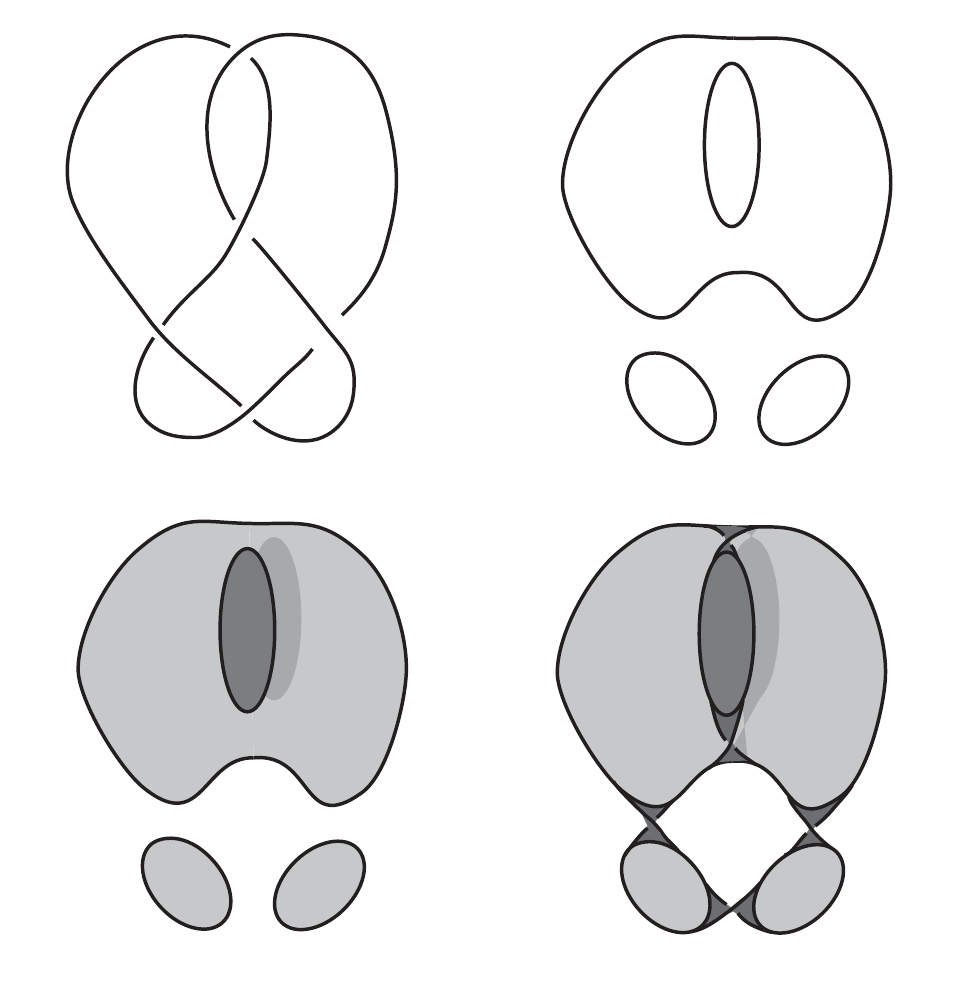}}
	\caption{Constructing a layered surface.}\label{Fi:layered}
\end{center}
\end{figure} 

Not all choices of relative disk heights can generate a layered surface, as the crossing band between two disks might intersect a third disk.  It is always possible, though, to choose heights for the disks to generate a layered surface, most simply by stacking all inner disks (disks with boundary circles inside boundary circles of other disks) above outer ones.  Note that while different choices of relative heights for nested circles may well generate distinct layered surfaces, the choices of disk heights will not affect Euler characteristic, orientability, or aggregate slope.  

Layered surfaces that have a crossing band that connects a circle to itself are often boundary-compressible (in fact, meridianally boundary-compressible). To avoid this issue, we define a \textbf{basic layered surface} to be one that does not have a crossing band connecting a circle to itself. For our purposes, the only layered surfaces with which we will be concerned are basic layered surfaces. In a subsequent paper, we plan to discuss the questions of essentiality and non-accidentality of these surfaces.

\subsection{Properties}

The greatest initial evidence that  
\emph{something like} Theorem \ref{main} might be true was that for  
small crossing number (all of the knots are 2-bridges), layered surfaces seemed to match up perfectly (at least according to genus, orientability and boundary-slope)  
with Hatcher and Thurston's surfaces from \cite{HA}, which they proved to be a complete classification of essential surfaces spanning these knots.  

In 1991, W. Menasco and M. Thistlethwaite used Menasco's geometric structure to prove Tait's Flyping Conjecture (cf. \cite{M3}, \cite{M6}), which states that any two reduced, alternating projections of the same link are related through a finite sequence of flypes.  Let us consider what happens to layered surfaces under  
flyping.

\begin{prop}

Given a layered surface $S$ generated from an alternating projection $P$ and a flype that takes $P$ to projection $P'$, there exists a layered surface $S'$ obtained from $P'$ with the same genus, orientability, and aggregate slope as $S$, where $S'$ is a basic layered surface if and only if $S$ is.  

\end{prop}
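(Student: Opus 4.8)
The plan is to realize the flype by an ambient isotopy of $S^3$ and carry $S$ along it. A flype is localized in a ball $B$ meeting $L$ in four points, inside of which lies a tangle $F$ together with one further crossing $c$; the move rotates $F$ by a half-turn while $c$ slides from one side of $F$ to the other, and it is realized by an ambient isotopy $\phi$ of $S^3$ that drags $c$ around $F$. One may arrange $\phi$ so that the projection of $\phi(L)$ is $P'$, so that $\phi$ carries $N(L)$ to $N(\phi(L))$, so that $\phi$ is the identity on a region of the projection sphere chosen to contain $\infty$ and disjoint from the flype, and so that $\phi$ is ``nice'' near every crossing, taking the local crossing picture of $P$ to that of $P'$ and respecting the two local strands; in particular $\phi$ induces a bijection between the crossings of $P$ and those of $P'$ (the crossings of $F$ to their flipped copies, and $c$ to its image $c'$) and commutes with smoothing. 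Let $s$ be the state producing $S$ and let $s'$ be the state of $P'$ corresponding under this bijection. Set $S' := \phi(S)$; since $\phi$ is an ambient isotopy, $S'$ is a spanning surface for $L' = \phi(L)$, homeomorphic to $S$ and ambient isotopic to it.

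The substance of the proof is that $S'$ is, after at most a further isotopy rel $L'$ straightening its disks back to standard position, a layered surface for $P'$ built from the state $s'$. Because $\phi$ commutes with smoothing, it carries the disks of $S$, which fill the state circles of $(P,s)$, to disks filling the state circles of $(P',s')$, all on the side of the fixed $\infty$-region, which $\phi$ does not move; and it carries each half-twist band of $S$ at a crossing $c$ of $P$ to a half-twist band at $c'$ in $P'$ of the handedness demanded by $P'$, since a flype preserves alternation and the sign of every crossing. The disk heights remain admissible because $\phi(S)$ is embedded, so no band meets a disk. I expect this step to be the main obstacle: it is a figure-level verification whose delicate points are the effect of the half-turn on the smoothed flype tangle (it fixes each of the three possible matchings of $F$'s four boundary arcs, so the state-circle count is genuinely unchanged — a short case check on the three matchings and the two smoothings of $c$ confirms that the flype region contributes the same circles, joined to its boundary in the same pattern) and the reversal of the layering inside $B$ caused by the half-turn, which is again an admissible layering.

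Granting this, everything else is immediate. Being homeomorphic to $S$ and spanning a link $L'$ with the same number of components as $L$, the surface $S'$ has the same Euler characteristic and orientability type as $S$, hence the same genus; and being an ambient-isotopic copy of $S$, it has the same aggregate slope, since aggregate slope is a linking number and linking numbers are unchanged by ambient isotopy (this is the content of the proposition above on the invariance of aggregate slope). Finally, a layered surface fails to be basic exactly when some crossing band joins a state circle to itself, and since $\phi$ carries the crossing $c$ together with its two incident state circles to $c'$ with its incident state circles, $S'$ has such a band if and only if $S$ does; thus $S'$ is basic if and only if $S$ is. The inverse flype, realized by $\phi^{-1}$, makes the whole construction symmetric in $P$ and $P'$.
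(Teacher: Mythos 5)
Your strategy---realize the flype by an ambient isotopy $\phi$ and set $S'=\phi(S)$---front-loads the entire difficulty into the single claim that $\phi(S)$ is, up to isotopy rel $L'$, the layered surface of the corresponding state of $P'$. That claim is the substance of the proposition, it is not proved in your write-up, and it is false in the form you assert it. The point of failure is the distinguished crossing of the flype: any isotopy realizing a flype \emph{untwists} the crossing $c$ on one side of the tangle and \emph{creates} the new crossing $c'$ on the other side, at a different location in the diagram and adjacent, in general, to different state circles. So $\phi$ does not ``commute with smoothing'' at $c$: the crossing band of $S$ at $c$ is carried to an untwisted band joining two now-parallel strands where $c$ used to be, while no half-twisted band appears at $c'$; instead the sheets of $\phi(S)$ near $c'$ acquire a half-twist that is not a crossing band. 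Hence $\phi(S)$ is a spanning surface for $L'$ but is not in layered form with respect to $P'$, and whether it can be put into layered form---and for which state, with which circle count---depends on how the smoothed tangle matches up its four boundary points, on the smoothing of $c$, and on how the state circles close up outside the flype ball. (Minor point: there are only two planar matchings of the four boundary points, not three; the crossed matching cannot be realized by disjoint arcs in a disk.)

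You flag this step as ``the main obstacle'' and defer it to ``a short case check,'' but that case check \emph{is} the proof. The paper's argument consists precisely of enumerating the eight configurations (two smoothings of the isolated crossing, two matchings of the tangle's boundary arcs by state circles, two closures outside the flype region), reducing to five by symmetry, and exhibiting in each a layered surface for $P'$ with the same Euler characteristic, orientability, and aggregate slope, checking basicness case by case. Note that the paper never claims $S'$ is isotopic to $S$---only that the three invariants agree---whereas your construction would prove the stronger isotopy statement; the downstream conclusions you draw (equal genus, orientability, slope via invariance of linking number, and the basic/non-basic equivalence via the crossing--circle correspondence) are all conditional on the unestablished main claim and so do not constitute independent progress.
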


\begin{proof}

See Figure \ref{Fi:flypes}. We have eight possible situations, depending on the crossing splits. After splitting, there are two possibilities for the single crossing; it can be split vertically or horizontally. After splitting in T, there are two possibilities. It could be the case that C shares a circle  with D and E shares a circle with F.  Or it could be that  C shares a circle with E and D shares a circle with F. Finally, there are two  possibilities for the splittings outside this part of the projection. Either A is connected to E and B to F, or A is connected to B and E to F.
Three of the eight possibilities yield figures equivalent to three others, yielding five figures to consider. For each, we perform a flype about the tangle $T$.  Since $S$ is layered, $S$ is layered both inside and outside of $T$. In each case, we can obtain a surface after the flype with the same genus, orientability and aggregate slope as the original surface. Note that the original and resulting surface are either both basic or neither basic. Also note that the last two cases are never basic layered surfaces and always nonorientable. 

\begin{figure}
\begin{center}
	\scalebox{.8}{\includegraphics{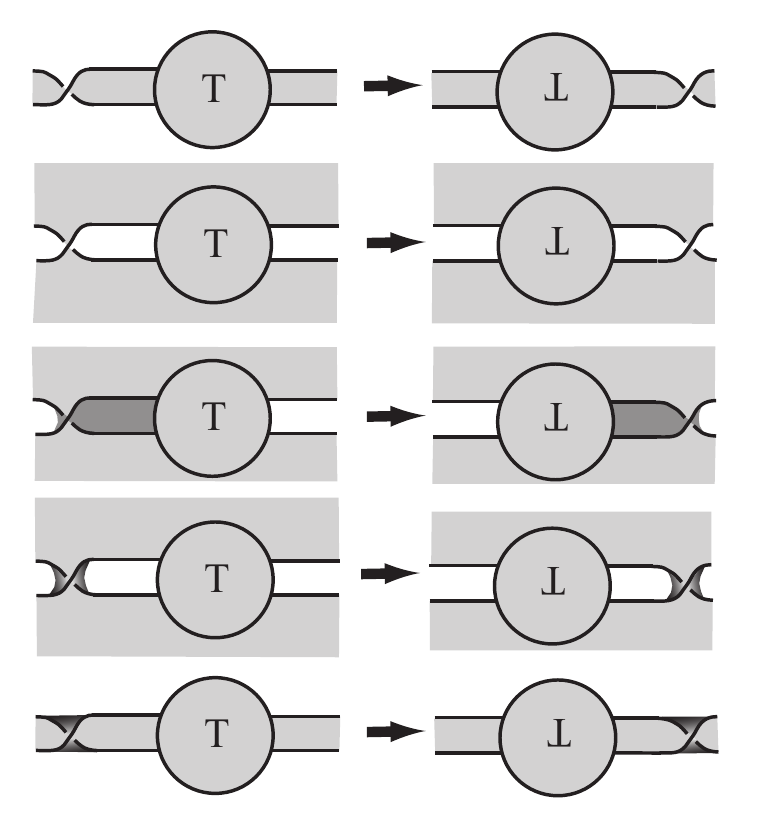}}
	\caption{The five possible appearances of a layered surface relative to a flype, together with the layered surface for the resulting projection that has the same genus, orientability, and aggregate slope as the original.}\label{Fi:flypes}
\end{center}
\end{figure} 
\end{proof}

So any two alternating projections of the same link  
generate equivalent collections of layered surfaces, up to genus, orientability and  
aggregate slope.

For our next fact, notice that in the construction of a layered surface we have two options in the splitting of each crossing, which we will call A and B splits, as in Figure \ref{Fi:splits}. As is standard(cf.\cite{A}, Chapter 6), the regions adjacent to a crossing can be labelled A and B depending on whether we twist the top strand counterclockwise or clockwise to cover those regions. The A or B-split corresponds to creating a channel between the two regions with that label at the crossing.  

\begin{figure}
\begin{center}
	\scalebox{.8}{\includegraphics{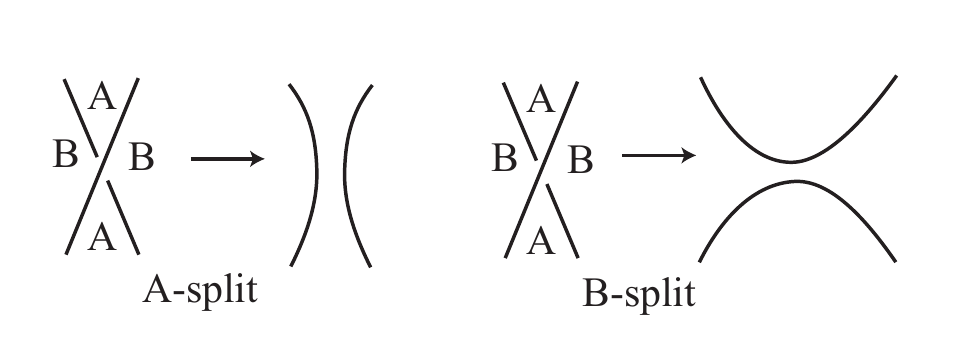}}
	\caption{A and B splittings in the construction of a layered surface.}\label{Fi:splits}
\end{center}
\end{figure} 

\begin{prop}

Suppose that the construction of a layered surface $S$ involves $a$ A-splits and $b$ B-splits.  Then $\tau(S,P)=a-b$.  It then follows that $l(S,L)=a-b+w(P)-2lk(L)$.

\end{prop}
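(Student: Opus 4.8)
The plan is to reduce the statement to the earlier Proposition expressing the twist of a spanning surface as $\tau(S,P)=\frac{p-n}{2}$, where $p$ and $n$ count the positive and negative Type III crossings --- those crossings of the projection of $L\cup(S\cap\partial N(L))$ between a component $L_i$ and its parallel $\hat{L}_i$ whose preimage runs across the annulus $S\cap N(L_i)$ from one boundary component to the other. Using the notation $I$, $II$, $III$ of that proof for signed crossing counts together with the identities $w(P)=I+II$ and $2lk(L)=II$ established there, one has $\tau(S,P)=\frac{III}{2}$, so it is enough to prove $III=2(a-b)$. The asserted formula $l(S,L)=a-b+w(P)-2lk(L)$ is then immediate from $\tau(S,P)=l(S,L)-\bigl(w(P)-2lk(L)\bigr)$.

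First I would locate the Type III crossings. Away from the crossing bands a layered surface is a disjoint union of disks lying flat in the projection, so there $\hat{L}$ runs flatly alongside $L$ and meets it in no crossing at all; hence every Type III crossing lies in the projected neighborhood of some crossing band. At a crossing band the band carries a single half-twist, its two long edges are arcs of $L$, and the two arcs of $\hat{L}$ run alongside them; following these four strands through the half-twist (the local picture behind Figures \ref{Fi:crossing} and \ref{Fi:splits}) shows that, besides the one Type I or II crossing of $L$ and the two Type IV or VI and one Type V or VII crossings already catalogued, there are exactly two Type III crossings at the band, one on each edge, whose preimages are the radial arcs of $S\cap N(L)$ running across the band at the half-twist. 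In the configuration of the earlier Proposition, where Type III crossings are isotoped off the crossing neighborhoods, these simply slide along the strands and their number and signs are unchanged. Thus the Type III crossings are exactly two per crossing, $2(a+b)$ in all.

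The crux, and the step I expect to cost the most care, is the sign. I would show that the two Type III crossings at a band carry a common sign, and that this sign is $+$ if the crossing was opened by an A-split and $-$ if by a B-split. The mechanism is that for a fixed crossing the A- and B-splits are complementary smoothings, so the half-twisted band restoring the crossing must have opposite handedness in the two cases, and opposite handedness reverses the sign of both Type III crossings at once; which split produces which sign is then read directly off Figure \ref{Fi:splits}. As a consistency check, Seifert's Algorithm applied to the standard diagram of the right-handed trefoil uses a B-split at every crossing, so $a-b=-3$, matching $\tau(S,P)=l(S,L)-(w(P)-2lk(L))=0-(3-0)=-3$ for its genus-one fiber surface. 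Note that a Type III crossing is a crossing of $L_i$ with its own parallel $\hat{L}_i$, which inherits the orientation of $L_i$; reversing the orientation of $L$ reverses both strands there and so fixes the sign. Hence each band's contribution to $III$ depends only on its A/B label, not on the orientation of $L$ --- as it must, since $\tau(S,P)$ is orientation independent --- and whether the band is orienting or de-orienting is irrelevant to the computation.

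Summing over the $a+b$ crossings gives $III=2a-2b$, whence $\tau(S,P)=\frac{III}{2}=a-b$ and $l(S,L)=a-b+w(P)-2lk(L)$. The only genuine difficulty is the local sign bookkeeping of the third paragraph: fixing orientations and an over/under convention at the half-twist and checking that both Type III crossings there carry the sign dictated by the A/B label; everything else is the cited Proposition together with the observation that a layered surface is flat away from its crossing bands.
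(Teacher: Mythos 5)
Your proposal follows essentially the same route as the paper: reduce to the earlier formula $\tau(S,P)=\tfrac{III}{2}$, observe that the relevant crossings of $L$ with its parallel curve are concentrated at the crossing bands, and check that each band contributes two crossings of a common sign determined by its A/B label, giving $III=2a-2b$. The one place where your argument is too quick is the claim that away from the crossing bands $\hat L$ ``meets $L$ in no crossing at all'': because nested circles are filled by disks at different heights, the surface must bend where a strand of $L$ passes from the boundary of one disk toward a band attached at a different level, and at such a bend $L$ and its parallel curve do cross. The paper handles this by noting (Figure \ref{Fi:bend}) that these extra Type III crossings occur in cancelling $\pm$ pairs, so they contribute nothing to $III$; with that correction your count, your sign analysis, and your trefoil sanity check all stand.
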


\begin{proof}

Recall that $\tau(S,P)$ can be calculated just by counting the net total of positive and negative crossings between $L$ and $S \cap \partial N(L)$ through annuli in $S \cap N(L)$.  If $S$ is layered, we may have some pairs of crossings that appear as in Figure \ref{Fi:bend}, but each pair will have no net effect on $\tau(S,P)$.

\begin{figure}
\begin{center}
	\scalebox{.7}{\includegraphics{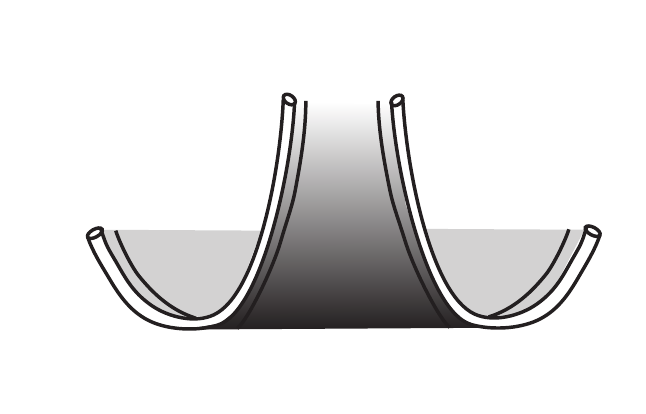}}
	\caption{The positive and negative crossings between $L$ and the parallel curve on $S$ cancel each other out wherever $S$ takes this form.}\label{Fi:bend}
\end{center}
\end{figure} 

Since $S$ is layered, the only other crossings between $L$ and $S \cap (\partial N(L))$ through annuli in $S \cap N(L)$ will occur at crossings (exactly why this is true will become more apparent when we introduce the Menasco Structure, although it should seem believable from our pictures so far).  The neighborhood of each A-split crossing in $P(L)$ will contain two positive crossings between $L$ and $S \cap (\partial N(L))$ through an annulus in $S \cap N(L)$, and the neighborhood of each B-split crossing will contain two negative crossings of this type.  Thus, $\tau=\frac{2a-2b}{2}=a-b$.  That $l(S,L)=a-b+w(P)-lk(L)$ then follows from the definition of $\tau(S,P)$.
\end{proof}

Note that a  layered surface constructed from an $n$-crossing projection in which the crossing splits produce $f$ circles will have an Euler Characteristic of $f-n$.  

Suppose we want to find a minimal genus layered surface for a given projection.  We could compare the genera of all $2^n$ possible surfaces, but we would like to do better than this.  Instead, we can employ the following:

\vspace {1 in}

 \underline{Minimal Genus Algorithm}

(1) Find the smallest $m$ for which the projection contains a projection $m$-gon.  

(2a) If $m \leq 2$, choose a projection $m$-gon and split each crossing on this region's boundary so that this region becomes a circle.  

(2b) If $m>2$ (and therefore $m=3$, by a simple Euler Characteristic argument on the projection plane), choose a projection $3$-gon.  From here, create two branches of our algorithm, each of which will ultimately yield a layered surface.  We will later choose to follow the branch that produced the smaller genus surface and to ignore the other.  For one of these branches, we split each crossing on this projection $3$-gon's boundary so that it becomes a circle.  For the other branch, we split each of these three crossings the opposite way.  

(3) Repeat until each branch reaches a projection without crossings.  Of all constructed surfaces, choose the one with the smallest genus.

\begin{theorem}

The Minimal Genus Algorithm always generates a minimal genus layered surface spanning a given alternating link.

\end{theorem}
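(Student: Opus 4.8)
The plan is to recast the statement as a purely combinatorial optimization over crossing states and prove it by induction on the number of crossings. Recall that a layered surface built from an $n$-crossing projection whose crossing splits produce $f$ circles has $\chi = f - n$, and that for any surface spanning $L$ the genus used here (ordinary genus when orientable, half the crosscap number when not) equals $(2 - \chi - b)/2$, with $b$ the number of components of $L$ and hence fixed. So minimizing genus among layered surfaces of a fixed projection is exactly maximizing $f$, and orientability plays no role: it suffices to show the algorithm produces a state with the largest possible number of circles. I would induct on $n$; the case $n = 0$ is immediate, and split circles in a disconnected projection may be set aside, each contributing one disk to every state.

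For the inductive step, let $m$ be smallest such that an $m$-gon $R$ occurs, so $m \le 3$ by the stated Euler-characteristic count on $S^2$. Each move the algorithm makes --- isolating $R$ when $m \le 2$, or, when $m = 3$, splitting a triangle's three crossings either so as to isolate it or in the opposite way --- drops the crossing number by $m$ and produces a projection that is again alternating; this last point follows from the characterization ``alternating $\iff$ one of the two checkerboard colors is, at every crossing, the color of both $A$-regions'', which survives smoothing a crossing. Hence, by the inductive hypothesis, each branch of the algorithm recursively outputs a layered surface of maximal circle count among the states of the original projection that make that branch's splits at $R$. The theorem then follows from two lemmas: (i) some maximal-circle state isolates any prescribed monogon or bigon; and (ii) some maximal-circle state is constant (all-$A$ or all-$B$) on the three crossings of any prescribed triangle --- noting that ``isolate the triangle'' and ``opposite'' are precisely the two constant choices there. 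Lemma (i) says the lone move for $m \le 2$ loses nothing, and (ii) says taking the better of the two branches for $m = 3$ loses nothing.

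To prove (i) and (ii), let $R$ be the prescribed gon and fix any state $\sigma$ of all crossings not on $\partial R$. Cutting $S^2$ along $\partial R$ separates off a disk containing exactly those crossings, and $\sigma$ resolves that disk into some fixed number $L_\sigma$ of closed loops together with a planar perfect matching $\pi_\sigma$ of the $2m$ free ends on $\partial R$. The circle count of the full state equals $L_\sigma$, plus the number of cycles formed by composing $\pi_\sigma$ with the matching of those free ends determined by the splits made at the crossings of $\partial R$, plus $1$ if those splits isolate $R$. Because $R$ is monochromatic for the checkerboard coloring --- the sole place the alternating hypothesis enters --- that latter matching is one of only a handful of explicit ``gadgets'', read off from the fact that at each crossing where $R$ keeps a corner exactly one of the two splits joins the two $R$-edges meeting there. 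A short check then finishes: a composition of two perfect matchings on $2m \le 6$ points has at most $m$ cycles, with equality only when the matchings coincide, and in the one borderline configuration the extra circle contributed by the isolating gadget makes up the difference. So the isolating split for a monogon or bigon, and one of the two constant choices for a triangle, always realizes the maximum.

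The main obstacle is lemma (ii): it is the only step at which a ``mixed'' choice on the triangle could conceivably beat both pure ones, and it is false without the alternating hypothesis, so the monochromaticity of the triangle must be used with care. There are eight states on a triangle, four types up to the evident symmetry, and the tight case is when $\pi_\sigma$ coincides with the gadget matching of a single-flipped state, where the all-$A$/all-$B$ choice merely ties. Lemma (i) is the same computation on two or four points and is easy, and the remaining points --- the reduction to circle counting, the preservation of alternation under the moves, and the termination of the finitely branching algorithm --- are routine.
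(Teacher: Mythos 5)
Your proposal is correct and follows essentially the same route as the paper: induction on crossing number, the reduction of genus-minimization to maximizing the number of state circles, and the same two key local facts --- that isolating a $1$- or $2$-gon never decreases the circle count, and that for a $3$-gon one of the two ``constant'' resolutions realizes the maximum over all eight choices. The only real difference is in how those local facts are verified: the paper checks them by the case-analysis pictures of Figures \ref{Fi:bigon} and \ref{Fi:triangle}, whereas you check them by composing the inner ``gadget'' matching with the outer non-crossing matching $\pi_\sigma$ and counting cycles, which is a tidier and fully rigorous version of the same computation.
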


Note, however,  that the resulting surface is not necessarily uniquely determined by this algorithm.  

\begin{proof}

(By induction on crossings in $P$.) The statement is trivial for zero crossings.  Suppose it is true for fewer than $n$ crossings, and let $P$ be an $n$-crossing projection.  Suppose that performing this algorithm on $P$ generates a layered surface $S$ spanning $L$.  Also, let $T$ be a minimal genus layered surface spanning $L$; then $g(T) \leq g(U)$ for any layered surface $U$ spanning $L$.  We want to show that $g(S) \leq g(T)$, or equivalently that $f(S) \geq f(T)$, where $f$ is the number of circles used in the construction of each respective layered surface.

First suppose that $P$ contains a projection $1$-gon or $2$-gon.  Let $G$ be the one around which all crossings are split in the construction of $S$, via our algorithm.  Also, suppose for contradiction that $f(S) < f(T)$.  We construct a new layered surface, $T'$, with the exact same crossing splits as in the construction of $T$, except that the crossing splits around $G$ coincide with the corresponding crossing splits for $S$.  Note that if the crossing splits of $S$ and $T$ already coincide here, we will have $T = T'$.  Figure \ref{Fi:bigon} demonstrates that $f(T') \geq f(T)$. Note that the first, third and fourth cases depicted cause a decrease in the number of circles if the crossing splits are switched, while in the second case, the number of circles may either decrease or be preserved.

We then have an $(n-1)$- or $(n-2)$-crossing projection where our algorithm yields $f(S)$ circles and where the construction of $T'$, another layered surface, yields a projection of $f(T') \geq f(T)>f(S)$ circles.  This contradicts our inductive hypothesis.

\begin{figure}
\begin{center}
	\scalebox{.5}{\includegraphics{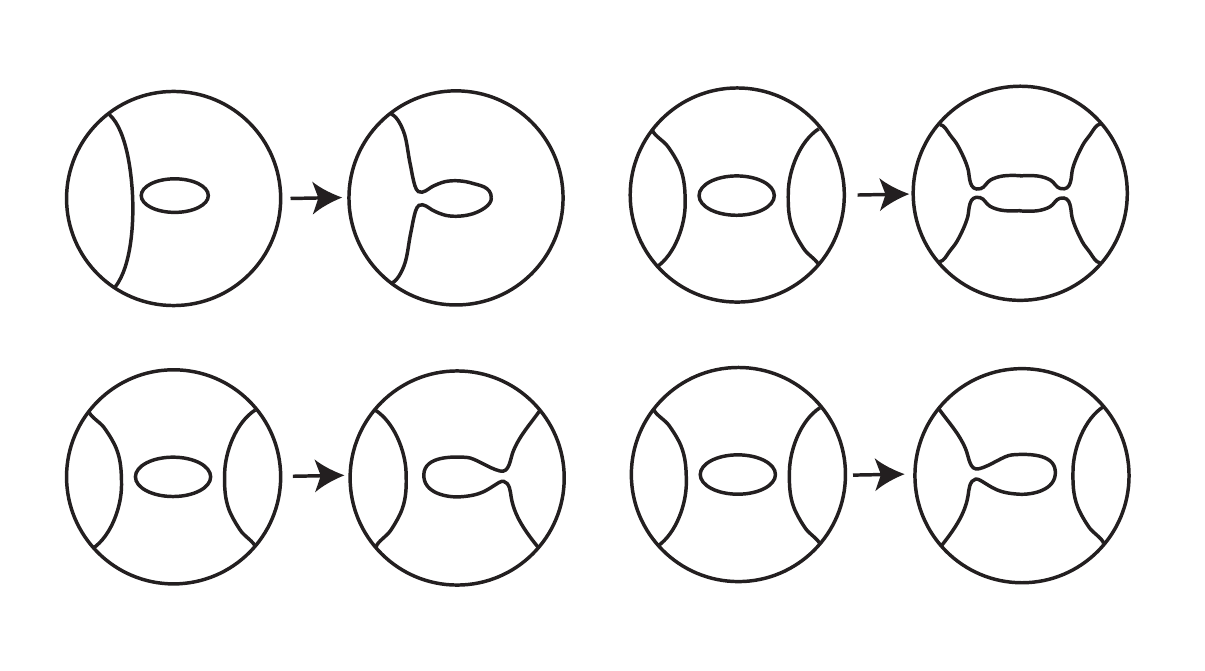}}
	\caption{Changing the crossing splits around a split $1$-gon or split $2$-gon  in our algorithm cannot increase the number of circles.} \label{Fi:bigon}
\end{center}
\end{figure}

Assume then that $P$ contains no projection $1$-gons or $2$-gons.  Let $G$ be the projection $3$-gon around which we first split crossings in the construction of $S$.  Let $P_1$ be the projection obtained from $P$ by splitting the crossings around $G$ so that $G$ becomes a circle.   Let $P_2$ be the projection obtained from $P$ by splitting each of these three crossings the opposite way.  $P_1$ and $P_2$ both have $n-3$ crossings.  Let $S_1$ and $S_2$ be the layered surfaces our algorithm generates from these respective projections.  Note that $f(S)=max\{f(S_1),f(S_2)\}$.  Let $T_1$ and $T_2$ be layered surfaces generated from $P_1$ and $P_2$, respectively, with crossing splits agreeing with those of $T$.  Our inductive hypothesis implies that $f(S_1) \geq f(T_1)$ and $f(S_2) \geq f(T_2)$. Figure \ref{Fi:triangle} shows that either $f(T_1) \geq f(T)$ or $f(T_2) \geq f(T)$.  It follows that $f(S) \geq f(T)$. 

\begin{figure}
\begin{center}
\scalebox{.5}{\includegraphics{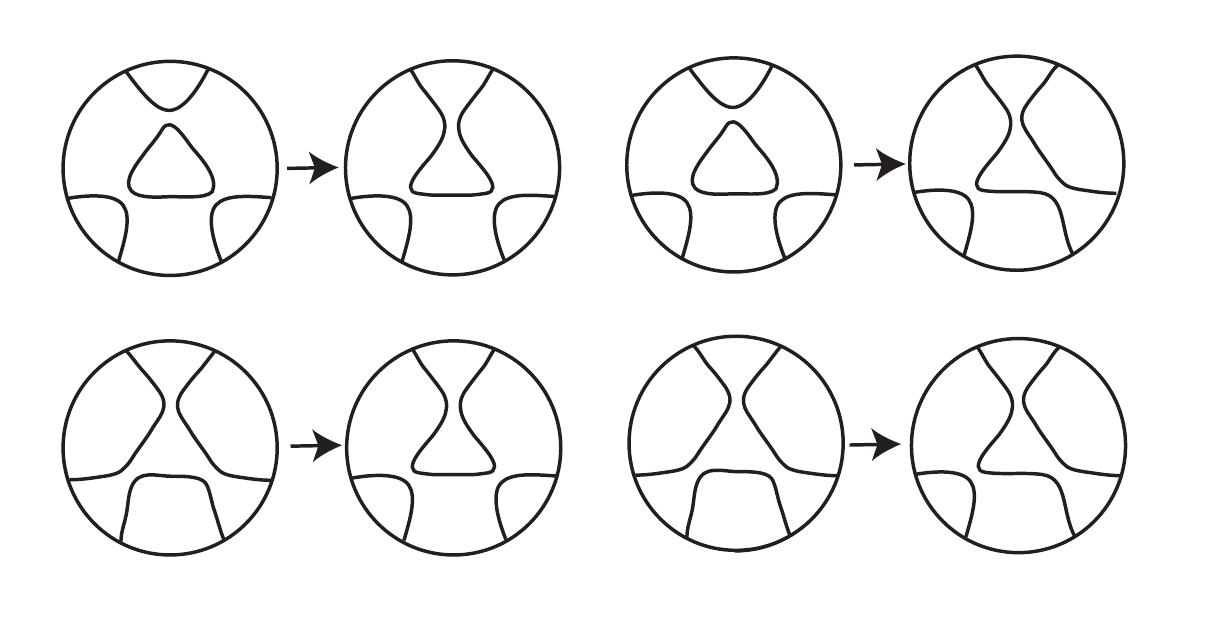}}
	\caption{Changing the crossing splits around a split $3$-gon from our algorithm cannot increase the number of circles for \emph{both} possible initial splittings. }\label{Fi:triangle}
\end{center}
\end{figure} 

Therefore our algorithm generates a minimal genus layered surface.  Theorem \ref{main} will imply that this surface is in fact a minimal genus spanning surface (layered or non-layered) for our link.  
\end{proof}

For Theorem \ref{main}, we extend the class of allowed surfaces in  
a manner that increases genus and perhaps changes boundary slope.  To do  
this, we allow the addition of \textbf{crosscaps} and \textbf 
{handles} to a basic layered surface.  These additions can be thought of as infinitesimal, local processes that take place in some isolated, unimportant (away from the crossings) part of the surface along the boundary.  These processes are depicted in Figure \ref{Fi:crosscap}.  

\begin{figure}
\begin{center}
	\scalebox{.5}{\includegraphics{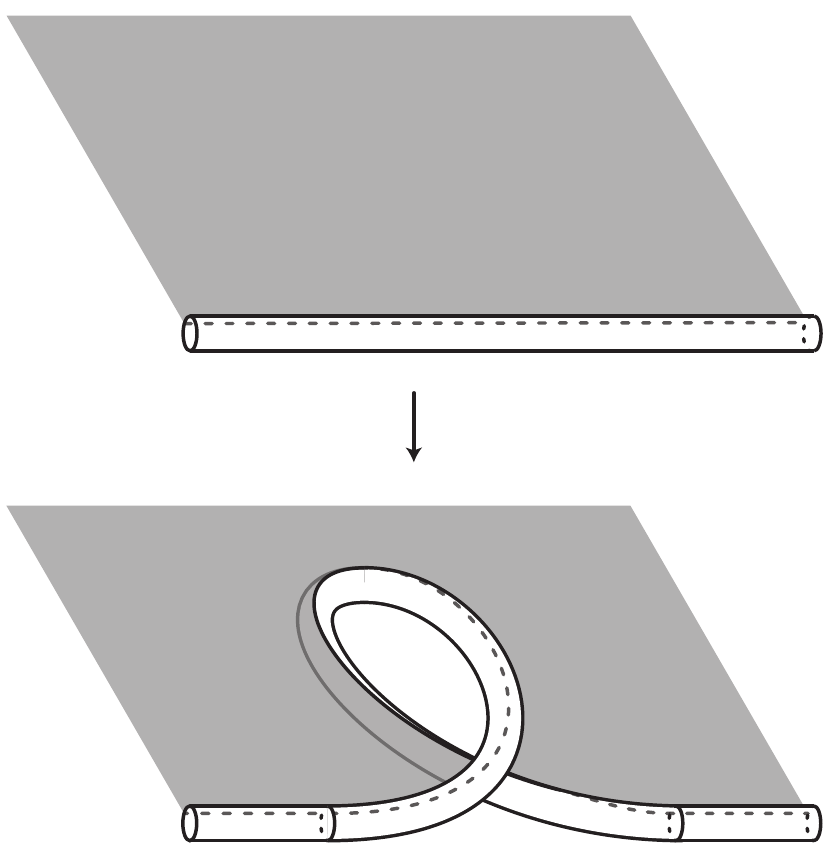}}
	\scalebox{.5}{\includegraphics{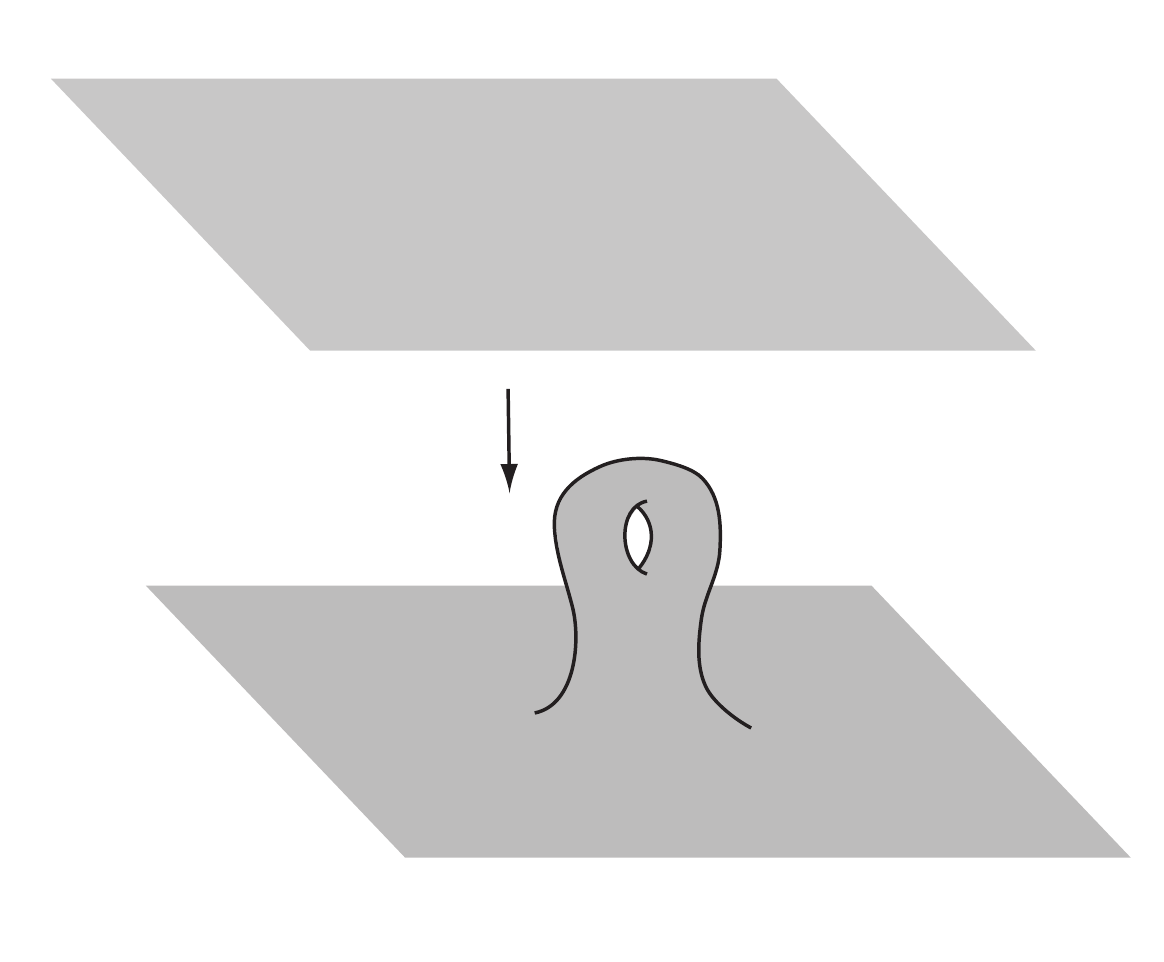}}
	\caption{Adding a crossscap (left) and adding a handle (right).}\label{Fi:crosscap}
\end{center}
\end{figure}

Neither adding a crosscap nor adding a handle need alter the projection $P(L)$. We can see this in the picture above for the handle, and in the later picture (see Figure \ref{Fi:crosscap2}) of adding crosscaps to a surface in Menasco form.  Adding a crosscap will either increase or decrease the twist (and therefore the aggregate slope) by two and will decrease the Euler Characteristic by one.  Adding a handle will decrease the Euler Characteristic by two and will not change twist or slope.  Adding a crosscap or a handle will necessarily produce an inessential surface. 

Note that a minimal nonorientable genus spanning surface need not be essential. It can be the case that the Minimal Genus Algorithm generates only orientable surfaces, in which case the minimal nonorientable genus surface is obtained from one of these by adding a crosscap, and the nonorientable genus is 1/2 greater than the orientable genus. (The knot $7_4$ is an example of this.) But this is as much greater than the orientable genus as the nonorientable genus can ever be.

 We are now ready to develop the primary piece of machinery that we use toward the proof of  Theorem \ref{main}.

\section{Menasco Structure}

We utilize William Menasco's geometric  
machinery for analyzing alternating links as first appeared in \cite{M1}. A variety of results have been proved using this technique, including the fact that any two reduced alternating projections of a given link are related through flyping(\cite{M4} and \cite{M6}), an alternating link is splittable if and only if its alternating projections are disconnected, and an alternating knot is composite if and only if it is obviously composite in any reduced alternating projection( \cite{M1}). Additional results for alternating links and extensions of alternating links have also been proved(e.g. \cite{A1}, \cite{A2}, \cite{A3}, \cite{A4}, \cite{C}, \cite{M4}, \cite{M5}, \cite{Ts}). We will use the Menasco machinery to prove that a spanning surface must have an arc whose neighborhood in $S$ can be isotoped into a``crossing band". This will allow an inductive argument to follow.)

\subsection {Background}

 Begin with a reduced, alternating  
projection on a sphere.  From this projection, create an embedding of  
$L$ that lies on a sphere, $S^2 \subset S^3$, except in the neighborhood of each crossing, where we insert a ball so that the over-strand can run over this ball and the under-strand can run under it.  We will call every such crossing ball a \textbf{Menasco ball}.  This will give rise to a picture as in Figure \ref{Fi:menasco} on the left  and a view from above as on the right.  We will assume that $L$ is not splittable.  Otherwise, it would appear obviously so in the projection (see \cite{M1}) and we could consider each non-splittable component separately.

\begin{figure}
\begin{center}
	\scalebox{.5}{\includegraphics{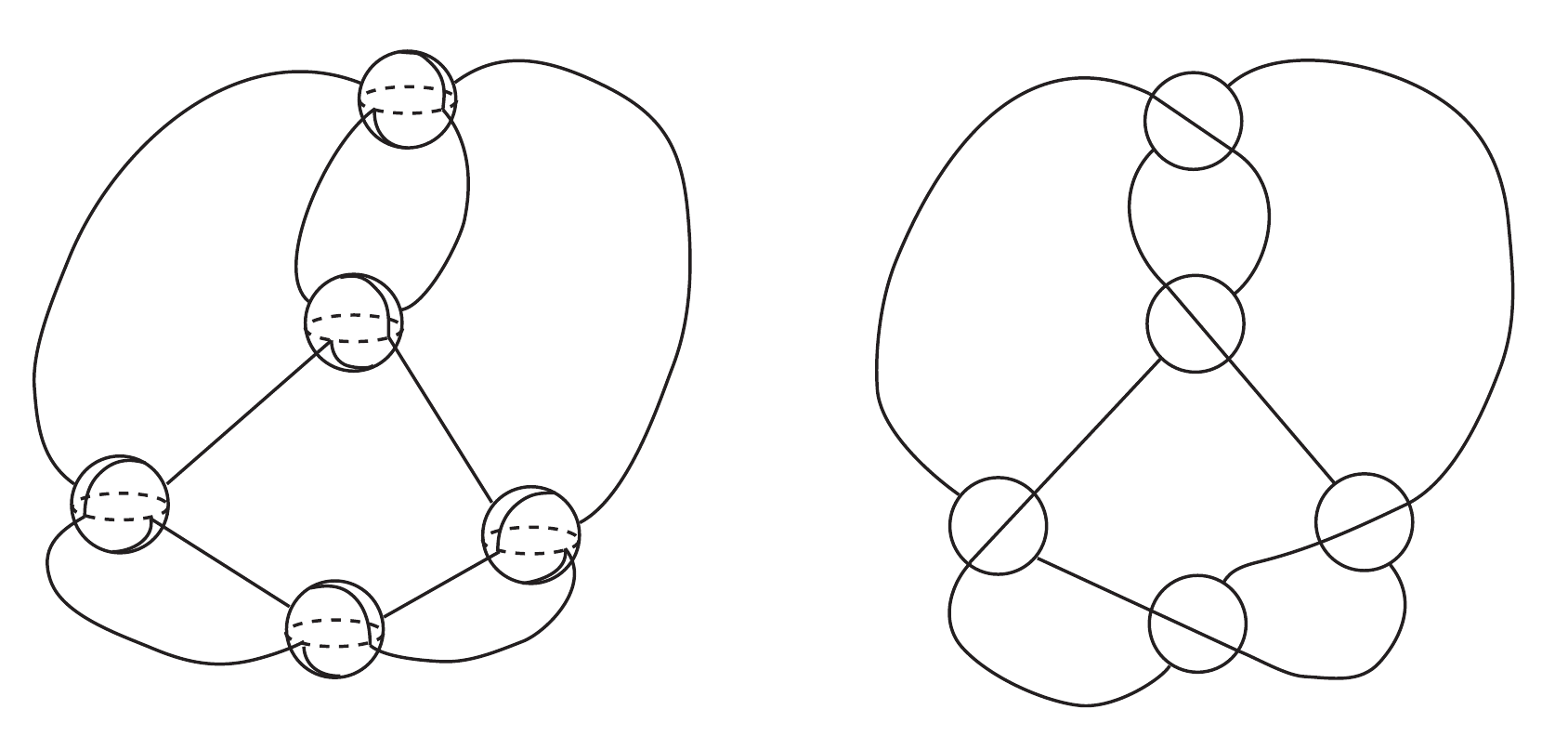}}
	\caption{A embedding of the $5_2$ link in Menasco form.}\label{Fi:menasco}
\end{center}
\end{figure} 

We will call such an embedding of $L$ relative to $S^2$ a \textbf 
{Menasco projection} $P$ with $n$ crossings.  We denote the union of the collection of Menasco balls by $M=\cup_{i=1}^{n} M_i$ and we let $F=S^2 \backslash $\textit{\r{M}}.

Now suppose we have an incompressible (but not necessarily boundary-incompressible) surface $S$ spanning $L$.  By general position, we can isotope $S$ so that $S \cap F$ is a collection of  simple closed curves and arcs with each endpoint either on $L$ or on the equator of a Menasco ball.  We can also choose this isotopy so that $S$ is disjoint from the interiors of the Menasco balls, wherever possible, including everywhere along $N(L)$.  That is, $S \cap N(L) \cap M = \varnothing$. Note that later, we will qualify this requirement to allow intersections of $S$ with $N(L) \cap M$ called ``crossing bands''. 

 We assume that $N(L)$ is small relative to the Menasco balls. Utilizing the incompressibility of $S$, we can isotope $S$ to eliminate all simple closed curves in $S \cap F$. 

Menasco shows that once we isotope $S$ in this way, anywhere it is \emph{still forced} to intersect the interior of a Menasco ball it must do so in a saddle as in Figure \ref{Fi:saddle}. 

\begin{figure}
\begin{center}
	\scalebox{.8}{\includegraphics{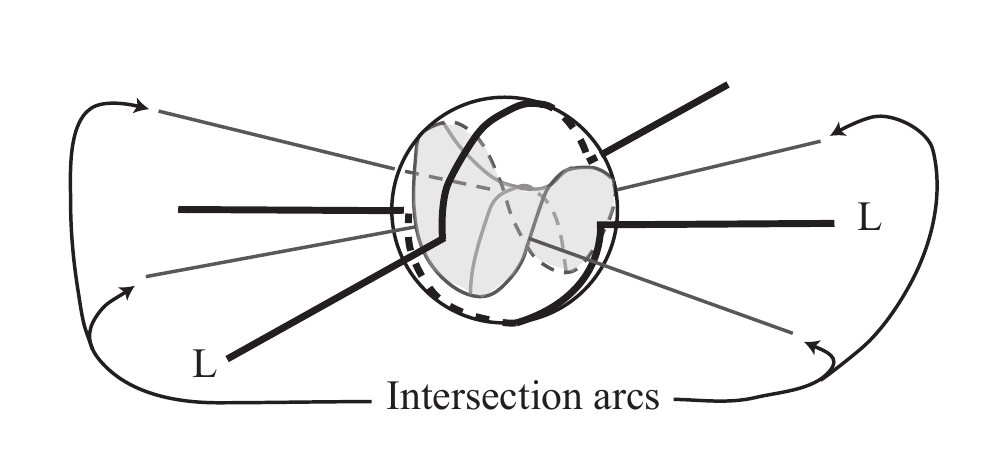}}
	\caption{The surface intersects each Menasco ball in a possibly empty collection of saddles. }\label{Fi:saddle}
\end{center}
\end{figure} 

 We define an \textbf{intersection arc} to be an arc of $S \cap F$, with each endpoint lying either on the link or on the equator of a Menasco ball.  Each intersection arc lies entirely  
within a single connected region in $F \backslash P$. 

Then, for each Menasco ball $M_i$, each of the four regions of $F \backslash P$ adjacent to $M_i$ will have the same number of endpoints of intersection arcs on $\partial M_i$, that number being the number of saddles. 
Define $F_+$ to be the union of $F$ with the upper hemispheres of the Menasco balls and $F_-$ to be the union of $F$ with the lower hemispheres. Then at a Menasco ball with a single saddle, the intersection curves of $S$ with $F_+$ and $F_-$ appear as in Figure \ref{Fi:menview}. 

\begin{figure}
\begin{center}
	\scalebox{.5}{\includegraphics{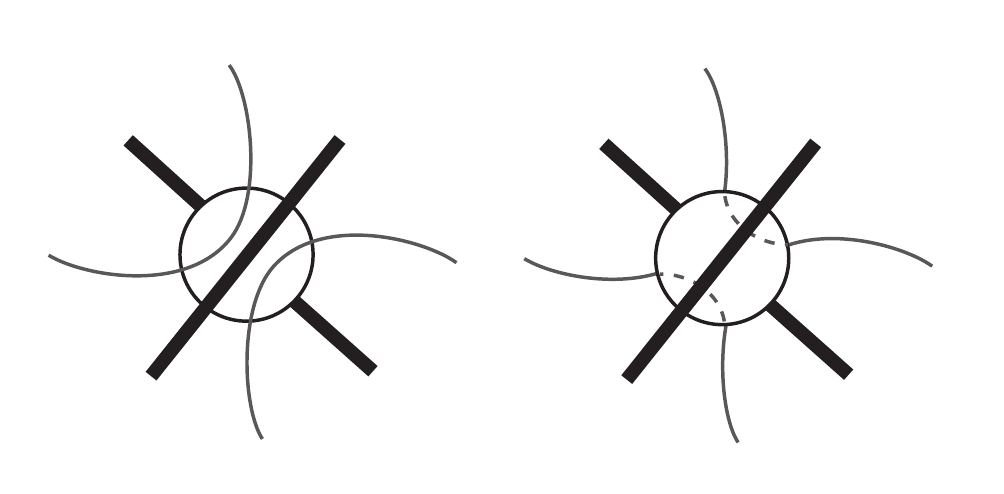}}
	\caption{Pictures of the arcs of intersection between $S$ and $F_+$, and between $S$ and $F_-$.}\label{Fi:menview}
\end{center}
\end{figure}  

Since $S$ is incompressible, we then see that $F \cup \partial M$ cuts  $S$ up into a collection of disks, some inside the sphere, some  
outside the sphere, and some inside Menasco balls.  We will call these \textbf{under-disks}, \textbf{over-disks}, and \textbf{saddles}, respectively. Up to isotopy, $S$ is then uniquely determined by the way it intersects $F$.   A final important fact from \cite{M1} is that no intersection arc has both endpoints on $\partial M_i$ for any Menasco ball $M_i$.  Otherwise, it could easily be removed.

We call an incompressible surface that has been isotoped in this manner to be \emph{in Menasco form}.
Figure \ref{Fi:mensurface}  depicts a layered surface for the $5_2$ knot that has been isotoped to be in Menasco form. We see $F_+$ with the over-disks shaded and the under-disks labelled $U_i$. Note that all layered surfaces can be isotoped to be in Menasco form without any saddles.

\begin{figure}
\begin{center}
	\scalebox{.6}{\includegraphics{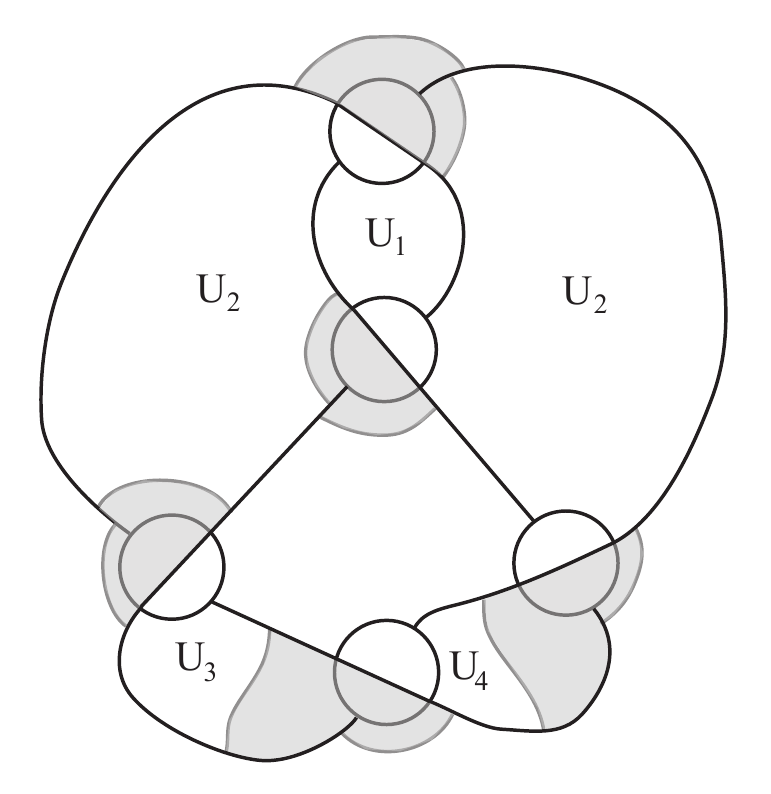}}
	\caption{The view of $F_+$ depicting a layered surface spanning the $5_2$ knot, isotoped to lie in Menasco form, with over-disks shaded and under-disks labelled.}\label{Fi:mensurface}
\end{center}
\end{figure}

\subsection{Cleaning}

In this section, we describe some means to simplify surfaces in Menasco form.  In general, we want to minimize the number of intersection arcs between $S$ and $F$, so several of our results will describe ways of doing this.  This will generally involve isotoping $S$ or altering it in some other well-defined way to eliminate particular types of arcs of intersection, or else showing that certain types of arcs of intersection simply cannot occur.  As before, we call our spanning surface $S$, our link $L$, our Menasco projection $P$ with $n$ crossings, our projection surface $S^2$, and our collection of Menasco balls $M$.  Also, as before, we let $F=S^2 \backslash $\textit{\r{M}} be our projection surface. 

Given an intersection arc, there exists an over-disk and an under-disk, each of which contains that arc on its boundary.  Additionally, each point on $L$ lies on the boundary of either an over-disk  or an under-disk, with each intersection arc endpoint on $L$ marking the transition between the boundary of an over-disk  and the boundary of an under-disk. Each intersection arc endpoint on a  Menasco ball boundary also lies on the boundary of a saddle.  Every point on the boundary of an over-disk or under-disk lies on an intersection arc, on the boundary of a Menasco ball, or on $L$.  For the proof of Theorem \ref{main}, we heavily utilize this alternation between over-disks and under-disks.

 In accordance with this alternation of over-disk  and under-disk  boundaries on $L$, where `+' denotes a portion of the boundary of an over-disk  and `$-$' denotes a portion of the boundary of an under-disk, and  transitions occurring exactly at the endpoints of intersection arcs, we can see that every intersection arc with endpoints on $L$  will take one of the forms shown in Figure \ref{Fi:arcs}.

\begin{figure}
\begin{center}
	\scalebox{.5}{\includegraphics{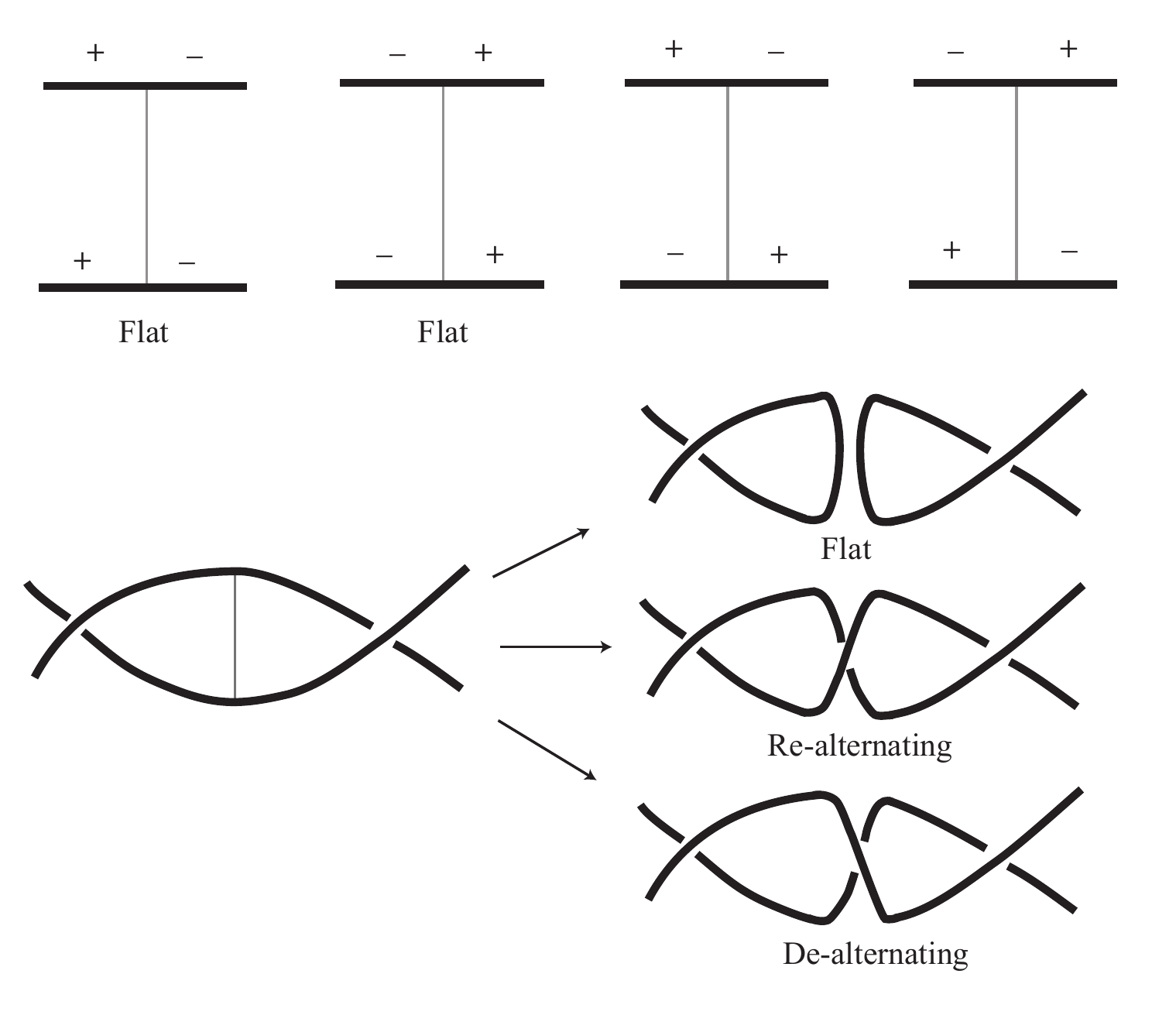}}
	\caption{Four possibilities for intersection arcs and the possible results from cutting along an arc. }\label{Fi:arcs}
\end{center}
\end{figure}  

If we imagine cutting $S$ along any of the four types of arc, by removing a neighborhood of this arc from $S$,  a new surface $S'$ is generated,  bounding a new link $L'$, which will inherit a projection $P'$.  If we cut along either of the first two types of arcs, $P'$ will still be alternating with $n$ crossings.  We call these first two \textbf{flat} intersection arcs.  If we cut along either of the latter two types of arcs, $P'$ will have $n+1$ crossings.  For one of these cases, $P'$ will no longer be alternating, while for the other it will be.  We will call the former arc a \textbf{de-alternating} intersection arc and the latter a \textbf{re-alternating} intersection arc.  Every intersection arc with both its endpoints on $L$ will be flat, de-alternating, or re-alternating, as in Figure \ref{Fi:arcs}.

Among all embeddings of $S$ relative to $F \cup M$ that preserve the  
already established structure (Menasco projection, general position, $S$ disjoint from $M$ wherever possible), we want to choose the \emph{best} one.  In particular, we want to \emph{lexicographically minimize} the number of intersection arcs and saddle-disks.  That is, we want to minimize the number of intersection arcs, and once we have done this we want to minimize the number of saddle-disks as well.  If an embedding of $S$ relative to $F \cup M$ does this, we will say that it is \textbf{clean}. 

\begin{prop} \label{Prop:clean0}
In a clean embedding, no adjacent pair of intersection arc endpoints on $P \cap F$ will connect to intersection arcs that both lie in the same region of $F \backslash P$, as in Figure \ref{Fi:cleaning1}.
\end{prop}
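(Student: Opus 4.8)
The plan is to argue by contradiction: if some edge of $P\cap F$ carried such a pair in a clean embedding, I would isotope $S$ so as to strictly lower the number of intersection arcs, contradicting cleanness. So suppose that on an edge $e$ of $P\cap F$ there are adjacent intersection-arc endpoints $x$ and $y$ whose arcs $\alpha\ni x$ and $\beta\ni y$ both lie in the region $R$ of $F\setminus P$ on one side of $e$, and let $\delta\subset e$ be the sub-arc between $x$ and $y$.

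The first step is to locate the disks of $S$ involved. Since $x$ and $y$ are adjacent and $\delta$ meets no Menasco ball, and since transitions between over-disk and under-disk boundary along $L$ occur only at intersection-arc endpoints, $\delta$ lies on the boundary of a single disk $D$ in the decomposition of $S$ by $F\cup\partial M$; say $D$ is an over-disk, the under-disk case being its mirror image. Then $\alpha$ and $\beta$ both occur on $\partial D$, consecutively with $\delta$, so $\gamma:=\alpha\cup\delta\cup\beta$ is an embedded sub-arc of $\partial D$, and the interior of $D$ is disjoint from $S^2\cup M$. Let $D^-_x$ and $D^-_y$ be the under-disks lying across $\alpha$ and $\beta$, respectively, from $D$.

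The second step is to produce a pushing disk. The arc $\gamma$ is properly embedded in the closed disk $\overline R$, since its endpoints other than $x$ and $y$ lie on $\partial R$, so $\gamma$ cuts $\overline R$ into two disks; let $\Delta$ be the one having $\delta$ in its boundary. I would then show, after possibly replacing $\alpha$ and $\beta$ by initial sub-arcs $\alpha'\subseteq\alpha$ and $\beta'\subseteq\beta$ from $x$ and $y$, that $\Delta$ may be taken with interior disjoint from $S$: any arc of $S\cap R$ meeting $e$ near $x$ but off $\delta$ is separated from $\delta$ by $\alpha$, and symmetrically near $y$, so no intersection arc can meet $\partial\Delta$ except $\alpha$ and $\beta$ themselves, and an innermost-disk argument inside $\Delta$ clears out any intersection arc buried in its interior.

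The last step is to sweep $S$ across $\Delta$. Because the interior of $\Delta$ misses $S$ and $\Delta$ lies on the $\delta$-side of both $\alpha$ and $\beta$, one can isotope the part of $S$ made up of $D$ together with collars of $\alpha$ and $\beta$ in $D^-_x$ and $D^-_y$ across $\Delta$ to the far side of $S^2$; after returning the result to Menasco form this merges $D$ with $D^-_x$ and $D^-_y$ along $\alpha$ and $\beta$, deleting both of these intersection arcs, while creating no saddles and no new intersection arcs. The number of intersection arcs then strictly decreases, contradicting cleanness; this is exactly the local modification drawn in Figure~\ref{Fi:cleaning1}. The step I expect to be the main obstacle is making this sweep rigorous --- in particular, confirming that re-isotoping the swept surface into Menasco form genuinely introduces no new intersection arcs, and that the move really removes $\alpha$ and $\beta$ rather than merely shortening them ---, since that is where the innermost-disk bookkeeping and the allowed isotopies of the Menasco structure all have to be checked together.
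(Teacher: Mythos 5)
Your overall strategy is exactly the paper's: the arc $\delta$ of $P\cap F$ between the two adjacent endpoints lies on the boundary of a single over- or under-disk $D$, and one isotopes $D$ through $F$ along $\delta$ to reduce the number of intersection arcs, contradicting cleanness. So the approach is right, and the paper's own proof is in fact terser than yours.

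The one step that does not hold up is your second one. You take $\Delta$ to be the entire subregion of $R$ cut off by $\alpha\cup\delta\cup\beta$ and assert that ``an innermost-disk argument inside $\Delta$ clears out any intersection arc buried in its interior.'' Incompressibility only lets you remove \emph{simple closed curves} of $S\cap F$; an intersection \emph{arc} lying inside $\Delta$ has its endpoints on $L$ or on a Menasco ball boundary and is a genuine feature of the embedding that cannot be isotoped away (indeed, most of the surrounding propositions are devoted to controlling exactly such arcs). Since $\partial\Delta$ can contain other edges of $P$ and other Menasco balls bounding $R$, such arcs can certainly be present. Fortunately this step is also unnecessary: because $x$ and $y$ are \emph{adjacent} endpoints, no arc of $S\cap F$ meets the interior of $\delta$, so a sufficiently thin collar of $\delta$ in $\overline{R}$ (bounded by $\delta$, initial segments of $\alpha$ and $\beta$, and an arc parallel to $\delta$) is automatically disjoint from the rest of $S$, from $L$, and from $M$, and pushing the corresponding collar of $D$ through it performs the required move with nothing to clear. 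Note also that this local move band-sums $\alpha$ and $\beta$ into a single intersection arc rather than ``deleting both'': the count of intersection arcs drops by one, not two, which is still enough to contradict cleanness.
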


\begin{figure}
\begin{center}
	\scalebox{.5}{\includegraphics{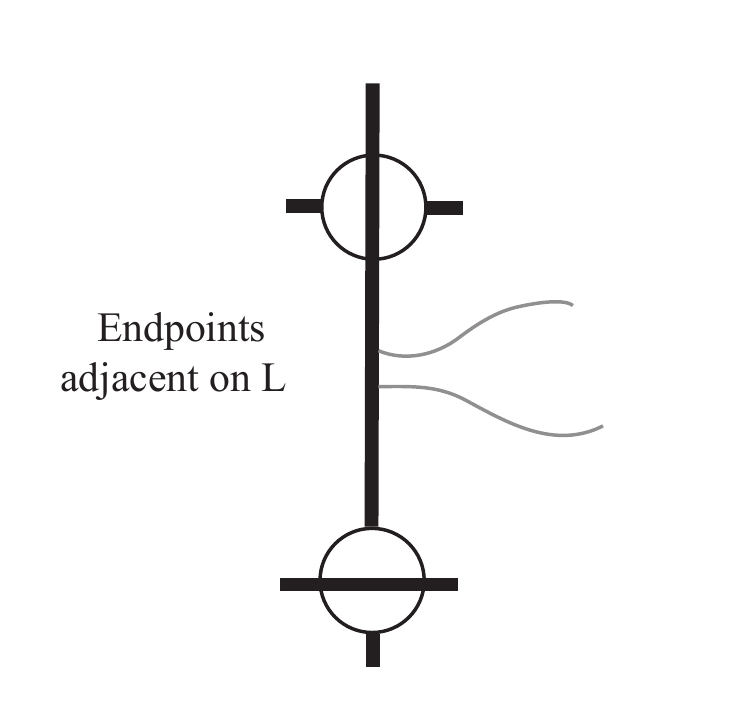}}
	\caption{No clean embedding of a surface in Menasco form contains this conformation.}\label{Fi:cleaning1}
\end{center}
\end{figure}  

\begin{proof}
Suppose our clean embedding contains such a pair of adjacent intersection arcs, as in \ref{Fi:cleaning1}.  The portion of $P \cap F$ connecting the two intersection arc endpoints lies on the boundary of either an over-disk  or an under-disk.    We can isotope this disk  through $F$ along the portion of $P \cap F$ that connects the two intersection arc endpoints.  Doing this will decrease the number of intersection arcs, contradicting the fact that our Menasco structure was clean.
\end{proof}

\begin{prop} \label{Prop:clean2}       
Suppose  $D$ is an over (under)-disk  whose boundary contains intersection arcs $I$ and $J$, both in some region $R \subset F \backslash P$ where $I$ and $J$ can be connected by an arc $\alpha \subset F \backslash (P \cup S)$.  Then, we can push a particular part of $D$ through $S^2 \cap N(\alpha)$ so that it is divided into two separate over (under)-disks.  If our original embedding of $S$ was clean, then this will be as well. 
\end{prop}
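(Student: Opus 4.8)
The plan is to realize the splitting of $D$ by a single isotopy of $S$ that pushes a band of $D$ across $S^2$, much like the disk-isotopies used in the proof of Proposition~\ref{Prop:clean0}.

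First I would locate $\alpha$ with respect to $D$. Within the open region $R$, the boundary of the shadow of $D$ in $F_+$ consists only of intersection arcs, since the arcs of $L$ and of $\partial M$ on $\partial D$ all lie on $\partial R$. As $\alpha$ meets neither $P$ nor $S$, it cannot cross that shadow's boundary, so, having both endpoints on $I,J\subset\partial D$, all of $\alpha$ lies beneath $D$. Lifting produces a properly embedded arc $\gamma\subset D$ from a point $p\in I$ to a point $q\in J$ with $\gamma\cap S^2=\{p,q\}$; since $D$ is a disk, $\gamma$ cuts it into disks $D_1$ and $D_2$.

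Next I would perform the push. Choose a product neighborhood $N(\alpha)\cong\alpha\times[-1,1]\times[-1,1]$ with $\alpha\times[-1,1]\times\{0\}=N(\alpha)\cap S^2$ and the last coordinate the signed height off $S^2$; since $\alpha\cap(P\cup S)=\partial\alpha$, inside $N(\alpha)$ the surface $S$ consists only of the part of $D$ lying over short collars of $I$ and $J$, and $N(\alpha)$ misses $M$. Letting $B\subset D$ be a regular neighborhood of $\gamma$, push the interior of $B$ down through $S^2\cap N(\alpha)$ until it lies just below $S^2$, and smooth. Now $F$ cuts the old $D$ along a curve close to $\gamma$: the two pieces remain above $S^2$ and are the two asserted over-disks, while the strip pushed below $S^2$ runs into, and amalgamates with, the under-disks lying just below $I$ near $p$ and below $J$ near $q$. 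At the same time $I$ and $J$ get severed at $p$ and $q$, and their four ends are rejoined — through the two arcs along which $S$ now crosses $S^2$ on the two sides of $\alpha$ — into two new intersection arcs with the same four endpoints on $L\cup\partial M$ as $I$ and $J$ had. Thus the number of intersection arcs is unchanged, and since nothing happened near any Menasco ball the saddle count is unchanged too; one also checks that $P$ is untouched, that $S$ is again in general position with $S\cap F$ a union of arcs and disjoint from $M$ wherever possible, and (by incompressibility) that no closed curve need appear in $S\cap F$. Hence a clean embedding is carried to a clean one.

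I expect the real obstacle to be the local bookkeeping at the two ends: one has to arrange the push so that the two new crossing arcs fuse smoothly with the severed halves of $I$ and $J$, producing two honest intersection arcs with no new four-valent point of $S\cap F$ at $p$ (rather than leaving four arcs meeting there), and one has to deal with the case in which the two under-disks being amalgamated coincide — which would create an annular component, and which I would either rule out in a clean embedding or circumvent with the mirror version of the push (there being two mirror local models at each end, exactly as for the meridianal boundary-compression picture). The rest is a routine verification that the standing hypotheses on the embedding survive the isotopy.
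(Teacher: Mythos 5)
Your isotopy is the same one the paper performs --- pushing a band of $D$ through $S^2$ along $\alpha$, with bookkeeping showing the arc count is preserved --- but your justification that this push is actually available has a gap. The paper takes an arc $\beta\subset D$ with the same endpoints as $\alpha$, spans $\alpha\cup\beta$ by a disk $X$ whose interior misses $L$ and whose interior meets $S$ only in simple closed curves, removes those curves using the incompressibility of $S$, and only then slides $N(\beta)$ through $N(X)$ and across $S^2\cap N(\alpha)$. You replace this with the claim that $\alpha$ lies directly beneath $D$ and that the region swept by a vertical push is automatically empty. Neither half of that claim is established. First, $\alpha$ need not lie in the shadow of $D$: at its endpoint $p\in I$ the arc may leave to either side of $I$, and the side opposite the shadow of $D$ is equally consistent with the hypotheses; your observation that $\alpha$ cannot cross the shadow's boundary only shows it stays in one complementary component, not which one. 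Second, even when $\alpha$ does lie beneath $D$, the fact that the interior of $\alpha$ misses $P\cup S$ controls $S$ only on the projection sphere; it says nothing about sheets of $S$ (nested over-disks, for instance) lying in the three-dimensional slab between $D$ and $S^2\cap N(\alpha)$ through which your band must travel. Clearing that track is exactly what the paper's incompressibility step accomplishes, and your argument has no substitute for it.

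The difficulties you flag at the end are real but secondary: the endpoint bookkeeping is routine, and the possibility that the two under-disks absorbed at $p$ and $q$ coincide (so that the pushed band closes up into a non-disk piece below $F$) is a point the paper's own proof also passes over in silence. The essential missing ingredient is the use of incompressibility to empty the disk bounded by $\alpha$ and its companion arc in $D$ before performing the push.
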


\begin{proof}
See Figure \ref{Fi:cleaning2}. Suppose we have such a $D$, $R$, $I$, $J$, and $\alpha$.  Let $\beta \subset D$ be an arc with the same endpoints as $\alpha$.  Then $\alpha \cup \beta$ bounds a disk  $X$ such that $N(\partial X) \cap S  \cap$ \textit{\r{X}}$= \varnothing$ and $L \cap$  \textit{\r{X}} $= \varnothing$.  Therefore \textit{\r{X}}$\cap S$ will consist entirely of simple, closed curves of intersection, each of which can be removed because $S$ is incompressible.  Therefore, we can assume that \textit{\r{X}}$\cap S = \varnothing$.  We can then isotope $N(\beta)$ through $S^2 \cap N(\alpha)$ via $N(X)$, so that $D$ is divided into two separate over-disks, as desired.

Since the number of intersection arc endpoints does not change, the number of intersection arcs will not change.  Also, $S$ was originally in Menasco form, and we have done nothing to change this.  If the original embedding was clean, this will be as well.  
\end{proof}

\begin{figure}
\begin{center}
	\scalebox{.5}{\includegraphics{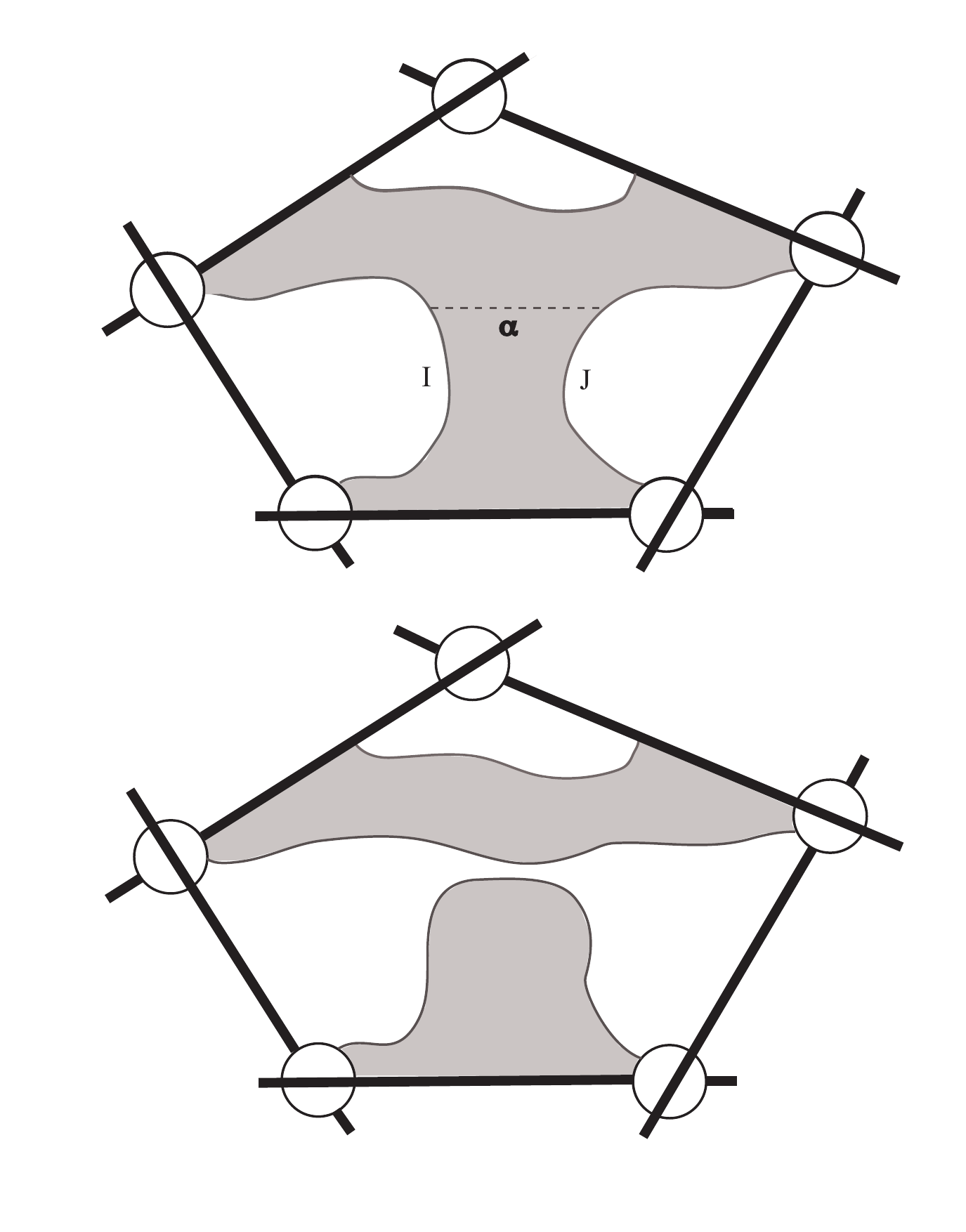}}
	\caption{Pushing the disk  $D$ through the projection plane.}\label{Fi:cleaning2}
\end{center}
\end{figure}  

Note that this move is \emph{not} a simplification, but rather a convenient freedom.  This convenience will soon become manifest.

\begin{prop}  \label{prop:cleaner}      

Suppose $S$ is in a clean embedding, and $D$ is either an over-disk  or an under-disk  whose boundary contains intersection arc $I$ in some region $R \subset F \backslash P$ where an endpoint of $I$ lies on a Menasco ball $M_i$.  Then $\partial D$ will not contain either portion of $L$ that traverses $M_i$, as in the top of Figure \ref{Fi:cleaning4}.

\end{prop}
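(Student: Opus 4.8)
The plan is to argue by contradiction against cleanness. Suppose $S$ is clean but that $\partial D$ does contain one of the two arcs of $L$ passing through $M_i$. If $D$ is an over-disk, then $\partial D \subset F \cup(\text{upper hemispheres of the }M_j)$, so it cannot contain the under-strand at $M_i$, which lies on the lower hemisphere of $\partial M_i$; symmetrically for under-disks. Hence (reflecting in $S^2$ if necessary, which interchanges $F_+$ with $F_-$, over-disks with under-disks, and over-strands with under-strands, and preserves cleanness) we may assume $D$ is an over-disk and that $\partial D$ contains the over-strand $\ell$ of $L$ at $M_i$ --- the configuration drawn in the top of Figure \ref{Fi:cleaning4}. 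I would then exhibit a modification of $S$, supported near $M_i$, that strictly lowers the pair $(\#\text{ intersection arcs},\, \#\text{ saddles})$ in lexicographic order, contradicting cleanness.

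First I would pin down the local picture on the upper hemisphere $H$ of $\partial M_i$. The arc $\ell$ cuts $H$ into two half-disks, and since $D$ is a single disk of $S$ cut along $F\cup\partial M$ whose boundary contains all of $\ell$, the piece $A := D\cap H$ is a sub-disk of one of these half-disks, bounded by $\ell$, by arcs of the equator of $M_i$, and by an arc $\gamma$ of $S\cap H$. That arc $\gamma$ is a boundary arc of the saddle $\sigma$ of $M_i$ to which the endpoint $x := I\cap\partial M_i$ belongs. Cleanness does real work here: Proposition \ref{Prop:clean0}, applied along the equator arc of $M_i$ joining $\ell$ to $x$, together with the established absence of intersection arcs having both endpoints on $\partial M_i$, forces that nothing else separates $A$ from $\sigma$, so that $D$ genuinely abuts $\sigma$ along $\gamma$ and $I$ emanates from the equator at the foot of $\gamma$. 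Thus near $M_i$ the surface looks like the strand $\ell$, then the band $A\subset D$ running beside it over the top, then the saddle $\sigma$, with $I$ leaving into $R$.

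Next I would perform the surgery. The sub-disk $A\cup_\gamma\sigma$ of $S$ is a ``finger'' that runs alongside $\ell$ over $M_i$ and then dips into $M_i$ as the saddle $\sigma$. Because the free edge $\ell\subset\partial S$ lies right next to this finger, $S$ can be isotoped rel $\partial S=L$ by swinging the finger around $\ell$ and pulling it clear of $M_i$ altogether; any closed curves of intersection created in the process are eliminated using incompressibility of $S$, exactly as in the proof of Proposition \ref{Prop:clean2}. The outcome deletes the saddle $\sigma$, so the saddle count drops, while the collection of intersection arcs changes only by the removal or merging of $I$ (and the other intersection arcs incident to $\sigma$) --- in particular the number of intersection arcs does not increase. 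Either way the lexicographic complexity strictly decreases, contradicting cleanness. The under-disk case is the mirror image of this, and the case where $\partial D$ meets additional Menasco balls is no different since the modification is entirely local near $M_i$.

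The main obstacle, and the step I would spend the most care on, is making the surgery rigorous: verifying that swinging the finger $A\cup\sigma$ around $\ell$ and off $M_i$ is a legitimate ambient isotopy (or an explicitly complexity-reducing modification) that respects the Menasco structure and the spanning condition, and tracking precisely what becomes of the intersection-arc endpoint $x$ and the other corners of $\sigma$ on $\partial M_i$ once $\sigma$ is removed. Concretely this amounts to checking that the finger can be swept out without colliding with another sheet of $S$ or with another Menasco ball, and it is here that one must again invoke incompressibility of $S$ and the cleaning lemmas already proved.
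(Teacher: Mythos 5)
Your argument is correct and follows essentially the same route as the paper: reduce to the case of an over-disk containing the over-strand, then isotope the adjacent saddle out of $M_i$ without increasing the number of intersection arcs, contradicting the lexicographic minimality in the definition of clean. The paper simply exhibits this isotopy in the bottom of Figure \ref{Fi:cleaning4}; your more detailed description of the local picture and the ``finger'' $A\cup_\gamma\sigma$ fills in the same content.
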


\begin{proof}

Suppose, without loss of generality, that $D$ is an over-disk .  Clearly $\partial D$ will not contain the under-pass of $M_i$.  Suppose it contains the over-pass.  The bottom of Figure \ref{Fi:cleaning4} shows the isotopy that removes the corresponding saddle-disk  at $M_i$ without increasing the number of intersection arcs, contradicting the fact that our embedding of $S$ was clean.
\end{proof}

\begin{figure}
\begin{center}
	\scalebox{.5}{\includegraphics{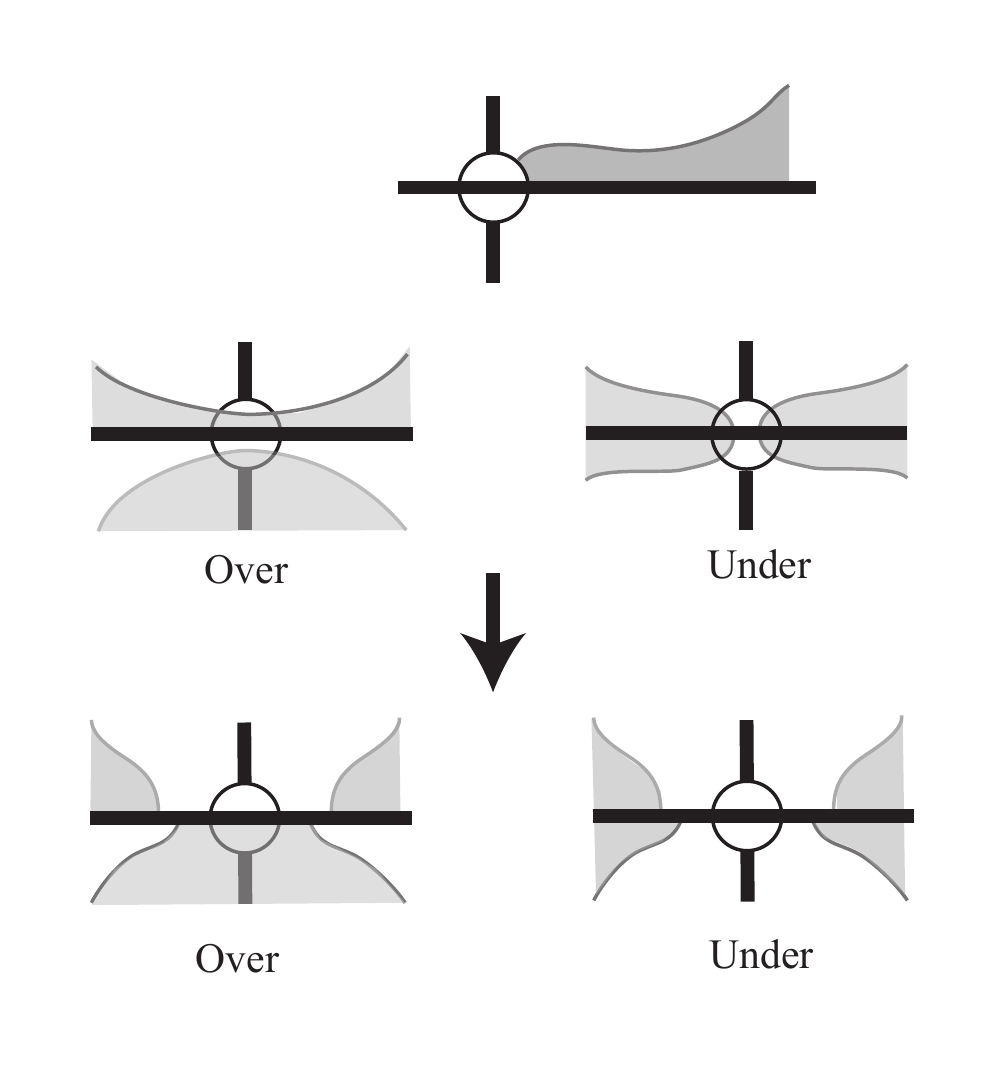}}
	\caption{Another removable structure in our cleaning process.}\label{Fi:cleaning4}
	\end{center}
\end{figure} 

The next two lemmas assume that the spanning surface in question is also meridianally boundary-incompressible. 

\begin{prop} \label{prop:clean}
If $S$ is meridianally boundary-incompressible in a clean embedding, then no intersection arc has both endpoints on the same component of $P \cap F$.
\end{prop}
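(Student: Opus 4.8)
The plan is to argue by contradiction. Suppose some intersection arc $I$ has both of its endpoints on a single component $e$ of $P\cap F$. Since $I$ lies entirely within one region $R$ of $F\backslash P$ and both endpoints of $I$ lie on the edge $e$ of $\partial R$, the arc $I$ separates from $R$ a disk $E\subset F$ with $\partial E=I\cup e'$ for a sub-arc $e'\subseteq e$ of the link. Among all intersection arcs with both endpoints on one component of $P\cap F$, I would choose $I$ so that the corresponding disk $E$ is \emph{innermost}, i.e.\ its interior contains no intersection arcs. (This is possible: any intersection arc with an endpoint in the interior of $e'$ that runs back into $E$ must have its other endpoint on $e'$ as well, since the interior of $E$ meets $L$ only along $e'$ and meets no Menasco ball, and such an arc would cut off a strictly smaller disk.) Using incompressibility of $S$ to discard any simple closed curves of $S\cap F$ lying in the interior of $E$, we may then assume the interior of $E$ is disjoint from $S$.

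The next step is to normalize the picture along $e'$. Any intersection arc meeting the interior of $e'$ must run into the region $R''$ on the opposite side of $e$ from $E$; applying Proposition \ref{Prop:clean2} to split the relevant over- and under-disks along arcs of $F\backslash(P\cup S)$, and invoking Propositions \ref{Prop:clean0} and \ref{prop:cleaner} to rule out the conformations they forbid, I would reduce to the situation in which $e'$ meets no other intersection arc, so that $e'$ lies on the boundary of a single disk of $S$, either an over-disk or an under-disk. I would then examine how the over-disk $A$ and under-disk $B$ of $S$ incident to $I$ sit relative to this disk: tracing $\partial A$ and $\partial B$ out of the endpoints of $I$ along $\partial S$ and along $e$, and using the forced alternation of over-disk and under-disk boundary along $L$, there are only a few combinatorial possibilities.

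If in one of these possibilities the disk incident to $I$ on one side --- say $A$ --- has $\partial A=\partial E$, then, because the interior of $E$ is disjoint from $S$, the ball cobounded by $A$ and $E$ meets $S$ only in $A\cup I$ once interior closed curves of $S$ are removed by incompressibility; hence $A$ is parallel to $E$ and $I$ cuts the disk $A$ off $S$. Isotoping $A$ through this ball and just past $E$ then deletes the intersection arc $I$ while creating no new ones, contradicting cleanness. In every other possibility, $I$ does not cut a disk off $S$; pushing $E$ out of $N(L)$ converts it into a disk $D$ with $\partial D=\alpha\cup\beta$, where $\alpha\subset\partial N(L)$ and $\beta\subset S$ is a copy of $I$, so $D$ is a boundary-compression disk for $S$. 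The crucial claim is that $D$ is in fact \emph{meridianal}: the arcs of $\partial S$ and of $e$ emanating from the endpoints of $I$, constrained by the over/under alternation established above, force the short arc of $\partial S$ joining the endpoints of $\alpha$ to fail to run parallel to $\alpha$, so that of the two arcs $\phi_1,\phi_2$ into which $\partial\alpha$ divides $\partial S$, one completes $\alpha$ to a curve bounding a disk on $\partial N(L)$ and the other completes it to a meridian. This contradicts the hypothesis that $S$ is meridianally boundary-incompressible, and the argument is complete.

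The step I expect to be the main obstacle is the passage from ``boundary-compression disk'' to ``\emph{meridianal} boundary-compression disk'': one must pin down how the slope curve $\partial S$ --- a $(p,1)$-curve on $\partial N(L)$ --- lies with respect to the sub-arc $e'$, and verify that the correct one of $\phi_1,\phi_2$ together with $\alpha$ is genuinely a meridian rather than an inessential curve. It is precisely here that both hypotheses enter: cleanness (through Propositions \ref{Prop:clean0}, \ref{Prop:clean2}, and \ref{prop:cleaner}) to force the right local model near $e$, and meridianal boundary-incompressibility to furnish the contradiction once that model is established. A secondary concern is making the normalization along $e'$ and the isotopy of $A$ rigorous --- in particular checking that they preserve the Menasco form and do not increase the count of intersection arcs.
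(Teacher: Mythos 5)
Your argument is essentially the paper's: pass to an innermost arc cutting off a disk of $F$ whose interior is disjoint from $S$, then observe that this disk either exhibits a meridianal boundary-compression or shows the arc cuts a disk off $S$ and can be isotoped away, contradicting meridianal boundary-incompressibility or cleanness respectively. Your normalization along $e'$ via Propositions \ref{Prop:clean0}, \ref{Prop:clean2} and \ref{prop:cleaner} is superfluous once the innermost disk is chosen, and the meridianal verification you flag as the main obstacle is asserted with no more detail in the paper's own proof.
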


\begin{proof}
 Suppose we have such an intersection arc, $I$.  See the two conformations in Figure \ref{Fi:cleaning9}. The region of $F$ bounded by $I$ together with the portion of $P \cap F$ that connects the endpoints of $I$ may or may not contain additional arcs of intersection in its interior.  If it does, choose an innermost one, $I^*$.  Now the region of $F$ bounded by $I^*$ together with the portion of $P \cap F$ that connects the endpoints of $I^*$ does not intersect $S$ in its interior.  This region of $F$ is a disk  whose boundary lies entirely on $P$, except along $I^* \subset S$ and that does not intersect $S$ in its interior.  Therefore, either this disk represents a meridianal boundary compression or $I^*$ cuts a disk  off of $S$ and can thus be pushed into $L$, reducing the number of intersection arcs.  The former possibility contradicts our assumption that $S$ is meridianally boundary-incompressible, and the latter contradicts the assumption that our embedding of $S$ is clean.  Therefore, no such $I$ can exist.
\end{proof}

\begin{figure}
\begin{center}
	\scalebox{.5}{\includegraphics{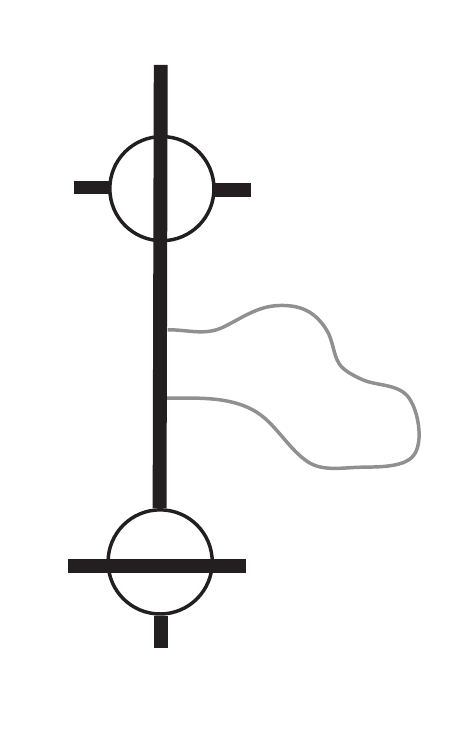}}
	\caption{An additional removable structure in our cleaning process.}\label{Fi:cleaning9}
	\end{center}
\end{figure}

\begin{prop}  \label{Prop:clean3}      

Suppose $S$ is meridianally boundary-incompressible.  A clean embedding will contain no intersection arc $I$ with one endpoint on a Menasco ball boundary $\partial M_i$ and the other on a portion of $P \cap F$ that connects to $M_i$, as in Figure \ref{Fi:cleaning5}.

\end{prop}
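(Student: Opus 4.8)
The plan is to argue by contradiction, in the spirit of Propositions~\ref{prop:cleaner} and~\ref{prop:clean}: assuming a clean, meridianally boundary-incompressible embedding of $S$ contains such an arc $I$, I will exhibit an isotopy of $S$ that lexicographically lowers the pair (number of intersection arcs, number of saddle-disks), contradicting cleanness.

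First I would fix notation. Let $a=I\cap\partial M_i$, let $s\subset P$ be the strand bounding the region $R\subset F\setminus P$ that contains $I$ and that runs into $M_i$, let $b=I\cap s$, and let $c=s\cap\partial M_i$. Then $I$, the sub-arc $s_{bc}$ of $s$ from $b$ to $c$, and the sub-arc $m_{ca}$ of $\partial M_i$ from $c$ to $a$ lying on the frontier of $R$ together bound a disk $\Delta\subset F$. I would then reduce to the case that $\Delta$ is \emph{empty}: its interior contains no arcs of $S\cap F$, and $s_{bc}$ and $m_{ca}$ meet $S$ only at their endpoints. For this reduction, simple closed curves of $S$ in the interior of $\Delta$ are removed using incompressibility; an arc of $S\cap F$ that meets $s_{bc}$ cannot escape $\Delta$, and by Proposition~\ref{prop:clean} it cannot have both endpoints on $s\subset P$, while by the Menasco fact that no intersection arc has both endpoints on one $\partial M_j$ it cannot have both endpoints on $m_{ca}$, so it realizes a strictly smaller instance of the forbidden configuration, on which we recurse; arcs meeting $m_{ca}$ are handled the same way. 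This is the only step that uses meridianal boundary-incompressibility, via Proposition~\ref{prop:clean}.

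Now $I$ lies on the boundary of exactly one over-disk and one under-disk; let $D$ be whichever of these has $\partial D$ leaving $b$ along $s$ toward $c$. Tracing $\partial D$: it runs along $s_{bc}$ to $c$, where $L$ dives into $M_i$, so $\partial D$ turns onto $\partial M_i$ along the frontier of $R$; since $m_{ca}$ meets $S$ only at $a$, it reaches $a$ and closes up along $I$. Hence $\partial D=I\cup s_{bc}\cup m_{ca}=\partial\Delta$, so $D\cup\Delta$ bounds a ball $B$ whose interior misses $S$. Isotoping $D$ across $B$ and on past $F$, while sweeping out of $M_i$ the saddle-disk incident to $a$, removes the intersection arc $I$ (and replaces the arcs of $S\cap F$ flanking $s$ at $M_i$ by fewer such arcs), so the number of intersection arcs strictly drops, contradicting cleanness. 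A convenient repositioning of $D$ of the type in Proposition~\ref{Prop:clean2} may be used beforehand to make the ball $B$ manifestly disjoint from the rest of $S$.

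The delicate point, which I expect to be the main obstacle, is the bookkeeping inside $M_i$ when it carries more than one saddle: I must verify that the final isotopy genuinely peels off the single saddle incident to $a$ without creating new intersection arcs or new saddles, i.e. that the interior of $B$ really misses the other saddles of $M_i$ and that the surface near $M_i$ simplifies rather than merely rearranges. One must also check the mirror configuration of Figure~\ref{Fi:cleaning5} separately, though it behaves identically. Everything else, the recursive innermost reduction and the isotopies carried out within $F$, is routine and parallel to the preceding cleaning lemmas.
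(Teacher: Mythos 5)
There is a genuine gap in your reduction to the ``empty $\Delta$'' case, and it is exactly the case the paper has to work hardest to handle. The sub-arc $s_{bc}$ lies on the link strand $s$, which separates \emph{two} regions of $F\backslash P$: the region $R$ containing $I$, and the region on the other side of $s$. An intersection arc endpoint on the interior of $s_{bc}$ may belong to an arc living in that \emph{adjacent} region. Such an arc is not contained in $\Delta$ at all, is not a ``smaller instance of the forbidden configuration,'' and cannot be removed by your recursion (nor by Proposition~\ref{prop:clean} or the Menasco fact, since it may be a perfectly legitimate flat, de-alternating, or re-alternating arc wandering off elsewhere). When such an endpoint is present, the boundary of your disk $D$ turns off $s$ at that endpoint onto the offending arc instead of continuing along $s_{bc}$ to $c$, so $\partial D\neq\partial\Delta$, the ball $B$ does not exist, and your final isotopy is unavailable. (Your treatment of $m_{ca}$ is fine, since that quarter of the equator bounds only the region $R$; the problem is specific to $s_{bc}$.)

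The paper isolates precisely this situation: after passing to an innermost arc $I^*$, the portion of $P\cap F$ from the endpoint of $I^*$ to $\partial M_i$ contains either no intersection arc endpoints (in which case Proposition~\ref{prop:cleaner} gives the contradiction, essentially your ``trace $\partial D$ to the crossing'' argument) or exactly one endpoint, of an arc in the adjacent region. In the latter case one applies Proposition~\ref{Prop:clean2} to split the disk and thereby produces a disk whose boundary is an arc in $S$ together with a single arc along $P$; this disk is either a meridianal boundary-compression disk (contradicting the hypothesis) or cuts a disk off $S$ (contradicting cleanness). Note that this is a second, \emph{direct} appeal to meridianal boundary-incompressibility, independent of the one buried in Proposition~\ref{prop:clean}; your remark that the hypothesis enters only once, via Proposition~\ref{prop:clean}, is a symptom of the missing case. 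Your argument for the first case is essentially the paper's, but as written the proof is incomplete without an argument for the second.
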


\begin{figure}
\begin{center}
	\scalebox{.5}{\includegraphics{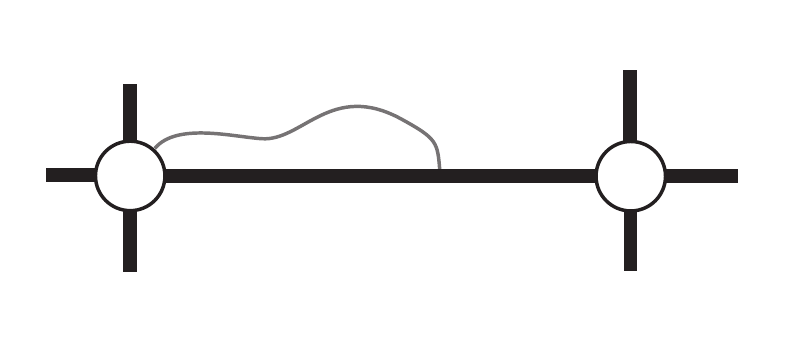}}
	\caption{A final conformation for our cleaning process to remove.}\label{Fi:cleaning5}
\end{center}
\end{figure}  

\begin{proof}

Suppose we do have such an intersection arc $I$.  Consider the region bounded by $I$, $\partial M_i$, and the portion of $P \cap F$ that contains an endpoint of $I$.   Notice that any intersection arc in this region must have one endpoint on $\partial M_i$ and the other on $P$, since no intersection arc can have both endpoints on either of these, as shown earlier by Menasco and by Proposition \ref{prop:clean}.  Choose an innermost arc of intersection; that is, one whose analogously bounded region does not intersect $S$ on its interior.  Call this intersection arc $I^*$.  

Now consider the portion of $P \cap F$ connecting the endpoint of $I^*$ on $P$ with $\partial M_i$.  Its interior will either contain no intersection arc endpoints or exactly one endpoint, that being of an intersection arc that lies in the region of $F \backslash P$ \emph{adjacent} to the one containing $I$ and $I^*$.

First suppose that this portion of $P$ contains no intersection arc endpoints.  Then, we get an immediate contradiction by Proposition \ref{prop:cleaner}.

Now suppose that the aforementioned portion of $P$ contains one intersection arc endpoint.  This gives the picture in Figure \ref{Fi:cleaning7}, which Proposition  \ref{Prop:clean2} allows us to alter as shown.  We then have a disk  (depicted here as an over-disk) whose boundary lies entirely on the interior of $S$, except for a single arc along $P$.  Then, either this disk represents a meridianal boundary compression (contradicting the fact that $S$ is meridianally boundary-incompressible) or its boundary cuts a disk  off of $S$ and can therefore be pushed into $L$, which would decrease the number of intersection arcs, thereby contradicting the assumption that $S$ was embedded cleanly.
\end{proof}

\begin{figure}
\begin{center}
	\scalebox{.75}{\includegraphics{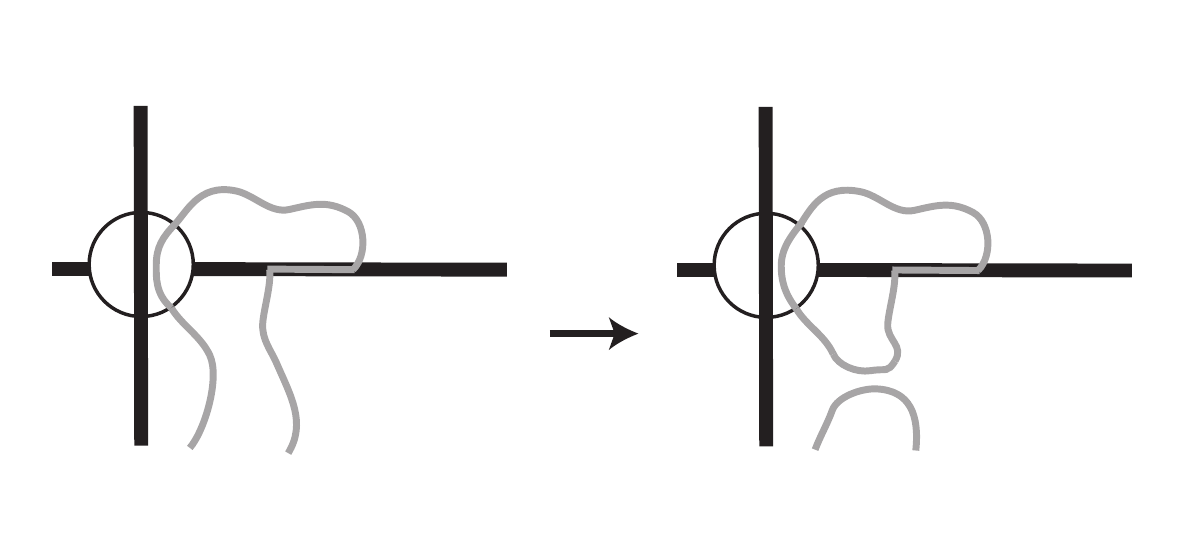}}
	\caption{Using Proposition \ref{Prop:clean2} to obtain a potential boundary-compression disk .}\label{Fi:cleaning7}
\end{center}
\end{figure}  

Also, now that we have the Menasco structure at our disposal, we can provide a different picture for what it means to add a crosscap.  One can add a crosscap to a surface in Menasco form by performing one of the two operations shown in Figure \ref{Fi:crosscap2}. This has the same impact as adding a crosscap as in Figure \ref{Fi:crosscap} and then pulling the knot taut.

\begin{figure}
\begin{center}
	\scalebox{.5}{\includegraphics{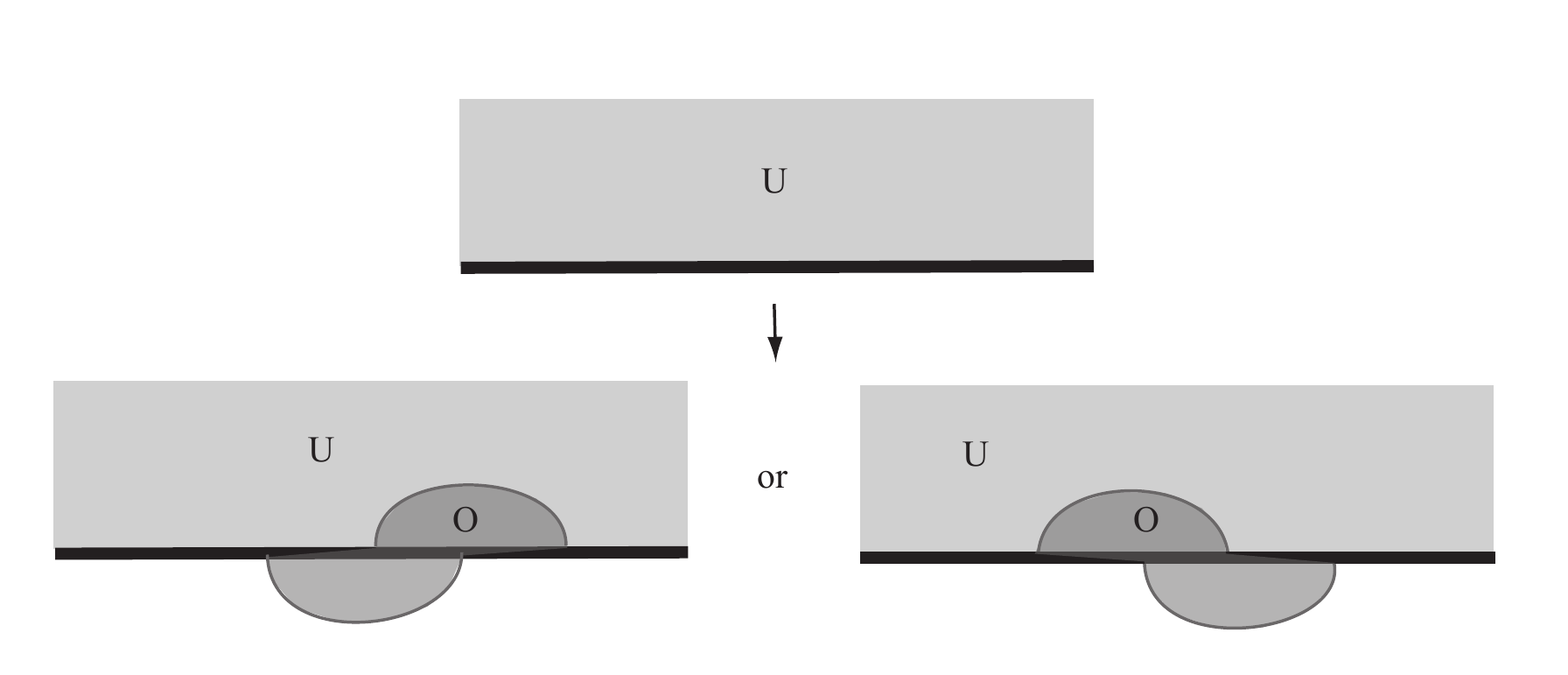}}
	\caption{Adding a crosscap to a surface in Menasco form corresponds to adding two intersection arcs as in this figure.}\label{Fi:crosscap2}
\end{center}
\end{figure}  

\subsection {Combinatorial Properties}

Suppose that we have a clean embedding of a meridianally essential surface $S$.  In this section, we will lay out several  combinatorial properties of $S \cap F$.  From these, the proof of  Theorem \ref{main} will follow somewhat naturally.

Suppose we incorporate into a Menasco projection $P$ a collection $A=\cup_{i=1}^{r} A_i$ of re-alternating intersection arcs, all of whose endpoints lie on $P$ (rather than on a Menasco ball).  Recall that because $S$ is meridianally boundary-incompressible none of these will have both endpoints on the same edge of a projection $n$-gon of $P$.  We define a \textbf{segment} of $P$ with respect to $A$ to be a portion of $P \cap F$  
that is disjoint on its interior from endpoints of arcs in $A 
$ and each of whose endpoints lies either on the boundary of a  
Menasco ball $M_i$ or at an endpoint of an arc in $A$.  See Figure \ref{Fi:segments}.  Note  
that if $A=\varnothing$ then each segment connects a pair of adjacent Menasco balls.

\begin{figure}
\begin{center}
	\scalebox{.9}{\includegraphics{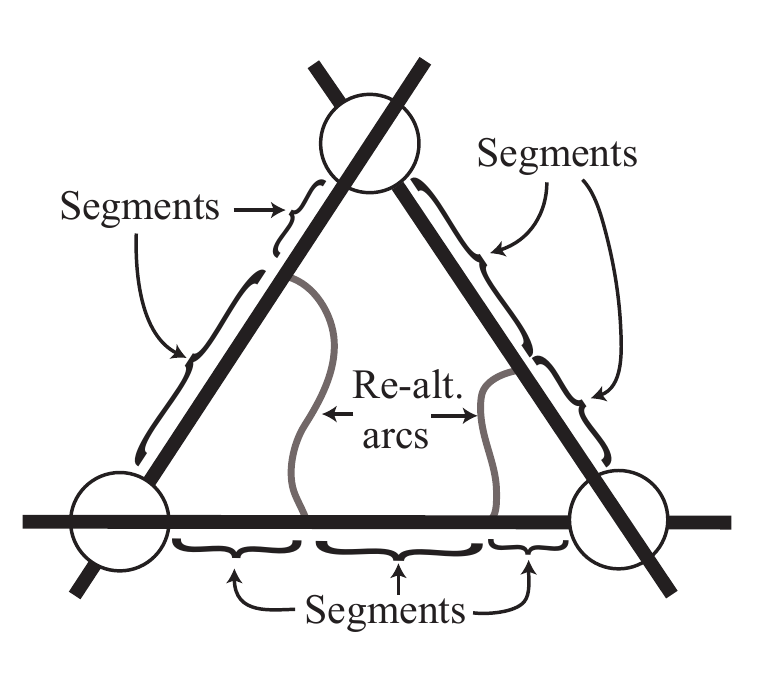}}
	\caption{Segments of $P$.}\label{Fi:segments}
\end{center}
\end{figure}  

Let $P^* = F \cap (P \cup A)$.  As each segment lies on the boundaries of exactly two regions of $F \backslash P^*$, we can associate two ``boundary segments''  
to any segment.  We thus define a \textbf{boundary segment} of $P$ with respect to $A$  
to be a pairing of a region $R$ of $F \backslash P^*$  
with a segment of $P$ (with respect to $A$) on its boundary.  As with a segment, given a boundary segment that lies on the boundary of a particular region, each endpoint of that boundary segment either will be an intersection arc endpoint on that region's boundary or will lie on the boundary of a Menasco ball that forms a portion of that region's boundary.

For the following remarks, suppose that our projection $P$ contains $n$ crossings, that $A$ contains $r$ re-alternating arcs, all of whose endpoints lie on $P$, and that $P^* =  F \cap (P \cup A)$, as before.

\bigskip

\begin{prop}

$P^*$ will contain  
exactly $2n+2r$ segments and $4n+4r$ boundary segments.  Each segment will lie on the boundary of an over-disk at one end and the boundary of an under-disk at the other. 
\end{prop}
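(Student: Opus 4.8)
The plan is to count segments and boundary segments by a direct combinatorial bookkeeping on the $1$-complex $P^* = F \cap (P \cup A)$, and then to extract the alternation statement from the structure of $S$ in Menasco form. First I would count the vertices of $P^*$ that lie on $P$. The projection $P$, having $n$ crossings, meets $F$ in $2n$ edges (each crossing contributes four strand-ends on the four Menasco balls, and the strands between consecutive balls give $2n$ arcs of $P \cap F$ in total, since the link is $4$-valent at each crossing and these join up into $2n$ segments when $A = \varnothing$). Each re-alternating arc $A_i$ has both endpoints on $P$ (by hypothesis), and by meridianal boundary-incompressibility together with Proposition~\ref{prop:clean} its two endpoints lie on \emph{distinct} components of $P \cap F$; hence adding the $r$ arcs of $A$ introduces $2r$ new subdivision points on $P$, each of which splits one existing segment into two. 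Starting from the $2n$ segments present when $A=\varnothing$ and performing $2r$ subdivisions yields $2n + 2r$ segments, as claimed.

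Next I would count boundary segments. By definition a boundary segment is a pair consisting of a segment and one of the two regions of $F \backslash P^*$ on whose boundary that segment lies; since $F$ is a planar surface (a sphere with the open Menasco balls removed) and $P \cup A$ is embedded in it, each segment of $P^*$ is adjacent to exactly two such regions — here I would note that the two regions are genuinely distinct, i.e. no segment has the same region on both sides, which follows because $P$ is a reduced (hence in particular non-nugatory) alternating projection and the arcs of $A$ lie in the complement of $S$ and do not create one-sided situations. Therefore the number of boundary segments is exactly twice the number of segments, namely $2(2n+2r) = 4n+4r$.

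For the alternation claim, I would use the over-disk/under-disk structure of $S$ in Menasco form described in the ``Background'' and ``Cleaning'' subsections: every point of $L$ lies on the boundary of an over-disk or an under-disk, and the transitions between the two happen exactly at endpoints of intersection arcs. A segment of $P$ is, by definition, a piece of $P \cap F$ free of endpoints of arcs in $A$ on its interior; but its interior may still meet endpoints of \emph{other} intersection arcs — however, I claim that in a clean embedding each segment carries at most the transitions forced at its two ends, so that one end abuts an over-disk and the other an under-disk. The key input is that each endpoint of a segment is either an endpoint of an arc of $A$ (a re-alternating arc, which by the analysis accompanying Figure~\ref{Fi:arcs} toggles the over/under side of $L$) or lies on a Menasco ball boundary, at which the over-pass/under-pass structure of the ball again forces the adjacent disk on one side to be an over-disk and on the other an under-disk. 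Propositions~\ref{Prop:clean0}, \ref{prop:cleaner}, \ref{prop:clean}, and \ref{Prop:clean3} rule out precisely the local configurations in which a segment could fail to have an over-disk at one end and an under-disk at the other (arcs with both ends on one component, arcs cutting off a disk, arcs running alongside a Menasco ball, etc.). Assembling these, each segment inherits an over-disk on one side at one end and an under-disk at the other.

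The main obstacle I expect is the alternation half rather than the counting: one must be careful that a segment's interior really is free of \emph{all} transition-forcing intersection-arc endpoints, not merely of endpoints of arcs in $A$. Concretely, I would need to argue that in a clean, meridianally essential embedding the only intersection arcs with an endpoint on the interior of a segment are flat arcs whose presence does not change which of over-disk/under-disk lies on a given local side of $P$, and that re-alternating and de-alternating arcs either belong to $A$ or are excluded by the cleaning propositions. Once that local analysis is pinned down, the alternation is immediate, and the two counts $2n+2r$ and $4n+4r$ follow from the subdivision argument above.
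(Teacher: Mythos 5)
Your counting argument is correct and is essentially the paper's: with $A=\varnothing$ each of the $n$ Menasco balls contributes four segment-ends and each segment has two ends, giving $2n$ segments; each re-alternating arc of $A$ adds two subdivision points and hence two segments, giving $2n+2r$; and boundary segments are counted two per segment, giving $4n+4r$. That half of the proposal needs no repair.

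The alternation half contains a genuine error. You assert that in a clean embedding ``each segment carries at most the transitions forced at its two ends,'' i.e.\ that a segment's interior is free of over-disk/under-disk transitions, and that flat intersection arcs ``do not change which of over-disk/under-disk lies on a given local side of $P$.'' Both assertions are false. As set out at the start of the Cleaning subsection, \emph{every} intersection arc endpoint on $L$ --- whether the arc is flat, de-alternating, or re-alternating --- marks a transition between the boundary of an over-disk and the boundary of an under-disk; the trichotomy refers to what happens to the projection upon cutting along the arc, not to whether a transition occurs at its endpoints. Worse, the proposition immediately following this one states that each segment contains an odd, hence \emph{positive}, number of intersection arc endpoints in its interior (a fact the main lemma's counting argument depends on), whereas your claim would force that number to be zero. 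The correct argument for the alternation statement is purely local at the two \emph{ends} of a segment and says nothing about its interior: when $A=\varnothing$, alternation of $P$ puts an over-pass at one end of each segment and an under-pass at the other, and the piece of $S$ adjacent to $L$ at an over-pass (resp.\ under-pass) lies on an over-disk (resp.\ under-disk); and when an endpoint of an arc of $A$ subdivides a segment, the re-alternating property is precisely what guarantees that the two new segment-ends created there again present an over-disk on one side and an under-disk on the other. Your appeal to Propositions \ref{Prop:clean0}, \ref{prop:cleaner}, \ref{prop:clean}, and \ref{Prop:clean3} to ``rule out'' interior transitions is therefore not only vague but aimed at establishing something that is false and is not what the proposition claims.
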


\begin{proof}

First suppose that $r=0$; that is, $A=\varnothing$.  Each of the $n$ crossings (Menasco balls) provides an endpoint for four segments of $P$.  Since each segment has two endpoints, we must have $2n$ segments and thus $4n$ boundary segments.  Since $P$ is alternating, each segment must bound part of an over-disk  at one end and part of an under-disk  at the other.  The statement thus holds for $r=0$.  

Now, suppose we add the re-alternating arcs of $A$ one at a time.  Every time we add an arc, that arc has two endpoints, each on some segment of $P$.  Each endpoint divides its segment into two new segments, so each arc in $A$ increases the number of segments by two.  Thus, we end up with $2n+2r$ segments.  Because each arc in $A$ is re-alternating, each segment will still bound part of an over-disk  at one end and part of an under-disk  at the other.
\end{proof}

\begin{prop}
Each of the $2n+2r$ segments (and at least this many  
boundary segments) of $P^*$ contains an odd, and therefore positive,  
number of intersection arc endpoints on its interior.
\end{prop}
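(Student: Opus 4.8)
The plan is to read off the parity from the previous proposition together with the alternation of over-disk/under-disk labels along $L$. Recall from the Cleaning subsection that every point of $L \backslash M$ that is not an intersection arc endpoint lies on the boundary of exactly one over-disk or one under-disk; write $+$ for the over-disk case and $-$ for the under-disk case. This label is locally constant along $L$ away from intersection arc endpoints and switches precisely at each such endpoint (this is just the fact that $S$ crosses $S^2$ transversally along each intersection arc). So along a fixed segment $\sigma$ with endpoints $x$ and $y$, the interior intersection arc endpoints of $\sigma$ cut $\sigma$ into a chain of sub-arcs whose labels alternate; if there are $k$ such endpoints there are $k+1$ sub-arcs, so the label just inside $x$ agrees with the label just inside $y$ exactly when $k$ is even. (Proposition \ref{prop:clean} guarantees that each intersection arc contributes at most one endpoint to the interior of $\sigma$, though this is not needed for the count.)

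Next I would invoke the previous proposition: $\sigma$ bounds an over-disk at one end and an under-disk at the other, that is, the label just inside $x$ is $+$ and the one just inside $y$ is $-$, or vice versa. In either case the two end labels differ, so $k$ must be odd, hence $k \geq 1$. This argument is insensitive to whether $x$ and $y$ lie on Menasco balls or at endpoints of arcs in $A$, since it involves only $\sigma$'s interior and the two labels immediately inside $x$ and $y$: a Menasco-ball endpoint already carries a definite label, and an $A$-arc endpoint is simply where the chain of sub-arcs for $\sigma$ terminates.

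For the boundary segments, I would note that each of the $k$ interior intersection arc endpoints of $\sigma$ is an endpoint of an intersection arc lying entirely in one of the two regions of $F \backslash P^*$ bordering $\sigma$ (intersection arcs do not meet $P^*$), so it lies on the interior of exactly one of the two boundary segments determined by $\sigma$. Writing $k = a + b$ according to the two sides, oddness of $k$ forces exactly one of $a$ and $b$ to be odd and positive; thus each of the $2n+2r$ segments contributes at least one boundary segment carrying an odd positive number of interior intersection arc endpoints, giving at least $2n+2r$ such boundary segments in all. The only real content is this labeling bookkeeping; the substantive input --- that a segment's two end labels differ --- is the previous proposition, so I expect no serious obstacle beyond being careful about the precise meaning of ``bounds an over-disk at one end'' and about the degenerate case where one of the two boundary segments of $\sigma$ receives none of its segment's interior endpoints (harmless, since then the other receives all $k$).
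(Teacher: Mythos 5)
Your argument is correct and is essentially the paper's own proof: the over-disk/under-disk label along $L$ switches exactly at intersection arc endpoints, and since each segment bounds an over-disk at one end and an under-disk at the other (the previous proposition), the number of switches on its interior is odd. Your extra bookkeeping for the boundary segments (splitting the odd count $k=a+b$ between the two sides of the segment) correctly justifies the parenthetical claim, which the paper leaves implicit.
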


\begin{proof}

$P$ everywhere bounds either an over-disk  or an under-disk  and switches exactly where it contains an intersection arc endpoint.  Since each segment of $P^*$ bounds an over-disk  at one end and an under-disk  at the other, it must switch an odd number of times.  Therefore, it must contain an odd number of intersection arc endpoints.
\end{proof}

\begin{prop}\label{Prop:boundary}

$P^*$ divides $F$ into $n+r+2$ regions, each with at least two boundary segments on its boundary.

\end{prop}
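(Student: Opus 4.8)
The plan is first to count the regions by a cell-complex (Euler characteristic) argument, and then to analyze the cyclic boundary word of an arbitrary region, using Proposition \ref{prop:clean} to eliminate the single degenerate configuration.

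For the count, I would note that since $L$ is non-splittable its projection $P$ is connected, so the $n$ crossings, the $2n$ edges, and the complementary regions of $P$ form a cell decomposition of $S^{2}$ with $n-2n+(\#\text{regions})=2$, i.e. $n+2$ regions, each an open disk. Passing from $S^{2}$ to $F=S^{2}\setminus\mathring M$ merely rounds the corners of these regions at the crossings, so $F\setminus(P\cap F)$ still has $n+2$ disk regions. Now adjoin the $r$ re-alternating arcs of $A$ one at a time: each lies in a single disk region of $F\setminus(P\cap F)$, with both endpoints on $P$, hence is properly embedded there and splits that disk into two. After all $r$ arcs have been added we have $n+r+2$ regions, each a disk. (Equivalently, taking $P^{*}\cup\partial M$ as the $1$-skeleton one computes $V=4n+2r$, $E=(2n+2r)+r+4n=6n+3r$, and $\chi(F)=2-n$, so the number of $2$-cells is $(2-n)-(4n+2r)+(6n+3r)=n+r+2$.)

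For the second assertion I fix a region $R$ and let $\bar R$ be the corresponding region of $S^{2}\setminus(P\cup A)$, obtained by collapsing the Menasco-ball corners of $R$ back to crossings; $\bar R$ is a disk, and the boundary segments of $R$ are exactly the appearances of (sides of) segments of $P$ encountered as one reads around $\partial\bar R$. That cyclic word is an interleaving of segments of $P$ and arcs of $A$ in which no two arcs of $A$ are consecutive: at each endpoint of an arc of $A$ — which lies in the interior of an edge of $P$ and is a trivalent point of $P^{*}$ with two directions along $P$ and one into $A$ — the boundary must leave along $P$. Consequently every arc of $A$ on $\partial\bar R$ is immediately followed by a segment, and distinct arcs of $A$ are followed by distinct segments, so $R$ has at least as many boundary segments as there are arcs of $A$ on $\partial\bar R$.

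It then suffices to handle regions with few arcs of $A$ on the boundary. If $\partial\bar R$ meets $A$ in no arc, then $\partial\bar R\subset P$, so $\bar R$ is a complementary region of $P$, i.e. a projection $k$-gon; since $P$ is reduced it contains no monogon (a monogon would make its crossing nugatory), whence $k\ge 2$ and $R$ has at least $k\ge 2$ boundary segments. If $\partial\bar R$ contains exactly one arc $a$ of $A$ together with exactly one boundary segment $s$, then $\partial\bar R=a\cup s$ is a bigon, so the two endpoints of $a$ coincide with the two endpoints of the single segment $s$ and therefore lie on one and the same component of $P\cap F$; but $a$ is a (re-alternating, hence genuine) intersection arc, and this contradicts Proposition \ref{prop:clean}. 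In every remaining case $R$ already has at least two boundary segments, so the proposition follows. I expect this last, bigon, case to be the one requiring care: it is precisely where the standing hypothesis that $S$ is meridianally essential (via Proposition \ref{prop:clean}) is needed, and one must check that this configuration — and not some other near-degenerate boundary pattern — is the only obstruction.
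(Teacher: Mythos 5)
Your proof is correct and follows essentially the same route as the paper: the region count is the paper's induction (add crossings, then add the arcs of $A$ one at a time, each splitting a region), and the lower bound on boundary segments is established by the same dichotomy, ruling out a monogon via reducedness of $P$ and ruling out an arc of $A$ with both endpoints on one edge via Proposition \ref{prop:clean}. Your cyclic-boundary-word analysis merely spells out more explicitly the case check that the paper compresses into one sentence.
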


\begin{proof}

First suppose $A=\varnothing$.  If $n=0$ we have two regions, and each time we add a crossing we add a region.  Thus, for arbitrary $n$ we have $n+2$ regions.  Since each arc in $A$ will separate a region in two, thus increasing the number of regions by one, we conclude that $P^*$ divides $F$ into $n+r+2$ regions.

Clearly the boundary of every region must contain at least one boundary segment.  Suppose the boundary of some region contains only one boundary segment. Then either the two endpoints occur at the same crossing, which contradicts the fact that $P$ is reduced, or both endpoints of a re-alternating arc in $A$ are on the same edge of a projection $n$-gon of $P$, contradicting the assumption that $S$ is meridianally essential and cleanly embedded, by Proposition \ref{prop:clean}.
\end{proof}

Given $P^*$ as above, suppose that $R$ is a region  
of $F \backslash P^*$.  Also, suppose that $R \cap S=B$, where $B$  
is a (possibly empty) collection of intersection arcs.  Consider $B$ as a subset of $R$.  
 This creates a new regional picture of  
$R \subset F$, which reveals how $S$ intersects $R$.

If $B$ is non-empty, there exist at least two outermost components of $R \backslash B$; that is, ones whose boundary contains exactly one arc from $B$.  Since no segment contains both endpoints of any intersection arc from $A$, the boundary of each outermost region contains portions of at least two segments of $P^*$.

\section {Main Results}

The proof of Theorem \ref{main} will proceed by induction on the number of crossings.  First, however, we use the results of the previous section to prove a critical lemma, which will be helpful in reducing the $n$-crossing case to an $(n-1)$-crossing case. 

We will say that a surface $S$ intersects a Menasco ball $M_i$ in a \textbf{crossing band} if $S \cap M_i$ consists of a disk bounded by the over-strand and under-strand on $\partial M_i$ together with a pair of opposite arcs on the equator of $M_i$, as in Figure \ref{Fi:desirednew} and Figure \ref{Fi:desired}.  The two possible configurations of a crossing band at a particular crossing correspond to the two pairs of opposite arcs on the Menasco ball's equator.  Note that a surface with a crossing band is not in Menasco form.

\begin{figure}
\begin{center}
	\scalebox{.7}{\includegraphics{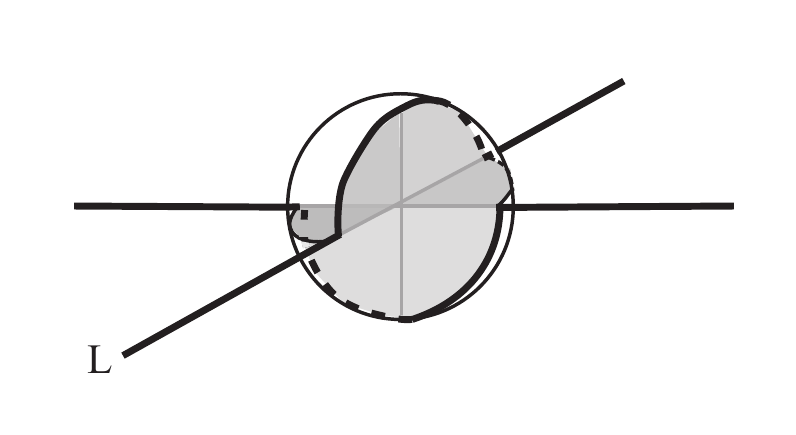}}
	\caption{A crossing band in a Menasco ball.}\label{Fi:desirednew}
\end{center}
\end{figure}

\begin{figure}
\begin{center}
	\scalebox{.7}{\includegraphics{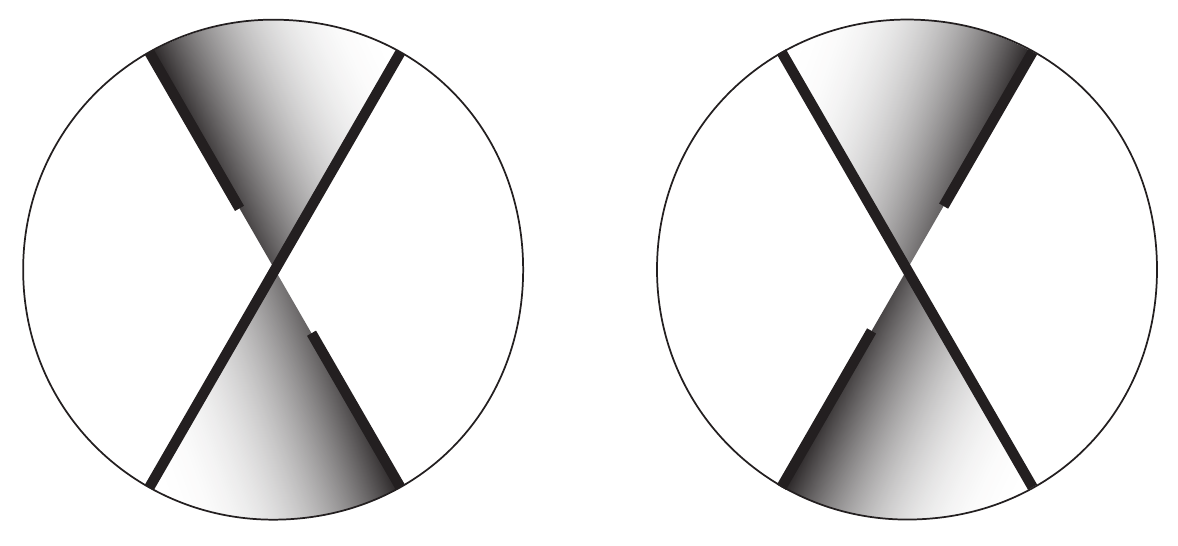}}
	\caption{Crossing bands.}\label{Fi:desired}
\end{center}
\end{figure}  

\begin{lemma}   \label{lemma:desired}    

A meridianally essential surface $S$ spanning an alternating link $L$ can be  
isotoped relative to a given nontrivial Menasco projection $P$ to obtain a crossing band.
\end{lemma}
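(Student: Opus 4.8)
The plan is to run the whole argument with $S$ already in a \emph{clean embedding in Menasco form}, so that in particular $S$ has no crossing band yet — otherwise there is nothing to prove — and then to isolate a single Menasco ball at which the intersection pattern $S\cap F$ is as simple as possible and push that local piece of $S$ into the ball to produce the crossing band. As in the rest of Section 4, we may assume $L$ is non-splittable. The first step is to apply all of the cleaning results: Propositions \ref{Prop:clean0}, \ref{prop:cleaner}, \ref{prop:clean}, and \ref{Prop:clean3} together delete every configuration on that list, and it is here that meridianal boundary-incompressibility is used crucially, since in the forbidden cases Propositions \ref{prop:clean} and \ref{Prop:clean3} exhibit a meridianal boundary-compression. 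After this cleanup, every intersection arc either has both endpoints on $L$ (and is flat, de-alternating, or re-alternating as in Figure \ref{Fi:arcs}) or runs from a Menasco ball equator to $L$; no intersection arc has both endpoints on the same component of $P\cap F$, and no intersection arc runs between a Menasco ball $M_i$ and a segment incident to $M_i$.

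Next I would carry out a counting argument on $S\cap F$. Using the combinatorial inventory of Section 4.3 in the basic case $P^{*}=P\cap F$ (no arcs of $A$): $P$ cuts $F$ into $n+2$ regions by Proposition \ref{Prop:boundary}; there are $2n$ segments, each carrying an odd, hence positive, number of intersection-arc endpoints on its interior, with one end of each segment bounding an over-disk and the other an under-disk; and every region has at least two boundary segments. Playing this off against the decomposition of $S$ into over-disks, under-disks, and saddles, together with the Euler-characteristic bookkeeping of that disk decomposition — which ties the number of disk pieces, the number of intersection arcs, and the number of saddles to $\chi(S)$ — I expect to force a Menasco ball $M_i$ carrying a single saddle that is locally innermost: the two segments of $P$ leaving $M_i$ on the side of that saddle each contain exactly one intersection-arc endpoint, and the arcs at those two endpoints, together with $\partial M_i$, bound a disk of $F$ whose interior misses $S$. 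Up to the freedoms granted by Proposition \ref{Prop:clean2}, this is exactly the single-saddle picture drawn in Figure \ref{Fi:menview}, with its over-disk and under-disk reduced to the two small triangles hugging the strands; in the (degenerate) saddle-free case one argues the same way with the analogous configuration of flat arcs.

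At such an $M_i$ the final step is the local isotopy that slides the over-disk triangle across $\partial M_i$ and through the ball so that, together with the saddle and the matching under-disk triangle, it becomes the disk $S\cap M_i$ bounded by the over-strand, the under-strand, and a pair of opposite equatorial arcs — a crossing band in the sense of Figures \ref{Fi:desirednew} and \ref{Fi:desired}, hence an arc of $S$ (its core) whose neighborhood in $S$ is a crossing band. One then checks that this is an ambient isotopy of $S$ in $S^{3}\setminus N(L)$ carrying $L$ to itself, that it leaves the Menasco structure untouched away from $M_i$, and that it reintroduces no forbidden configuration — the surface is now deliberately not in Menasco form only at the single ball $M_i$. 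I expect the genuine obstacle to be the middle step: classifying the local pictures of $S\cap F_{\pm}$ around a Menasco ball that survive the cleaning propositions, and carrying out the Euler-characteristic count carefully enough to guarantee that \emph{some} ball carries the innermost single-saddle configuration, rather than every ball being blocked by extra saddles or by long chains of intersection arcs winding past several balls. That combinatorial case analysis — not the final local isotopy, which is already visible in the figures — is where the work of Section 4.3 is meant to be spent.
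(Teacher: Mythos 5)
There is a genuine gap, and it sits exactly where you predicted it would: the middle combinatorial step. Two things go wrong. First, you run the count with $P^{*}=P\cap F$, i.e.\ with $A=\varnothing$, but the arcs of $A$ are the whole point of the paper's argument. A crossing band is produced by isotoping the neighborhood in $S$ of a \emph{flat or de-alternating} intersection arc that is parallel to a crossing into the adjacent Menasco ball (Figure \ref{Fi:getdesired}); a \emph{re-alternating} arc parallel to a crossing yields no crossing band, and none of the cleaning propositions forbids such arcs. The paper therefore argues by contradiction: if no crossing band can be obtained, then every intersection arc parallel to a crossing is re-alternating; collecting these $r$ arcs into $A$ and passing to $P^{*}=F\cap(P\cup A)$, one shows that each of the $n+r+2$ regions of $F\setminus P^{*}$ has at least two boundary segments free of intersection-arc endpoints --- the outermost-subregion analysis uses precisely the fact that any remaining arc cobounding a two-boundary-segment subregion would be parallel to a crossing or to an arc of $A$, hence parallel to a crossing after all. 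This leaves at most $(4n+4r)-(2n+2r+4)=2n+2r-4$ boundary segments, and hence at most that many segments, carrying endpoints, contradicting the fact that every one of the $2n+2r$ segments must carry an odd number. Your $A$-free count cannot reach this contradiction, because a region may be crossed by arbitrarily many re-alternating arcs parallel to crossings without violating any cleaning proposition, and your Euler-characteristic bookkeeping has no mechanism to exclude them.

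Second, your target configuration is the wrong one. You aim to locate an innermost single-saddle ball and push an over-disk triangle through it, but a crossing band is not a saddle (the paper is explicit that a surface with a crossing band is no longer in Menasco form), and nothing forces a saddle to exist: layered surfaces sit in Menasco form with no saddles at all, so an argument that hunts for a distinguished saddle is aimed at an object that may be absent. What the counting must produce is a flat or de-alternating arc of $S\cap F$ parallel to a crossing; the crossing band then comes from the local isotopy of Figure \ref{Fi:getdesired}, not from a manipulation of saddles. So the correct middle step is the contrapositive --- \emph{if no flat or de-alternating arc is parallel to a crossing, the segment count fails} --- rather than a direct classification of local pictures at a favorable ball. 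Your opening (clean embedding, cleaning propositions, use of meridianal boundary-incompressibility) and your closing local isotopy are consistent with the paper, but as written the proof does not close.
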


\begin{proof}For contradiction, choose $S$ and $P$ so that no Menasco ball contains a crossing band and so that $S$ cannot be isotoped to make it so.  Embed $S$ cleanly. No flat or de-alternating intersection arc  
will be parallel to a crossing in $P$.  Otherwise, we could have  
isotoped the neighborhood of that arc in $S$ into the parallel  
crossing, creating a crossing band, as in Figure \ref{Fi:getdesired}. 
\begin{figure}
\begin{center}
	\scalebox{.7}{\includegraphics{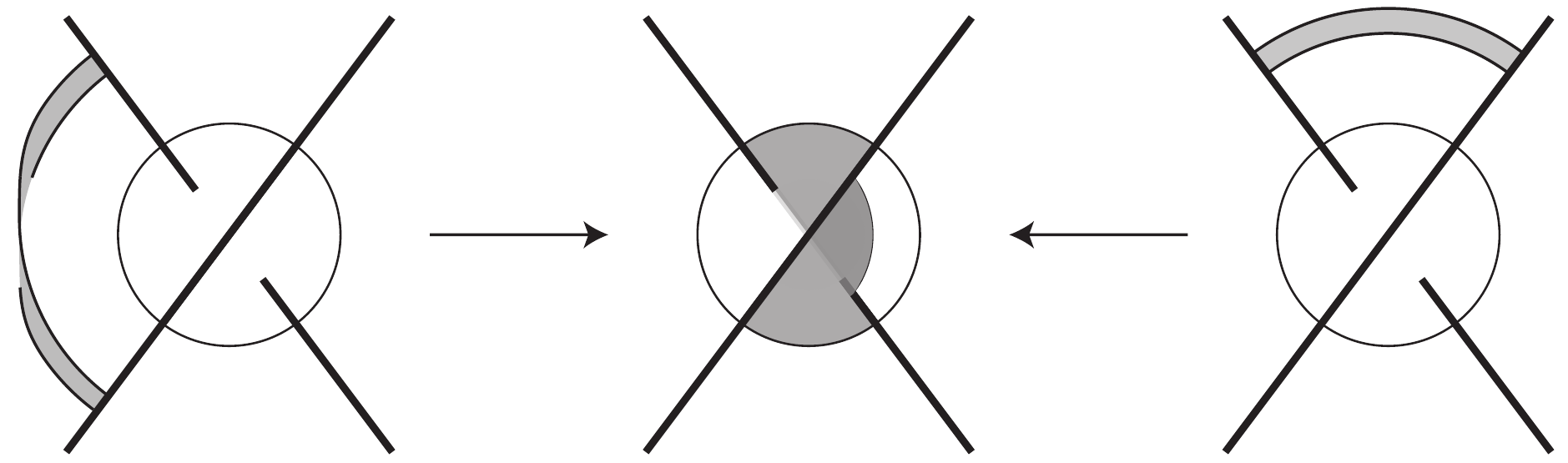}}
	\caption{Isotoping neighborhoods of flat and de-alternating arcs to obtain a crossing band.}\label{Fi:getdesired}
\end{center}
\end{figure}

Therefore, the only type of  
intersection arc that can be parallel to a crossing in $P$ is a re-alternating arc.  Let $A=\cup_{i=1}^{r}{A_i}$ be the collection of all re-alternating  
arcs in $S \cap F$ that are parallel to crossings.  Note that two such arcs may be  
parallel to the same crossing, perhaps even on the same side of the  
crossing, thus making them parallel to each other.

Now consider $R$, one of the $n+r+2$ regions in $F \backslash P^*$, where $P^* = P \cup A$.  Let $B = R \cap S$.

We claim that $\partial R$ contains at least two boundary segments  
which do not contain an endpoint of an arc in $B$.  If $B$ is empty, this  
follows trivially, since the boundary of $R$ must contain at least two  
boundary segments, by Proposition \ref{Prop:boundary}.  If $B$ is not empty, $R$ must contain at least two  
outermost subregions, each disjoint on its interior from $S$ and having a boundary consisting of exactly one arc in $B$ together with portions of at least two boundary segments of $P^*$.  

Suppose the boundary of some outermost subregion $Q$ contains portions of exactly two boundary segments of $P^*$.  Each of the endpoints of the arc in $B$ will lie on the interior of one of these boundary segments, so each boundary segment has exactly one endpoint on $\partial Q$.  At these endpoints they will both connect either to the same arc in $A$ or to the same Menasco ball boundary, either of which will necessarily be parallel to the arc in $B$.  Since all arcs in $A$ are parallel to crossings, the arc in $B$ must therefore be parallel to a crossing, giving a contradiction.  Therefore, the boundary of every outermost subregion contains portions of at least three boundary segments of $P^*$.  Two of these boundary segments will contain an endpoint of an arc in $B$ and will lie on the boundary of adjacent subregions as well.  All other boundary segments will lie entirely on the boundary of this subregion and will not contain any intersection arc endpoints.  Since either $B$ is empty or $R$ contains at least two outermost subregions, it follows that the boundary of every region of $F \backslash P^*$ contains at least two boundary arcs which contain no intersection arc endpoints.

Since $F \backslash P^*$ consists of $n+r+2$ regions, we have at least $2n+2r+4$  
boundary segments which \emph{do not} contain intersection arc endpoints.  It  
follows that we have at most $(4n+4r)-(2n+2r+4) = 2n+2r-4$ boundary  
segments (and certainly no more segments than this) that \emph{do} contain  
intersection arc endpoints. Since each of the $2n+2r$ segments of $P^* 
$ must contain such an endpoint, we have a contradiction.
\end{proof}

\begin{cor}       \label{cor:connected}
Any spanning surface for a non-splittable, alternating link is connected.
\end{cor}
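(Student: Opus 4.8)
The plan is to argue by contradiction, inducting on the number $n$ of crossings of a reduced alternating projection of $L$ and using Lemma~\ref{lemma:desired} as the tool that lowers $n$. The base case is $n=0$: a reduced alternating projection with no crossings that is not disconnected is a single circle, so (since an alternating link is splittable exactly when its projections are disconnected) a non-splittable $L$ with $n=0$ is the unknot, and a surface spanning the unknot with no closed components has all of its nonempty boundary on one component and so is connected.

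For the inductive step, assume the result for fewer than $n$ crossings, let $L$ be non-splittable with reduced alternating projection $P$ having $n\ge 1$ crossings, and suppose toward a contradiction that $S$ spans $L$ and is disconnected. First I would simplify $S$: while it admits a compression or a meridianal boundary-compression, perform one, discarding any closed component that appears. Each move yields a spanning surface for $L$, and none of them decreases the number of connected components --- a compression preserves or splits the component carrying the compressing curve; a meridianal boundary-compression has its arc $\beta$ with both endpoints on the single link component whose meridian is completed, hence on a single component of $S$, so it too preserves or splits that component; and discarding a closed component leaves untouched the (at least two) components that carry boundary. This process terminates, so we may take $S$ to be meridianally essential and still disconnected. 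Now Lemma~\ref{lemma:desired} lets us isotope $S$ so that at some Menasco ball $M_i$, belonging to a crossing $c$, the surface meets $M_i$ in a crossing band $D_c$. Replacing $D_c$ by the flat band determined by the corresponding smoothing of $c$ is a purely local modification (the reverse of the passage from a flat band to a crossing band described earlier), producing a surface $\widetilde S$ spanning the link $\widetilde L$ whose projection $\widetilde P$ is $P$ with $c$ smoothed. Since the old and new bands are disks glued to the rest of $S$ along the same pair of arcs, $\widetilde S$ has the same number of components as $S$ and still has no closed components; in particular $\widetilde S$ is disconnected.

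It remains to see that $\widetilde L$ falls within the inductive hypothesis. The band $D_c$ joins a pair of opposite regions at $c$, so $\widetilde P$ is obtained from $P$ by merging two regions of one color in a checkerboard coloring; since $P$ is alternating, this coloring can be chosen so that the two opposite regions joined at any crossing share a color, and merging two same-colored regions preserves this feature at every surviving crossing, so $\widetilde P$ is again alternating. Also $\widetilde P$ is connected: in a reduced projection no crossing is a cut vertex of the underlying $4$-valent planar graph --- by planarity a cut vertex would separate a cyclically contiguous block of edge-ends at that crossing, which is precisely the configuration of a nugatory crossing --- so smoothing $c$ cannot disconnect $P$. Deleting any nugatory crossings thereby created gives a reduced, connected, alternating projection of $\widetilde L$ with fewer than $n$ crossings; hence $\widetilde L$ is non-splittable and alternating. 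By the inductive hypothesis $\widetilde S$ must be connected, contradicting the previous paragraph, so $L$ has no disconnected spanning surface.

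The delicate part is exactly the bookkeeping just indicated: checking that each simplification of $S$ fails to decrease the number of components (in particular pinning down why the arc of a meridianal boundary-compression cannot run between two components of $S$), and checking that smoothing a crossing of a reduced alternating projection again yields a connected --- hence, for an alternating link, non-splittable --- alternating projection, together with the routine verification that $\widetilde S$ is genuinely a spanning surface for $\widetilde L$. I expect the stability of non-splittability under this smoothing, which rests on the observation that a reduced projection has no cut vertex, to be the main obstacle.
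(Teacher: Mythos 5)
Your proof is correct and follows essentially the same route as the paper's: induct on crossing number, pass to a meridianally essential surface by compressions and meridianal boundary-compressions (noting these cannot merge components), invoke Lemma~\ref{lemma:desired} to find a crossing band, smooth that crossing, and apply the inductive hypothesis to the resulting non-splittable alternating link with fewer crossings. You supply details the paper leaves to a citation of \cite{M1} (non-splittability of the smoothed link) or leaves implicit; the only quibble is that your checkerboard-coloring justification for the smoothed diagram being alternating is not quite the right mechanism (every diagram is checkerboard colorable), though the fact itself is standard and easily checked by tracking over/under along the two strands merged at the smoothing.
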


\begin{proof} This is immediate in the case of the trivial knot. Assume it to be true for all non-splittable alternating links of no more than $n$ crossings. Let $L$ be a non-splittable alternating link of $n+1$ crossings and let $S$ be a spanning surface. Then either $S$ is meridianally essential, or a series of compressions/meridianal boundary compressions takes $S$ to a meridianal essential spanning surface, $S'$. Choose a reduced alternating projection of $L$, and put $S'$ in Menasco form relative to that projection. Lemma \ref{lemma:desired} shows that $S'$ can be isotoped to have a crossing that contains a crossing band. Cutting open the link and surface at that crossing results in a spanning surface $S''$ for a non-splittable alternating link $L'$ with fewer crossings. Note that non-splittability of $L'$ utilizes a theorem from \cite {M1}. Since $S''$ is connected, so is $S'$ and hence $S$. \end{proof}

\begin{lemma}        

Any surface $S$ spanning the unknot with a boundary slope  
of $2k$ has $\chi = 1-|k|-2p$ for some $p \in \mathbb{N} \cup \{0\}$.

\end{lemma}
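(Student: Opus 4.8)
The plan is to induct on the nonnegative integer $1-\chi(S)$, repeatedly compressing and meridianally boundary-compressing $S$ until we reach a meridianally essential surface, which we then classify directly. Write $U$ for the unknot and $V=S^3\setminus N(U)$ for its exterior, a solid torus with $\pi_1(V)\cong\mathbb Z$. Throughout we may take $S$ connected: $U$ is a non-splittable alternating link, so Corollary~\ref{cor:connected} applies. When $1-\chi(S)=0$ the surface $S$ is a disk; its boundary is then an essential curve on $\partial V$ bounding a disk in $V$, hence a meridian of $V$, so the slope is $0$, i.e. $k=0$, and indeed $\chi(S)=1=1-|k|-2\cdot 0$.

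For the inductive step, first suppose $S$ is compressible. Compressing produces a surface with the same slope $2k$ and Euler characteristic $\chi(S)+2$; if this disconnects $S$, the component disjoint from $\partial S$ is a closed orientable surface in $V$ which is not a sphere (otherwise $S$ itself would contain a closed component), so discarding it leaves a connected spanning surface $S'$ of $U$ with slope $2k$ and $\chi(S')=\chi(S)+2+2j$ for some $j\ge 0$. Next suppose instead that $S$ is incompressible but meridianally boundary-compressible. A meridianal boundary compression yields (by the earlier proposition) a spanning surface $S'$ of $U$ --- again connected by Corollary~\ref{cor:connected}, and with no new closed components --- satisfying $\chi(S')=\chi(S)+1$ and slope $2k'$ with $k'=k\pm 1$. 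In both cases $1-\chi$ has strictly decreased, so the inductive hypothesis gives $\chi(S')=1-|k'|-2p'$ with $p'\ge 0$ (where $k'=k$ in the compression case). Solving for $\chi(S)$ yields $\chi(S)=1-|k|-2p$, where $2p$ equals $2p'+2+2j$ in the compression case and $2p'+1+(|k\pm1|-|k|)$ in the boundary-compression case; in the latter $1+(|k\pm1|-|k|)\in\{0,2\}$, so $p$ is a nonnegative integer in either case.

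It remains to handle the meridianally essential case. Then $S$ is incompressible, hence $\pi_1$-injective, so $\pi_1(S)$ embeds into $\pi_1(V)\cong\mathbb Z$; since $S$ is connected with exactly one boundary component, it is a disk or a Möbius band. The disk is handled as in the base case. If $S$ is a Möbius band then $\chi(S)=0$, and $U=\partial S$ is a $(2,n)$-cable ($n$ odd) of the core curve $c$ of $S$. Cabling a nontrivial companion gives a nontrivial satellite, so $c$ is unknotted; hence $U$ is the $(2,n)$-torus knot, forcing $n=\pm 1$, so $S$ is the standard half-twisted Möbius band on an unknotted core, whose slope is $\pm 2$. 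Thus $|k|=1$ and $\chi(S)=0=1-|k|-2\cdot 0$, completing the induction.

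The crux is the meridianally essential case, and in particular the claim that a Möbius band spanning the unknot has slope exactly $\pm 2$ rather than some larger even number; the satellite-knot argument is what excludes this. The only other point requiring care is the possibility that a compression disconnects $S$, which is why we used the absence of closed components to rule out a sphere component and thereby keep $1-\chi$ strictly decreasing.
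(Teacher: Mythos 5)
Your reduction to the meridianally essential case (induction on $1-\chi$ via compressions and meridianal boundary compressions, with the bookkeeping $1+(|k\pm1|-|k|)\in\{0,2\}$) is sound, and it parallels the paper's first step of compressing down to an incompressible surface $S'$ in the solid torus. But the crux of your argument --- the classification of meridianally essential spanning surfaces of the unknot as ``disk or M\"obius band'' --- contains a genuine gap, and it is exactly the point where the paper instead cites Tsau. The inference ``incompressible, hence $\pi_1$-injective'' is valid only for two-sided surfaces (via the loop theorem); for one-sided embedded surfaces geometric incompressibility does \emph{not} imply $\pi_1$-injectivity. And the solid torus really does contain, for every even boundary slope $2k$, a geometrically incompressible and boundary-incompressible (hence meridianally essential) nonorientable spanning surface of the unknot with $\chi=1-|k|$: for $|k|=2$ this is a once-punctured Klein bottle, whose fundamental group is free of rank $2$ and visibly does not inject into $\pi_1(V)\cong\mathbb{Z}$. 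These are precisely the surfaces whose Euler characteristics the paper pins down by quoting Proposition 1, Corollary 3 and Remark 1 of Tsau's classification of incompressible surfaces in solid tori. As written, your induction terminates at such a surface when $|k|\ge 2$ and your argument then gives no bound on its Euler characteristic (indeed, it wrongly asserts no such surface exists), so the formula $\chi=1-|k|-2p$ is not established for $|k|\ge 2$.

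Two smaller points. First, your reason that a closed component split off by a compression is not a sphere is misstated: the correct reason is that if the capped-off piece were a sphere, the piece cut off from $S$ would be a disk, contradicting essentiality of the compressing curve; it has nothing to do with $S$ containing a closed component beforehand. Second, your M\"obius band analysis (core must be unknotted since a genuine satellite of a nontrivial companion is knotted, hence $n=\pm1$ and slope $\pm2$) is fine and is a nice elementary substitute for the $|k|=1$ case of Tsau's result --- but it does not extend to the higher-slope essential surfaces, which is where the citation is doing real work.
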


\begin{proof} Note that we have already said that the boundary slope of a spanning surface $S$ must be even. A spanning surface for the trivial knot can be thought of as a surface properly embedded in a solid torus $V$, which is the complement of the trivial knot. That surface will have a boundary on the torus boundary that wraps once meridianally and $2|k|$ times longitudinally around the torus. Compress the surface until it becomes an incompressible surface $S'$. If $k=0$, the resulting surface must be a meridian disk. Otherwise, $S'$ must be nonorientable, since the only incompressible orientable surface with one boundary component properly embedded in the solid torus is a disk. 

By Proposition 1, Corollary 3 and Remark 1 of \cite{Tsau}, $S'$ must have Euler characteristic $1 - |k|$. Thus $S$ has Euler characteristic
$\chi = 1-|k|-2p$ for some $p \in \mathbb{N} \cup \{0\}$.
\end{proof}

Note also that a spanning surface for a knot is necessarily connected. We now proceed to the proof of the main theorem, which we restate here.

\begin{theorem}\label{main} 

Given an alternating projection $P(L)$ and a surface $S$ spanning $L$, we can construct a surface $T$ spanning $L$ with the same genus, orientability, and aggregate slope as $S$ such that $T$ is a basic layered surface with respect to $P$, except perhaps at a collection of added crosscaps and/or handles. When $S$ is orientable, $T$ can be chosen to be orientable with respect to the orientation that $L$ inherits from $S$. 
\end{theorem}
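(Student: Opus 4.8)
The plan is to induct on the number $n$ of crossings of the reduced alternating projection $P$. Since adding handles leaves genus/orientability/aggregate slope handled in a known way and adding crosscaps shifts slope by $\pm2$ while raising nonorientable genus, and since for the fixed boundary $L$ two spanning surfaces agree in genus exactly when they agree in Euler characteristic and orientability type, it suffices to keep track of $\chi(S)$, the orientability of $S$, and $l(S,L)$ throughout. The first move is to reduce to the case where $S$ is \emph{meridianally essential}: perform a maximal sequence of compressions and meridianal boundary-compressions to get a meridianally essential $S'$ spanning $L$, where (as recorded earlier) each compression raises $\chi$ by $2$ and fixes $l(S,L)$, and each meridianal boundary-compression raises $\chi$ by $1$ and changes $l(S,L)$ by $\pm2$. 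The crucial point is that each such move is \emph{reversed} by a local modification of the type allowed in the statement: undoing a compression is adding a handle, and undoing a meridianal boundary-compression is, up to isotopy, adding a crosscap. So it is enough to produce $T'$ for $S'$ and then re-apply the reversed moves to $T'$.

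For the inductive step, assume $S$ is meridianally essential and $n\geq1$. Put $S$ in Menasco form relative to $P$ and invoke Lemma~\ref{lemma:desired} to isotope $S$ so that some Menasco ball $M_c$ contains a crossing band. Cutting $L$ and $S$ along this band (smoothing the crossing $c$) produces a spanning surface $S''$ for an alternating link $L'$ with an $(n-1)$-crossing projection $P'$, with $\chi(S'')=\chi(S)+1$, with $l(S'',L')$ controlled, and with orientability controlled by whether the band was an orienting or de-orienting band. Apply the inductive hypothesis to $(S'',P')$ to get $T''$, a basic layered surface for $P'$ together with added crosscaps and handles matching the invariants of $S''$. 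Now reconstruct $T$ by re-inserting the half-twisted band at $c$: the underlying basic layered surface for $P'$ becomes a layered surface for $P$ (the half-twisted band at $c$ is precisely the layered-surface treatment of the crossing $c$), and re-attaching the same crosscaps and handles gives $T$ with invariants matching $S$. Should this layered surface fail to be basic — the band at $c$ joins a circle of the state to itself — one absorbs that self-joining half-twisted band into an added crosscap (a half-twisted band from a disk to itself is a M\"obius band), so $T$ remains a basic layered surface with added crosscaps and handles.

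For the base case $n=0$, we may work one non-splittable component at a time (Corollary~\ref{cor:connected}), so $L$ is an unknot; the basic layered surface for $P$ is a disk. The lemma on spanning surfaces of the unknot gives $\chi(S)=1-|k|-2p$ where $l(S,L)=2k$, and adding $|k|$ crosscaps (all chosen to shift the slope the same way) and then $p$ handles to the disk yields the required $T$. Orientability matches because an orientable spanning surface of a knot has slope $0$, so when $l(S,L)\neq0$ both $S$ and $T$ are forced nonorientable, while when $l(S,L)=0$ both are disks-with-handles (or are matched among the crosscap/handle additions as the Euler characteristic parities dictate).

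The main obstacle I anticipate is the orientability bookkeeping, especially in the orientable case: one must guarantee that the reduction to a meridianally essential surface, the cut along the crossing band, and the reconstruction all preserve ``orientable relative to the orientation $L$ inherits from $S$'' — boundary-compressions and band cuts can in general change orientability type, so this requires either showing that an orientable $S$ can be reduced using compressions alone, or tracking the induced orientation explicitly at each step. A secondary point requiring care is getting the aggregate slope of $T$ \emph{equal} to $l(S,L)$ rather than merely congruent mod $2$: the signs in the crossing band supplied by Lemma~\ref{lemma:desired}, in the cut, and in the crosscap/handle additions must be reconciled, and one should also confirm that re-inserting the band and absorbing any self-joining band into a crosscap does not disturb the bookkeeping.
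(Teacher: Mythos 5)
Your outline follows the same route as the paper: reduce to a meridianally essential surface by compressions and meridianal boundary-compressions (to be undone later by handles and crosscaps), use Lemma \ref{lemma:desired} to find a crossing band, cut, induct on crossing number, reglue, and handle the unknot base case via the Euler characteristic lemma for spanning surfaces of the unknot. However, the step you flag as ``the main obstacle I anticipate'' -- the orientability bookkeeping -- is a genuine gap, not merely a point requiring care, and you do not close it. The difficulty is concentrated at the regluing step: the cut surface $S^*$ and the inductively produced $T^*$ agree in orientability, but when the half-twisted crossing band is reinserted into each, you must show $S'$ and $T'$ still agree in orientability, and neither of the two remedies you suggest (reducing an orientable $S$ by compressions alone, or ``tracking the induced orientation'') is carried out. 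The paper's resolution is a specific two-case analysis on the orientation of $L^*$ near the removed band: in one case the orientation of $L^*$ extends over the twisted band compatibly with normal directions on both $S^*$ and $T^*$, so both $S'$ and $T'$ are orientable; in the other case an \emph{untwisted} band would be orientation-compatible, and Corollary \ref{cor:connected} (connectivity of spanning surfaces of non-splittable alternating links) supplies, in each surface, an annulus passing once through the band, so that substituting the actual half-twisted band creates a M\"obius band in \emph{both} $S'$ and $T'$, forcing both to be nonorientable. Without this argument (and in particular without invoking connectivity), one cannot rule out that regluing the band leaves $T'$ orientable while $S'$ is not, or vice versa.

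Two smaller points. First, your base-case recipe of ``$|k|$ crosscaps all shifting slope the same way plus $p$ handles'' fails to match orientability when $k=0$, $p>0$ and $S$ is nonorientable: your surface is then a disk with $p$ handles, which is orientable. The paper instead adds $k+p$ crosscaps of one handedness and $p$ of the other, which has the same effect on $\chi$ and slope but is nonorientable whenever $p>0$. Second, the exact (not merely mod $2$) matching of aggregate slope, which you also defer, is handled in the paper by tracking the twist $\tau$: cutting a positive crossing band decreases $\tau$ by exactly $1$ and regluing restores it, and since compressions preserve $\tau$ while each meridianal boundary-compression changes it by exactly $\pm 2$ (matched by the handedness of the corresponding added crosscap), the slopes of $S$ and $T$ agree on the nose. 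These are repairable, but as written the proposal does not constitute a complete proof.
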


\begin{proof}

We show this to be true for the trivial projection and proceed by induction on crossings in $P$.  

Let $S$ be an orientable surface spanning the unknot.  Since $S$ is orientable, either $\chi(S)=1$ or there exists a series of compressions $C_1, C_2, \ldots , C_q$ that takes $S$ to a disk.  Since compression does not affect boundary slope, $S$ must have a slope of zero.  The previous lemma then implies that $\chi(S)=1-2p$.  Adding $p$ handles to a disk  then creates a spanning surface with the same Euler Characteristic, slope, and orientability as $S$.

Now suppose that $S$ is a nonorientable surface spanning the unknot with a slope of $2k$ and an Euler Characteristic of $1-|k|-2p$.  Then, adding to the disk $k+p$ of the appropriate type of crosscap and $p$ of the other type of crosscap constructs a surface with the same Euler Characteristic, slope, and orientability as $S$.  Thus, Theorem \ref{main} holds for the trivial projection of the unknot.

\bigskip

We now proceed with the inductive proof of Theorem \ref{main}  
assuming it to be true for all Menasco projections with fewer than $n$ crossings. If $P$ is not reduced, we can reduce it and apply our inductive hypothesis to get a surface with the same characteristics as $S$ that appears as a basic layered surface, perhaps with a collection of added handles and/or crosscaps.  We can then carry this surface along as we un-reduce to $P$.  Assume therefore that $P$ is reduced.  

Suppose that our reduced projection $P$ has $n$ crossings and that $w(P)=y$ and $lk(L)=z$.  Also suppose that a surface $S$ spans $L$ with an aggregate slope of $l$ and an Euler Characteristic of $x$.  Recall that $\tau(S,P)=l-(y-2z)$.  Begin by performing a series of compressions/meridianal boundary compressions taking $S$ to a meridianally essential surface $S'$, also spanning $L$.  By Lemma \ref{lemma:desired}, we can isotope $S' $ to have a crossing band at a crossing in $P$.  Without loss of  
generality, suppose that this is a positively twisted 
crossing.  Cutting $S'$ at this crossing as in Figure \ref{Fi:cut}
 produces a new  
surface $S^*$ spanning a link $L^*$ in an alternating projection $P^*$ with $n-1$ crossings such that $\tau(S^*,P^*)=l-(y-2z)-1$ and $\chi(S^*)=x+1$.   Notice that this projection may not be reduced. Reduce it to a projection $P^{**}$.

By our inductive hypothesis, we can create a surface with the same  
characteristics as $S^*$ that appears layered with respect to $P^{**}$,  
except perhaps at a collection of added handles or crosscaps.  We construct such a surface, stacking all inner disks above any outer disks containing them. 

Call this surface $T^*$.  We can then carry $T^*$ along as a layered surface stacked in this manner while we un-reduce from $P^{**}$ to $P^*$. Recall that $\chi(T^*)=\chi(S^*)=x+1$.  Also, since $T^*$ and $S^*$ have the same aggregate slope, $\tau(T^*,P^*)=\tau(S^*,P^*)=l-(y-2z)-1$.  At the removed crossing, we then glue to $T^*$ a crossing band, which we know to be possible because of the particular layering of $T^*$.  This gives a layered surface $T'$ spanning $L$ with $\chi(T')=(x+1)-1=x$ and $\tau(T,P)=[l-(y-2z)-1]+1=l-(y-2z)$.  Since $\tau(T',P)=\tau(S',P)$, we know that $l(T',L)=l(S',L)$. Adding the appropriate handles/crosscaps, corresponding to the initial compressions/meridianal boundary compressions of $S$ produces a surface $T$ spanning $L$ that appears layered with respect to $P$, except at a collection of handles or crosscaps, and that has the same genus and aggregate slope as $S$. 

\begin{figure}
\begin{center}
	\scalebox{.3}{\includegraphics{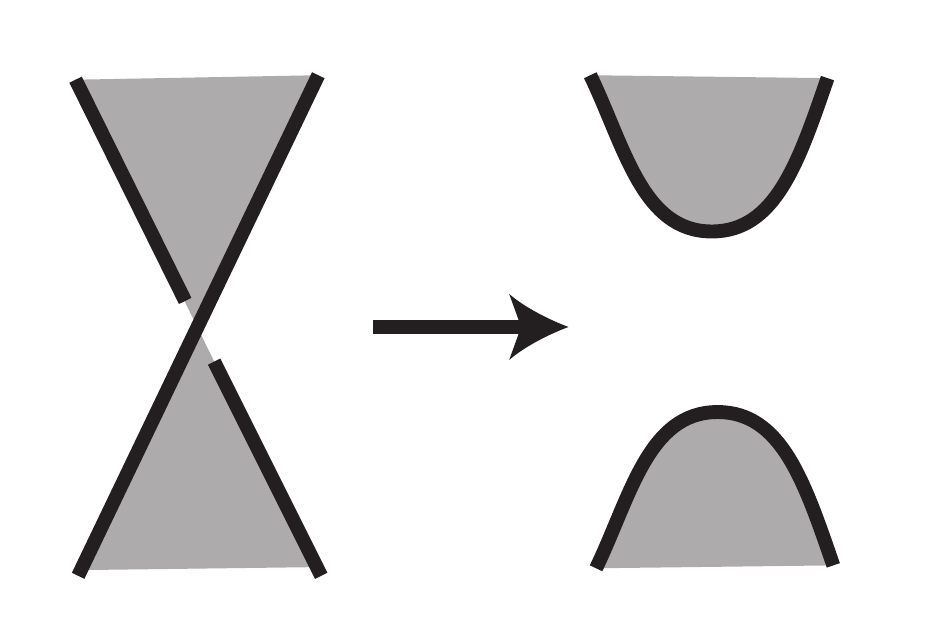}}
	\caption{Cutting a crossing containing a crossing band yields a new surface. The process can also be reversed.}\label{Fi:cut}
\end{center}
\end{figure}

It remains to show that $T$ has the same orientability as $S$. If $S^*$ and hence $T^*$ are nonorientable, then so are $S$ and $T$. Also, if there are any meridianal boundary compressions in going from $S$ to $S'$, then both $S$ and $T$ are nonorientable. So assume instead that $S^*$ and $T^*$ are both orientable, and no meridianal boundary compressions occurred in going from $S$ to $S'$. We have two cases to consider, as appearing in Figure \ref{Fi:GluingOrientations}, depending on the orientation of $L^*$ near the removed crossing band. In the case depicted in Figure \ref{Fi:GluingOrientations} (a), we can extend the orientation on $L^*$ to an orientation on $L$, and we can assign a normal direction to the crossing band as we glue it back in that agrees with the normal direction on both $S^*$ and $T^*$ to both sides of the crossing. Thus, both $S'$ and $T'$ will be orientable  with respect to this orientation on $L$, as will $S$ and $T$. Similarly, as in the case depicted in Figure \ref{Fi:GluingOrientations} (b), if we were to glue in an {\it untwisted} band to both $S^*$ and to $T^*$ as appears in Figure \ref{Fi:GluingOrientations} (c), we would obtain a pair of orientable surfaces. Corollary \ref{cor:connected} implies that each such surface contains an annulus that passes through this untwisted band once. Replacing the untwisted band with the actual crossing band to obtain $S'$ and $T'$ will result in the fact each contains a Mobius band, implying that both $S'$ and $T'$ are nonorientable, and therefore implying the same for $S$ and $T$.

\begin{figure}
\begin{center}
	\scalebox{.7}{\includegraphics{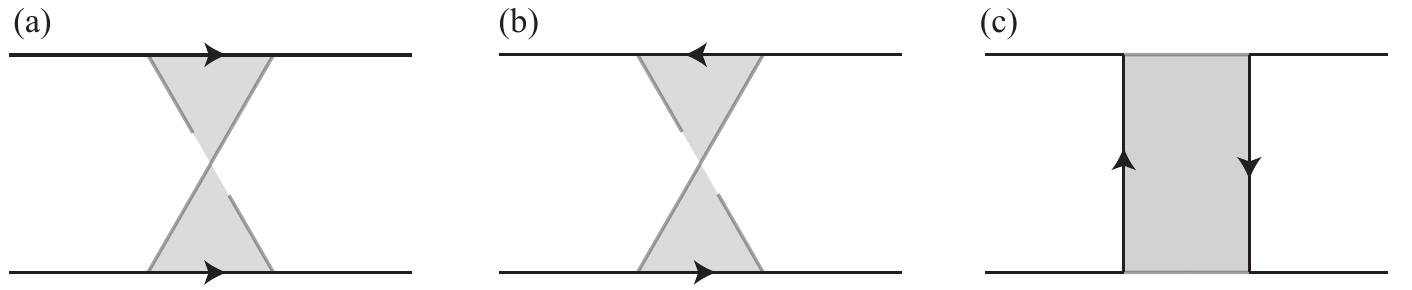}}
	\caption{(a) and (b): Up to symmetry, the two possibilities for the orientation on $L$ near the removed crossing band. (c): In the case of (b), gluing an untwisted band extends the orientation on $L^*$ to one on $L$.}\label{Fi:GluingOrientations}
\end{center}
\end{figure}

Thus, we have proven that our inductive construction preserves orientability as well as Euler Characteristic and aggregate slope. 
\bigskip

All we have left to show is that our layered surface $T$ can be a \emph{basic} layered surface (potentially with added crosscaps or handles), one in which no crossing connects a circle to itself.  Suppose $T$ is a non-basic layered surface such that in its construction, we stacked all inner disks above outer ones, as described earlier.  $T$ is then meridianally boundary-compressible.  Performing this meridianal boundary compression will take $T$ to a layered surface in which fewer crossings connect any circle to itself than in $T$.  Performing in turn all possible meridianal boundary compressions of this sort and pairing each with the addition of an appropriate crosscap will create a new layered surface (with added crosscaps and perhaps handles as well) that is in fact basic and that has the same Euler Characteristic, aggregate slope, and orientability as $T$ and therefore as $S$. 
\end{proof}

\section{Implications} \label{S:Implications}

We now provide some corollaries to Theorem \ref{main}.  

\begin{cor}       
Given an alternating projection of a link,  the minimal genus basic layered surface (orientable or nonorientable) generated from that projection is a minimal genus spanning surface for that link. If a minimal genus basic layered surface is nonorientable, it realizes the nonorientable genus for the link. If all minimal genus basic layered surfaces are orientable, the nonorientable genus is 1/2 greater than this minimal genus.\end{cor}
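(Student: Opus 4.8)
The plan is to derive everything from Theorem \ref{main} after translating ``genus'' into Euler characteristic. The key preliminary observation is that for a surface spanning $L$ with $n=|L|$ boundary components, its genus (the orientable genus if it is orientable, or the nonorientable genus $=\tfrac12(\text{crosscap number})$ if it is not) equals $(2-n-\chi)/2$ in both cases; hence a spanning surface has minimal genus precisely when its Euler characteristic is maximal. Among all basic layered surfaces generated from $P$ there are only finitely many possible Euler characteristics (at most $2^n$ crossing-split choices, and the choice of disk heights affects neither $\chi$, orientability, nor aggregate slope), so a minimal genus basic layered surface exists. Let $\chi_{\max}$ denote the largest Euler characteristic of any surface spanning $L$.

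First I would prove the opening claim. Let $S$ be a minimal genus spanning surface, so $\chi(S)=\chi_{\max}$. By Theorem \ref{main} there is a surface $T$ spanning $L$ with $\chi(T)=\chi(S)$ that is a basic layered surface $T_0$ with respect to $P$ together with a collection of added crosscaps and handles. Since each added crosscap decreases $\chi$ by $1$ and each added handle decreases it by $2$, we get $\chi(T_0)\ge\chi(T)=\chi_{\max}$; but $T_0$ is itself a spanning surface, so $\chi(T_0)\le\chi_{\max}$, forcing $\chi(T_0)=\chi_{\max}$. Thus some basic layered surface from $P$ attains $\chi_{\max}$, and therefore the minimal genus basic layered surface is a minimal genus spanning surface for $L$.

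Next, the nonorientable statements. If a minimal genus basic layered surface $B$ is nonorientable, then $B$ is a nonorientable spanning surface with $\chi(B)=\chi_{\max}$, so its nonorientable genus $(2-n-\chi_{\max})/2$ is at most that of every nonorientable spanning surface (all of which have $\chi\le\chi_{\max}$); hence $B$ realizes the nonorientable genus of $L$. If instead every minimal genus basic layered surface is orientable, then adding a single crosscap to one such surface yields a nonorientable spanning surface with $\chi=\chi_{\max}-1$, so the nonorientable genus is at most $(2-n-\chi_{\max})/2+\tfrac12$. For the matching lower bound, suppose some nonorientable spanning surface $S$ had nonorientable genus $\le(2-n-\chi_{\max})/2$; then $\chi(S)\ge\chi_{\max}$, so $\chi(S)=\chi_{\max}$, and applying Theorem \ref{main} to $S$ produces a \emph{nonorientable} $T=T_0\#(\text{crosscaps/handles})$ with $\chi(T)=\chi_{\max}$. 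Squeezing $\chi(T_0)$ between $\chi(T)=\chi_{\max}$ and $\chi_{\max}$ forces $\chi(T_0)=\chi_{\max}$, so no crosscaps or handles were added and $T=T_0$ is a nonorientable basic layered surface of Euler characteristic $\chi_{\max}$, i.e.\ a nonorientable minimal genus basic layered surface, contradicting the hypothesis. Since nonorientable genus is a half-integer, the two bounds pin it down to exactly $(2-n-\chi_{\max})/2+\tfrac12$, which is $\tfrac12$ more than the minimal genus.

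The routine points to handle carefully are that adding crosscaps and handles genuinely keeps a spanning surface a spanning surface with the stated effect on $\chi$ (recorded earlier in the paper) and the bookkeeping identifying genus with $(2-n-\chi)/2$ in both the orientable and nonorientable cases. The only place any real subtlety enters is the lower bound in the last case, where one must rule out a nonorientable spanning surface that is as efficient as the orientable minimal genus layered surfaces; this is exactly where the ``no crosscaps or handles needed'' collapse does the work, so that is the step I would write out most explicitly.
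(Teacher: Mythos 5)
Your proposal is correct, and it is exactly the intended derivation: the paper states this corollary without a written proof, as an immediate consequence of Theorem \ref{main} once genus is converted to Euler characteristic via $g=(2-n-\chi)/2$ (valid in both the orientable and nonorientable cases by the paper's definition of crosscap number) and one notes that added crosscaps and handles only decrease $\chi$. Your explicit treatment of the lower bound in the third case --- forcing the ``no crosscaps or handles added'' collapse to produce a nonorientable basic layered surface of maximal $\chi$ --- is the right place to put the weight, and the argument is complete.
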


\begin{cor}       

There exists a surface $S$ spanning $L$ in an alternating projection $P$ with Euler Characteristic $x$ and aggregate slope $l$ if and only if there exists a basic layered surface $T$ generated from $P$ with Euler Characteristic $x^*$ and aggregate slope $l^*$ such that $x=x^*+ \frac{|l-l^*|}{2}+2p$ for some $p \in \mathbb{N} \cup \{0\}$.  Furthermore, $S$ can be nonorientable     if and only if \emph{either} such a $T$ is nonorientable, $l \neq l'$, or $p>0$.  $S$ can be orientable if and only if such a $T$ is orientable and $l=l^*$.
\end{cor}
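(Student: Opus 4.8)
The corollary is essentially a bookkeeping restatement of Theorem \ref{main}, translated through the two local moves introduced in Section \ref{Sec:layered}: adding a crosscap, which changes the Euler characteristic by one and the aggregate slope by $\pm 2$, and adding a handle, which changes the Euler characteristic by two and leaves the aggregate slope fixed (see Figure \ref{Fi:crosscap} and the discussion after it), together with their inverses, meridianal boundary compressions and compressions, whose effects on $\chi$ and aggregate slope were noted earlier. The plan is to prove the two directions of the biconditional separately and then read the orientability clauses off of the same count.

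For the forward direction, suppose $S$ spans $L$ in $P$ with $\chi(S)=x$ and aggregate slope $l$. I would apply Theorem \ref{main} to obtain a surface $T'$ spanning $L$ with the same genus as $S$ --- hence the same Euler characteristic, since both span $L$ and so have the same number of boundary components --- the same orientability, and the same aggregate slope, where $T'$ consists of a basic layered surface $T$ with a collection of added crosscaps and handles. I take this $T$ as the basic layered surface of the statement and set $x^{*}=\chi(T)$, with $l^{*}$ its aggregate slope. Each added handle or crosscap is one of the moves above; since only the crosscaps move the slope, and each by $\pm 2$, passing between $l^{*}$ and $l$ uses a number of crosscaps congruent to $\tfrac{|l-l^{*}|}{2}$ modulo $2$, the surplus over $\tfrac{|l-l^{*}|}{2}$ coming in cancelling pairs --- this parity observation is the one point meriting an explicit sentence. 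Tallying the Euler-characteristic changes against this count yields the asserted relation $x=x^{*}+\tfrac{|l-l^{*}|}{2}+2p$ with $p\in\mathbb{N}\cup\{0\}$.

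For the converse, given a basic layered surface $T$ generated from $P$ with $\chi(T)=x^{*}$ and aggregate slope $l^{*}$, and a nonnegative integer $p$ with $x=x^{*}+\tfrac{|l-l^{*}|}{2}+2p$, I would construct $S$ directly by applying to $T$ the appropriate local moves: $\tfrac{|l-l^{*}|}{2}$ slope-changing ones to bring the aggregate slope to $l$, and $p$ slope-preserving ones to reach Euler characteristic $x$. Each move is local, can be performed without disturbing $P$, and carries a spanning surface to a spanning surface; since a basic layered surface is itself a spanning surface for $L$ realizing $P$ (Section \ref{Sec:layered}), the output is a spanning surface $S$ with $\chi(S)=x$ and aggregate slope $l$, as needed.

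Finally, for the orientability clauses I would use that a single crosscap destroys orientability while handles preserve it. In the converse construction at least one crosscap is forced exactly when $l\neq l^{*}$, and the slope-preserving moves may always be taken to be cancelling crosscap pairs, so $S$ may be chosen nonorientable exactly when such a $T$ is nonorientable, or $l\neq l^{*}$, or $p>0$; and $S$ may be chosen orientable exactly when such a $T$ is orientable, $l=l^{*}$, and the slope-preserving moves are taken to be handles --- that is, exactly when $T$ is orientable and $l=l^{*}$. To reconcile this with the forward direction I would invoke the orientable case of Theorem \ref{main}, which supplies an orientable $T'$, hence a crosscap-free $T$ with $l^{*}=l$; as in the proof of Theorem \ref{main} I would appeal to Corollary \ref{cor:connected} to exclude spurious behavior from disconnected pieces. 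The step I expect to be the main obstacle is precisely this orientability accounting --- confirming that the crosscap-free case really does force $l=l^{*}$ in the forward direction, and that the ``$p>0$'' freedom genuinely toggles orientability without perturbing $x$ or $l$ --- since the Euler-characteristic and slope arithmetic is routine once the cancelling-pairs parity remark is in place.
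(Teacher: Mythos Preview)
The paper gives no proof for this corollary; it is left as an immediate consequence of Theorem \ref{main} together with the effect of crosscaps and handles on Euler characteristic and aggregate slope recorded in Section \ref{Sec:layered}. Your outline is exactly that intended argument, and your parity observation---that the number of crosscaps must be $\tfrac{|l-l^*|}{2}$ plus an even surplus of cancelling pairs---is the right refinement.

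There is one concrete gap, though: your own arithmetic does not produce the formula as stated. Adding a crosscap \emph{decreases} $\chi$ by one and adding a handle \emph{decreases} $\chi$ by two, so if $T'$ is built from the basic layered surface $T$ (with $\chi(T)=x^*$) by adding $c$ crosscaps and $h$ handles, then $x=\chi(T')=x^*-c-2h$. Combined with your count $c=\tfrac{|l-l^*|}{2}+2k$ for some $k\ge 0$, this gives
\[
x=x^*-\tfrac{|l-l^*|}{2}-2p,\qquad p=k+h\ge 0,
\]
the opposite sign from the printed statement. The corollary as written in the paper evidently carries a sign typo, and the sentence ``tallying the Euler-characteristic changes against this count yields the asserted relation'' is exactly where a careful proof should have flagged the discrepancy rather than glossed over it. With the sign corrected, your converse construction and your orientability accounting go through as written; the appeal to Corollary \ref{cor:connected} is unnecessary here, since the bookkeeping is purely local and does not depend on connectedness.
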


Note that in \cite {MT}, the authors point out that no example is known of two mutant knots with distinct crosscap number, unlike what occurs for orientable genus, where the Kinoshita-Terasaka mutants provide just such an example. It is known that for alternating knots and links, orientable genus is preserved by mutation.

\begin{cor}
If a knot is alternating, any mutant of it has the same crosscap number as it does.
\end{cor}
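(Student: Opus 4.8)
The plan is to realize the mutation by an ambient homeomorphism of $S^3$ supported in the ball bounded by a Conway sphere, where that Conway sphere has first been put in standard position relative to a reduced alternating diagram, and then to observe that this homeomorphism carries basic layered surfaces to basic layered surfaces without changing Euler characteristic or orientability. Since Theorem~\ref{main} (through the corollaries above) expresses the crosscap number purely in terms of data read off from basic layered surfaces, the equality of crosscap numbers will follow. So, first I would fix a reduced alternating projection $P$ of the alternating knot $K$ and a Conway sphere $C$ realizing the prescribed mutation; by the standard-position results for alternating diagrams one may isotope $C$ so that $C\cap S^2$ is a single circle $\gamma$ disjoint from all crossings and meeting $P$ transversely in four points, with $C$ bounding a tangle to one side. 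The mutant $K'$ then inherits a reduced alternating projection $P'$ obtained by replacing that tangle, so Theorem~\ref{main} and the corollaries above apply to $K$ and to $K'$ alike.

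Next I would record the exact consequence of Theorem~\ref{main} we need. For a state $\sigma$ of a reduced alternating diagram $Q$ --- that is, a choice of an $A$- or $B$-split at each crossing --- write $T_\sigma$ for an associated layered surface; since disk heights affect neither Euler characteristic, orientability, nor aggregate slope, the pair $\bigl(\chi(T_\sigma),\ \text{orientability of }T_\sigma\bigr)$ depends only on $\sigma$, and $\sigma$ is \emph{basic} precisely when no crossing band of $T_\sigma$ joins a state circle to itself. Adding a crosscap lowers $\chi$ by $1$ and adding a handle lowers it by $2$, so Theorem~\ref{main} together with the first corollary above shows the following: writing $M(Q)=\max\{\chi(T_\sigma):\sigma\text{ basic}\}$, the crosscap number of the knot spanned by $Q$ is $1-M(Q)$ if some basic $\sigma$ with $\chi(T_\sigma)=M(Q)$ has $T_\sigma$ nonorientable, and is $2-M(Q)$ otherwise. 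In particular this crosscap number is a function of the set $D(Q)=\bigl\{\bigl(\chi(T_\sigma),\ \text{orientability of }T_\sigma\bigr):\sigma\text{ basic}\bigr\}$.

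Then I would transport layered surfaces across the mutation. The mutation is induced by an ambient homeomorphism $h$ of $S^3$ equal to the identity outside the ball bounded by $C$ and to a $180^\circ$ rotation inside it; because $\gamma$ misses every crossing, $h$ carries the crossings of $P$ bijectively onto those of $P'$, hence induces a bijection $\sigma\leftrightarrow\sigma'$ between the states of $P$ and of $P'$. Given a basic state $\sigma$, the image $h(T_\sigma)$ is again a union of disks spanning the state circles of $\sigma'$, joined by half-twisted bands at the crossings of $P'$; after re-choosing disk heights (which changes nothing) it is a layered surface realizing $\sigma'$, and a crossing band of $h(T_\sigma)$ joins a circle to itself exactly when the corresponding crossing band of $T_\sigma$ does, so $\sigma'$ is basic too. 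As $h$ is a homeomorphism, $\chi(T_{\sigma'})=\chi(T_\sigma)$ and $T_{\sigma'}$ is orientable iff $T_\sigma$ is. Hence $D(P)=D(P')$, and by the previous paragraph $K$ and $K'$ have equal crosscap numbers.

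The main obstacle is the first step: one must invoke the structure theory for alternating diagrams (in the spirit of Menasco's work) to be sure the mutating Conway sphere can be put in the stated standard position relative to $P$ --- with $\gamma$ crossing-free and bounding a tangle --- so that the mutation is genuinely an alternating-diagram move and the crossing sets of $P$ and $P'$ correspond bijectively. Everything after that is routine. If one wishes, the invariance of $\chi$ under the transport can even be checked by hand: the state circles of $\sigma$ meet $\gamma$ in those four points and form a non-crossing matching of them inside $\gamma$ and another outside, and conjugation by each of the three Conway rotations fixes both of the two non-crossing matchings of four cyclically ordered points, so neither the number of state circles meeting $\gamma$, nor therefore the total number of state circles, is altered by the mutation.
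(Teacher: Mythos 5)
Your overall strategy coincides with the paper's: use Theorem~\ref{main} (through the corollaries) to express the crosscap number of an alternating knot purely in terms of the Euler characteristics and orientabilities of its basic layered surfaces, and then transport that data across the mutation. Your second paragraph's reduction is correct (for a knot the crosscap number is $1-M$ or $2-M$ as you say), and your third paragraph's transport argument is sound for a mutation realized by rotating a tangle bounded by a visible circle in the diagram.

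The genuine gap is exactly where you flag ``the main obstacle'': you assert that the Conway sphere $C$ can always be isotoped so that $C\cap S^2$ is a \emph{single} crossing-free circle meeting the diagram in four points. Menasco's structure theory does not guarantee this. The standard position for a 4-punctured sphere in an alternating knot complement has two possible forms, and the paper's proof explicitly invokes both: either a single circle meeting the knot in four points, or \emph{two} circles, each meeting the knot in two points, with the sphere passing through two crossing balls in saddles. Such ``hidden'' Conway spheres genuinely occur, and for them the mutation is not a $180^\circ$ rotation of a visible sub-tangle of $P$; your bijection between states of $P$ and $P'$ and your homeomorphism transport of layered surfaces do not apply as written. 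To close the gap you would need either to show that every mutation of an alternating knot is a composition of mutations along visible Conway circles, or to give a separate analysis of how basic layered surfaces (and hence the set $D(Q)$) behave under mutation along a two-circle-with-saddles sphere --- which is the case the paper's (admittedly terse) proof is careful to include.
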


\begin{proof} This follows from Theorem \ref{main} together with the fact that a mutant of an alternating knot is also alternating, and must come from a sequence of mutations. Each of the mutations is  along a 4-punctured sphere that  intersects the projection plane in either a single circle intersecting the knot at four points or in two circles, each intersecting the knot at two points such that the sphere has two saddles. This follows from \cite{M1}. The collection of genera paired with orientability of spanning surfaces of the knot is preserved by mutancy along such 4-punctured spheres.
\end{proof}

 In \cite{HT}, the authors prove that a 2-bridge knot cannot have two minimal nonorientable genus  spanning surfaces, one boundary-incompressible and one boundary-compressible. They ask whether or not this is true in general. We answer that question in the negative with the following example.

{\bf Example.} Consider the knot and surfaces that appear in Figure \ref{Fi:knot}. By applying the algorithm of Section \ref{Sec:layered}, we find that a spanning surface $S$ with greatest possible Euler characteristic is orientable of genus 2, and therefore the  nonorientable genus is 5/2, a surface for which can be obtained by adding a crosscap to the surface $S$. This surface $S_1$, which appears at the top of Figure \ref{Fi:knot}, is incompressible, as any compression would yield a spanning surface with genus less than the minimum 2, and it is obviously boundary-compressible. However, we can also obtain the second surface $S_2$ appearing in Figure \ref{Fi:knot} by sliding one strand around the far side of the projection sphere and changing one crossing split. It is a non-basic layered surface. By situating the bands connecting top and bottom to be alternately over and under the main disk, we obtain a surface that is boundary-incompressible.

\begin{figure}
\begin{center}
	\scalebox{.5}{\includegraphics{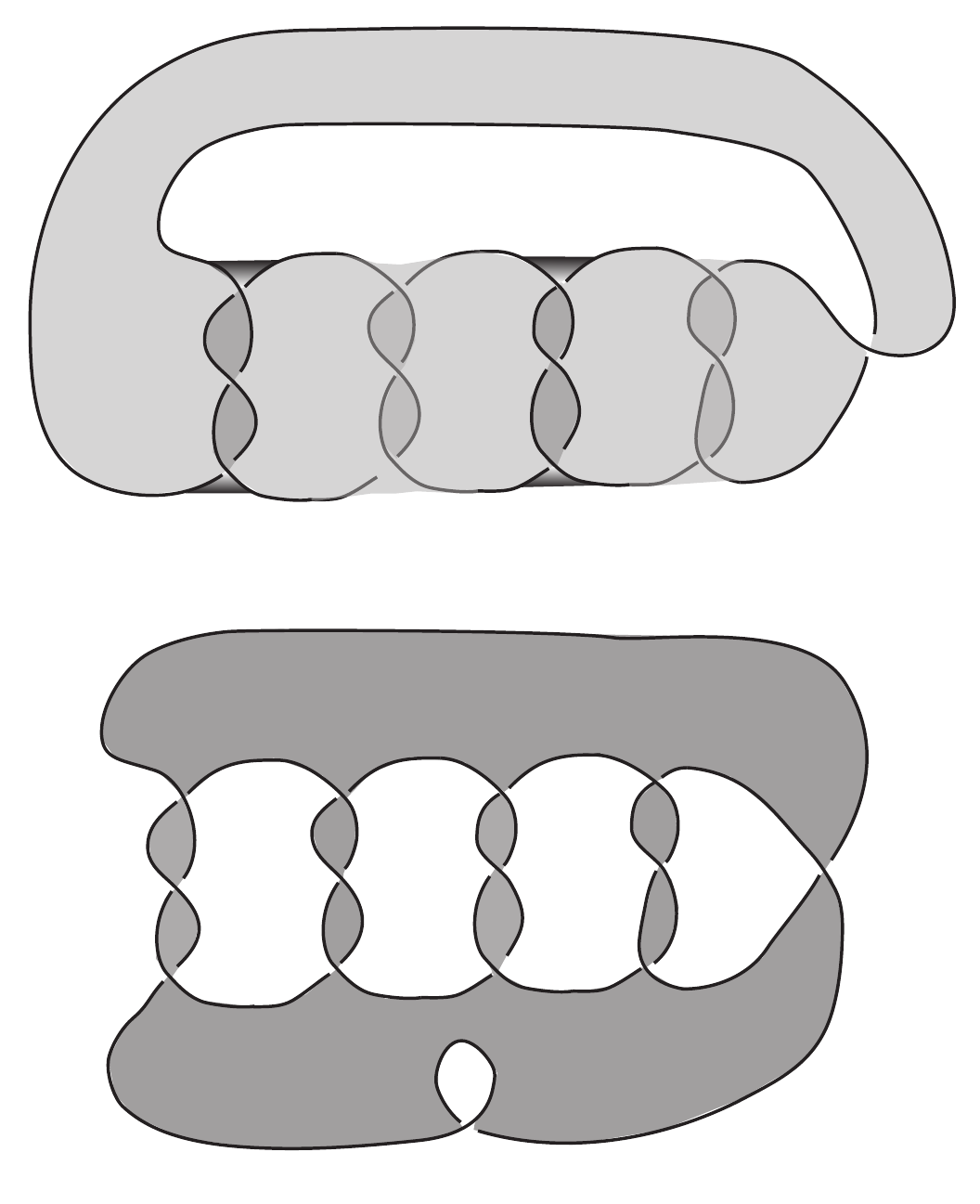}}
	\caption{An example of a knot such that it has minimal genus nonorientable surfaces, one that is  boundary-compressible and one that is  boundary-incompressible.}\label{Fi:knot}
\end{center}
\end{figure}  

\begin{lemma} The surface $S_2$ is incompressible and boundary-incompressible.
\end{lemma}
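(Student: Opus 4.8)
The plan is to prove incompressibility by an Euler-characteristic count and then to reduce boundary-incompressibility to a single case, which is disposed of using the explicit picture of $S_2$ in Figure~\ref{Fi:knot}.

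For incompressibility, recall that the Minimal Genus Algorithm together with Theorem~\ref{main} gives orientable genus $2$ and nonorientable genus $5/2$ for this knot, so every spanning surface of it has Euler characteristic at most $-3$: an orientable one has $\chi=1-2g\le-3$, and a nonorientable one has $\chi=1-c\le-4$, where $c\ge5$ is its crosscap number. Suppose $D$ were a compressing disk for $S_2$. Compressing along $D$ leaves $S_2\cap\partial N(K)$ unchanged and raises $\chi$ by $2$; after discarding any closed components --- each of which is orientable, since no closed nonorientable surface embeds in $S^3$, and is not a $2$-sphere, since $\partial D$ is essential in $S_2$, hence contributes $\chi\le0$ --- one is left with a spanning surface of Euler characteristic at least $-2$, contradicting the bound $-3$. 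So $S_2$ is incompressible.

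For boundary-incompressibility, suppose $D$ is a boundary-compressing disk for $S_2$, with $\partial D=\alpha\cup\beta$, $\beta=D\cap S_2$ an essential arc, and $\alpha\subset\partial N(K)$. Boundary-compressing raises $\chi$ by $1$, so, discarding inessential arcs and closed components as before, it yields a spanning surface $\Sigma$ with $\chi(\Sigma)=-3$. Since every spanning surface has $\chi\le-3$, this bound is attained, so $\Sigma$ is either orientable of genus $2$ or nonorientable of crosscap number $4$; the latter has nonorientable genus $2<5/2$, contradicting the established value, so $\Sigma$ is an orientable genus-$2$ Seifert surface and in particular has boundary slope $0$. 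Thus any boundary-compression of $S_2$ would convert the nonorientable surface $S_2$ into a minimal-genus Seifert surface. To see this cannot happen, I would use the concrete form of $S_2$ in Figure~\ref{Fi:knot}: a main disk $D_0$ lying on the projection sphere $S^2$, with bands joining the top and bottom portions of $\partial S_2$ and with consecutive bands passing \emph{alternately over and under} $D_0$. Given such a $D$, first isotope it (using the incompressibility of $S_2$ just proved, and irreducibility of $S^3$) to eliminate all closed curves of $D\cap S^2$, then analyze the arcs of $D\cap S^2$ together with the two halves of $D$ in the balls above and below $S^2$ by an innermost/outermost-arc argument. Because adjacent bands lie on opposite sides of $D_0$, there is no product region between a band and $D_0$, nor between two consecutive bands, into which $\beta$ could be isotoped toward $\partial N(K)$; pushing the analysis through forces $D$ to meet the interior of $D_0$, contradicting that $D$ meets $S_2$ only along $\beta$. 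Hence $S_2$ has no boundary-compressing disk.

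The main obstacle is this last geometric step: one must make the innermost-disk bookkeeping precise enough to confirm that it is exactly the alternating over/under arrangement of the bands --- not the Euler characteristic, orientability, and slope of $S_2$, all of which it shares with the boundary-compressible $S_1$ --- that obstructs \emph{every} boundary-compressing disk. A secondary point needing care is to check that a general boundary-compression of a spanning surface of a knot reduces, after removing inessential arcs and capped-off closed components, to the meridional situation treated in the reduction above, so that the Euler-characteristic and orientability argument genuinely applies.
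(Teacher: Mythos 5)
Your incompressibility argument is essentially the paper's (a compression would raise $\chi$ and produce a spanning surface beating the minimal genus $2$), and your care with closed components is fine. The genuine gap is in boundary-incompressibility, and you have named it yourself: the entire content of the lemma is the final geometric step, which you only sketch. Your preliminary reduction --- that a boundary compression would have to produce a minimal-genus Seifert surface --- is a reasonable observation, but you then abandon it; the innermost-arc analysis you outline does not use it, and you never carry that analysis out. The assertion that ``pushing the analysis through forces $D$ to meet the interior of $D_0$'' is precisely what must be proved, and the reason you offer (adjacent bands lie on opposite sides of $D_0$, so there is no product region) cannot be the whole story: a single half-twisted band attached to a disk is exactly the local picture that makes $S_1$ boundary-\emph{compressible}, so any correct argument must see the \emph{amount} of twisting in the bands of $S_2$, not just their over/under arrangement. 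An argument that never invokes the triple half-twists would ``prove'' too much.

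For comparison, the paper's proof intersects a putative boundary-compressing disk $D$ with an explicit sphere $Q$ meeting the projection plane in the circle of Figure \ref{Fi:boundary}, arranges the two points of $\partial D\cap\partial S_2$ near the point $A$, takes an outermost arc of $D\cap Q$ on $D$ cutting off a subdisk $D'$, and then uses the fact that the complement of $S_2\cup N(K)$ in a ball bounded by $Q$ is a handlebody with free fundamental group to show that the arc $\partial D'\setminus Q$ cannot traverse either triple-twisted band without backtracking; it must therefore run directly from one side to the other, and the triple twisting prevents such an arc from being isotoped into $Q$ rel endpoints. Some quantitative use of the triple twists of this kind is what your sketch is missing. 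A secondary, also unresolved, issue is your tacit assumption that an arbitrary boundary compression of a spanning surface yields another spanning surface of the knot: by the paper's own definitions only \emph{meridianal} boundary compressions are shown to do so (a general one produces two boundary curves on $\partial N(K)$), so your Euler-characteristic reduction needs that case analysis before it applies.
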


\begin{proof} That the surface is incompressible follows immediately from the fact that a compression would yield a spanning surface of genus less than 2, contradicting the fact that 2 is the minimal genus possible for this knot. To see that it is boundary-incompressible, we intersect it with a sphere $Q$ intersecting the projection plane in the circle shown in Figure \ref{Fi:boundary}.

\begin{figure}
\begin{center}
	\scalebox{.5}{\includegraphics{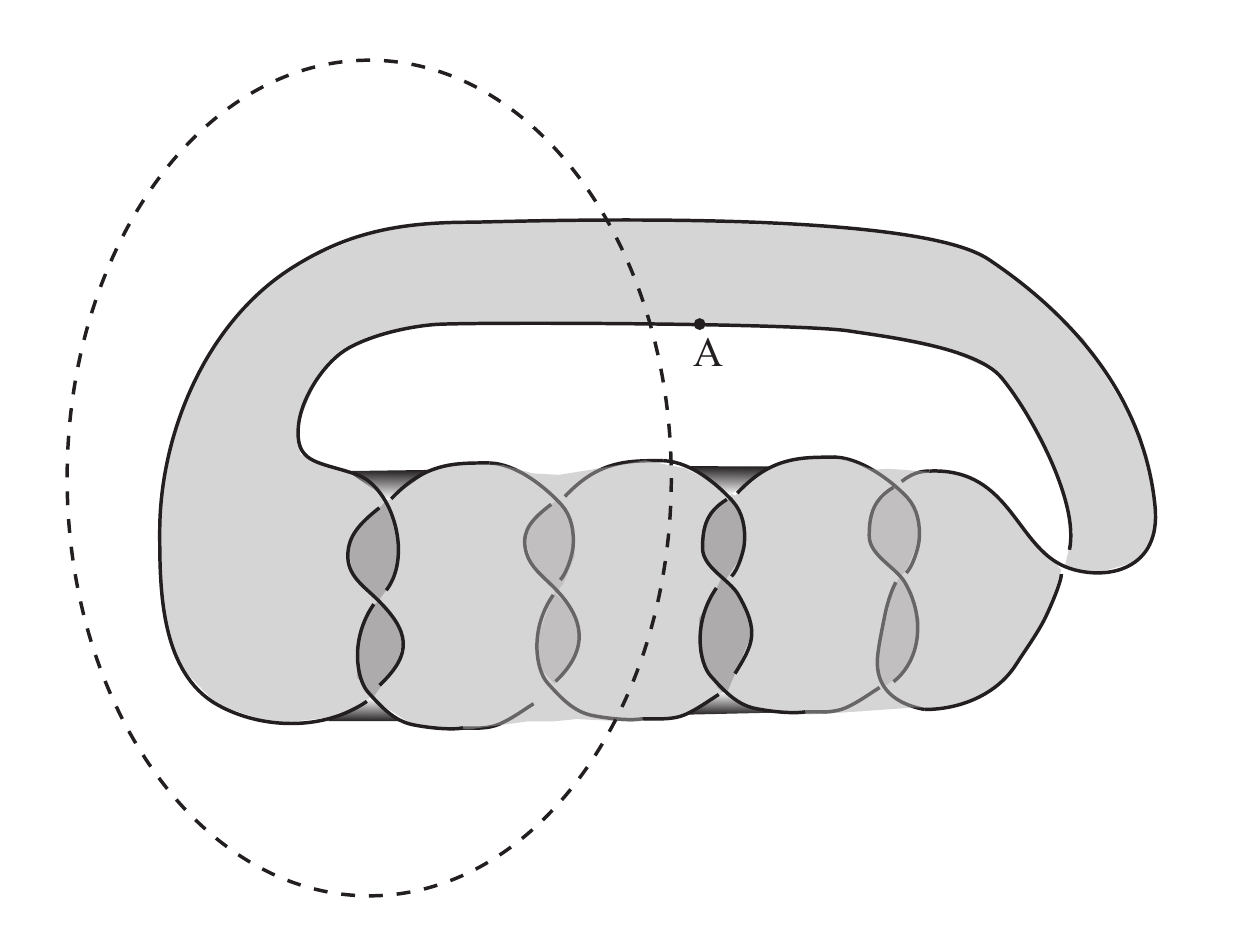}}
	\caption{Proving boundary-incompressibility of $S_2$.}\label{Fi:boundary}
\end{center}
\end{figure}  

Suppose $S_2$ is boundary compressible.  We can choose a boundary-compressing disk $D$ so that  the two points of intersection of its boundary with $\partial S_2$ occur at or very near the point $A$ in the figure and so that it intersects $Q$ in a minimal number of components. Then, an outermost intersection arc of $D \cap Q$ on $D$ cuts a disk $D'$ from $D$, the boundary of which is made up of an arc in $Q$ and an arc in $\partial N(K) \cup S_2$. This implies that to one of the sides of $Q$, the interior of this disk $D'$ lies in the interior of a handlebody $H$ obtained by removing $S_2 \cup N(K)$ from one of the  balls bounded by $Q$.  The boundary of $D'$ consists of a single arc in $Q$ and a second arc, for which there are three cases. The second arc is either entirely contained in $S_2$, entirely contained in $\partial N(K)$, or it is made up of two arcs, one in $S_2$ and one in $\partial N(K)$.  Because the fundamental group of a handlebody is a free product, in the first two cases, the arc in $\partial D' \backslash (\partial D' \cap Q)$ cannot pass over either of the two bands formed from the triple half-twists without immediately turning around and retracing its path, and it must therefore be isotopic on $S_2 \cup \partial N(K)$ to a path that passes directly from left to right, beginning and ending on $Q$. However, the two twisted bands prevent such an arc from being isotopic into $Q$ while fixing its endpoints.

In the third case, the fact we can only jump once from $S_2$ to $\partial N(K)$ as we travel around $\partial D'$ and those jumps occur in the vicinity of the point A, again implies that the triple twisting of the two bands prevents the arc $\partial D' \backslash (\partial D' \cap Q)$ from traversing one of the triple-twisted bands. Again, the arc must  must be isotopic on $S_2 \cup \partial N(K)$ to a path that passes directly from left to right, beginning and ending on $Q$,the twisted bands of which again prevent it from being isotopic into $Q$ while fixing its endpoints, thereby proving $S_2$ is boundary-incompressible.

\end{proof}

\pagebreak

\section{Appendix}

We conclude with a list of nonorientable genera of prime alternating knots through 9 crossings.

\begin{table}[h]
\begin{center} 
  \begin{tabular}{|| c | c || c | c || c |  c || c | cc ||}
\hline
Knot	    & Nonor. genus &   Knot  & Nonor. genus & Knot & Nonor. genus  & Link& Nonor. genus \\ \hline
 $3_1$  & 1/2      &                    $8_{12}$ &    2    &              $9_{19}$ &    2  & $2_1^2$ & 1/2 \\ \hline
  $4_1$ &     1     &                       $8_{13}$ &    3/2    &         $9_{20}$ &   3/ 2  & $4_1^2$ & 1/2   \\ \hline
   $5_1$  &  1/2   &              $8_{14}$ &    2    &         $9_{21}$ &    2   & $5_1^2$ & 1/2 \\ \hline
   $5_2$ &    1   &                  $8_{15}$ &    2    &         $9_{22}$ &    3/2 & $6_1^2$ & 1/2 \\ \hline
   $6_1$ &    1  &                  $8_{16}$ &    3/2    &         $9_{23}$ &    2  & $6_2^2$ &  1/2 \\ \hline 
   $6_2$ &    1   &                  $8_{17}$ &    2    &         $9_{24}$ &    2  & $6_3^2$ & 1 \\ \hline
      $6_3$ &   3/2  &            $8_{18}$ &    2    &      $9_{25}$ &    2  & $7_1^2$ &1/2 \\ \hline
   $7_1$ &   1/2  &                $9_1$ &    1/2    &      $9_{26}$ &    2  & $7_2^2$ &1  \\ \hline
     $7_2$ &    1  &                 $9_2$ &    1    &      $9_{27}$ &    2    & $7_3^2$ &1 \\ \hline
  $7_3$ &    1   &                  $9_3$ &    1    &        $9_{28}$ &    2    & $7_4^2$ &1/2 \\ \hline
  $7_4$ &    3/2   &                $9_4$ &    1    &      $9_{29}$ &    3/2   & $7_5^2$ &1 \\ \hline   
  $7_5$ &     3/2    &               $9_5$ &    3/2    &      $9_{30}$ &    2    & $7_6^2$ &1  \\ \hline
  $7_6$ &      3/2    &                 $9_6$ &    3/2    &       $9_{31}$ &    2    & $8_1^2$ &1/2 \\ \hline   
  $7_7$ &     3/2   &                  $9_7$ &    3/2    &       $9_{32}$ &    2   & $8_2^2$ &1/2 \\ \hline   
  $8_1$ &    1   &                        $9_8$ &    3/2    &        $9_{33}$ &    2    & $8_3^2$ &1 \\ \hline
  $8_2$ &     1     &                       $9_9$ &    3/2    &        $9_{34}$ &    2    & $8_4^2$ &1 \\ \hline  
  $8_3$ &     3/2    &                     $9_{10}$ &    3/2    &        $9_{35}$ &    3/2   & $8_5^2$ &1 \\ \hline  
  $8_4$ &     1    &                        $9_{11}$ &    3/2    &        $9_{36}$ &    3/2   & $8_6^2$ &1  \\ \hline
  $8_5$ &     1     &                $9_{12}$ &    3/2    &        $9_{37}$ &    2    & $8_7^2$ &1 \\ \hline
  $8_6$ &     3/2    &                 $9_{13}$ &    3/2    &        $9_{38}$ &    2   & $8_8^2$ &3/2  \\ \hline  
  $8_7$ &     3/2    &                  $9_{14}$ &    3/2    &        $9_{39}$ &    2   & $8_9^2$ &1  \\ \hline 
  $8_8$ &    3/2    &                 $9_{15}$ &    2    &        $9_{40}$ &    2   & $8_{10}^2$ &1 \\ \hline  
  $8_9$ &    3/2    &                 $9_{16}$ &    3/2    &        $9_{41}$ &    3/2   & $8^2_{11}$ &1  \\ \hline
  $8_{10}$ &     3/2    &               $9_{17}$ &    3/2    &         &       & $8^2_{12}$ &3/2 \\ \hline  
  $8_{11}$ &    3/2    &             $9_{18}$ &    2    &       &                       & $8^2_{13}$ &3/2 \\  \hline
     &   &    &  &  &  & $8^2_{14}$ &  3/2 \\  \hline

  \end{tabular}
   \caption{Nonorientable genus for prime alternating knots through nine crossings and two-component links through eight crossings.}
\end{center}
\end{table}


\begin{thebibliography}{99}

\bibitem{A}
Adams, Colin. \emph{Knot Book, The}.  American Mathematical Society, Providence RI, 1994.

\bibitem{A1}
Adams, Colin; Brock, Jeffrey; Bugbee, John; Comar, Timothy;Faigin, Keith; Huston, Amy; Joseph, Ann; Pesikoff, David. \emph{Almost Alternating Links}. Topology and its Applications 46 (1992) 151--165.

\bibitem{A2}
Adams, Colin.  \emph{Toroidally Alternating Knots and Links}. Topology, 33(1994), no.2,
 	353--369.
	
\bibitem{A3}Adams, Colin; Dorman, Ryan; Foley, Kerryann; Kravis, Jonathan; Payne, Sam. \emph{Alternating graphs}. J. Combin. Theory Ser. B 77(1999), no. 1, 96--120. 

\bibitem{A4}Adams, Colin; Fleming, Thomas; Levin, Michael; Turner, Ari M. \emph{Crossing number of alternating knots in $S\times I$}. Pacific J. Math. 203 (2002), no. 1, 1--22.

\bibitem{A5}Adams, Colin. \emph{Noncompact Fuchsian and Quasi-Fuchsian surfaces in hyperbolic 3-manifolds}. Algebr. Geom. Topol. 7 (2007), 565--582.

\bibitem{C} Hayashi, Chuichiro. \emph{Links with alternating diagrams on closed surfaces of positive genus}. Math. Proc. Cambridge Philos. Soc. 117 (1995), no. 1, 113--128.

\bibitem{CB}
Clark, Bradd Evans. \emph{Crosscaps and Knots}. Internat. J. Math. Math. Sci. 1 (1978), no. 1, 113--123. 

\bibitem{CR}
Crowell, Richard. \emph{Genus of Alternating Link Types}. Ann. of Math. (2) 69 (1959) 258--275. 

\bibitem{CT}
Curtis, Cynthia;Taylor, Samuel. \emph{The Jones polynomial and boundary slopes of alternating knots}, to appear in the Journal of Knot Theory and its Ramifications, ArXiv:0910.4912 (2009).

\bibitem{G}
Gabai, David. \emph{Genera of the Alternating Links}. Duke Math. J. 53 (1986), no. 3, 677--681. 

\bibitem{HA}
Hatcher, A.; Thurston, W. \emph{Incompressible Surfaces in $2$-Bridge Knot Complements}. Invent. Math. 79 (1985), no. 2, 225--246. 

\bibitem{HT}
Hirasawa, Mikami; Teragaito, Masakazu. \emph{Crosscap Numbers of 2-Bridge Knots}. Topology 45 (2006), no. 3, 513--530. 

\bibitem{HTW}Hoste, Jim; Thistlethwaite, Morwen; Weeks, Jeff. \emph{The first 1,701,936 knots}. Math. Intelligencer 20 (1998), no. 4, 33--48.

\bibitem{IM}
Ichihara, Kazuhiro; Mizushima, Shigeru. \emph{Crosscap Numbers of Pretzel Knots}. Topology Appl. 157 (2010), no. 1, 193--201.

\bibitem{M1}
Menasco, William \emph{Closed Incompressible Surfaces in Alternating Knot and Link Complements}. Topology 23 (1984), no. 1, 37--44. 

\bibitem{M2}
Menasco, William. \emph{Determining Incompressibility of Surfaces in Alternating Knot and Link Complements}. Pacific J. Math. 117 (1985), no. 2, 353--370. 

\bibitem{M3}
Menasco, William W.; Thistlethwaite, Morwen. \emph{The Tait Flyping Conjecture}. Bull. Amer. Math. Soc. (N.S.) 25 (1991), no. 2, 403--412. 

\bibitem{M4}
Menasco, William; Thistlethwaite, Morwen. \emph{A Geometric Proof that Alternating Knots are Nontrivial}. Math. Proc. Cambridge Philos. Soc. 109 (1991), no. 3, 425--431. 

\bibitem{M5}
Menasco, William W.; Thistlethwaite, Morwen. \emph{Surfaces with Boundary in Alternating Knot Exteriors}. J. Reine Angew. Math. 426 (1992), 47--65. 

\bibitem{M6}
Menasco, William; Thistlethwaite, Morwen. \emph{The Classification of Alternating Links}. Ann. of Math. (2) 138 (1993), no. 1, 113--171. 

\bibitem{MH}
Murakami, Hitoshi; Yasuhara, Akira. \emph{Crosscap Number of a Knot}. Pacific J. Math. 171 (1995), no. 1, 261--273. 

\bibitem{MK}
Murasugi, Kunio. \emph{On the Genus of the Alternating Knot}. I, II. J. Math. Soc. Japan 10 1958 94--105, 235--248.

\bibitem{MS} Mattman, Thomas; Sizemore, Owen. \emph{Bounds on the crosscap number of torus knots}
J. Knot Theory Ramifications 16 (2007), no. 8, 1043--1051. 

\bibitem{MT} Mizuma, Yoko; Tsutsumi, Yukihiro. \emph{
Crosscap number, ribbon number and essential tangle decompositions of knots}. 
Osaka J. Math. 45 (2008), no. 2, 391--401. 

\bibitem{O}
Ozawa, Makoto. \emph{Essential state surfaces for knots and links}. ArXiv, math.GT.0609166, Sept., 2006.

\bibitem{PPS}
Pabiniak, Milena D.; Przytycki, J\'{o}zef H.; Sazdanovi\'{c}, Radmila.
\emph{On the first group of the chromatic cohomology of graphs}.
Geom. Dedicata 140 (2009), 19--48. 

 \bibitem{RFSI} Rankin, Stuart; Flint, Ortho; Schermann, John. \emph{Enumerating the prime alternating knots. I.} J. Knot Theory Ramifications 13 (2004), no. 1, 57--100. 
 
 \bibitem{RFSII}Rankin, Stuart; Flint, Ortho; Schermann, John. \emph{Enumerating the prime alternating knots. II.} J. Knot Theory Ramifications 13 (2004), no. 1, 101--149. 

\bibitem{T}
Teragaito, Masakazu. \emph{Crosscap Numbers of Torus Knots}. Topology Appl. 138 (2004), no. 1-3, 219--238. 

\bibitem{Tsau}
Tsau, Chichen. \emph{A note on incompressible surfaces in solid tori and in lens spaces}. Knots 90 (Osaka, 1990), 213--229, de Gruyter, Berlin, 1992. 

\bibitem{Ts} Tsukamoto, Tatsuya. \emph{A criterion for almost alternating links to be non-splittable}. Math. Proc. Cambridge Philos. Soc. 137 (2004), no. 1, 109--133. 

\bibitem{Z}
Zhang, Gengyu.  
\emph{Crosscap numbers of two-component links}. 
Kyungpook Math. J. 48 (2008), no. 2, 241--254. 


\end{thebibliography}
\end{document}